\definecolor{kb}{rgb}{0.1,0.5,0.1}
\definecolor{mr}{rgb}{0.1,0.2,0.7}
\definecolor{tg}{rgb}{0.7,0.1,0.2}
\theoremstyle{plain}
 \newtheorem{thm}{Theorem}[section] 
\newtheorem{lem}[thm]{Lemma} \newtheorem{prop}[thm]{Proposition}
\newtheorem{cor}[thm]{Corollary}
 \theoremstyle{definition}
\newtheorem{definition}{Definition}
\newtheorem{exmp}{Example} \theoremstyle{remark}
\newtheorem{rem}[thm]{Remark}
\numberwithin{equation}{section}
 \newcommand{\R}{\mathbb{R}}
\newcommand{\RR}{\R} \newcommand{\Rd}{{\R^{d}}}
\newcommand{\Rdz}{{\R^{d}\setminus\{0\}}}
\newcommand{\N}{\mathbb{N}}
 \newcommand{\I}{{\rm I}}
\newcommand{\II}{{\rm II}} 
\renewcommand{\H}{\mathbb{H}} \newcommand{\ind}{{\bf 1}}
\renewcommand{\leq}{\leqslant} \renewcommand{\le}{\leq}
\renewcommand{\geq}{\geqslant} \renewcommand{\ge}{\geq}
\DeclareMathOperator{\dist}{dist}
\DeclareMathOperator{\diam}{diam}
 \def\({\left(} \def\){\right)} \def\[{\left[}
  \def\]{\right]} \def\<{\langle} \def\>{\rangle}
\newcommand{\E}{\mathbb{E}}
\newcommand{\p}{\mathbb{P}}
\newcommand{\A}{({\bf H})}
\newcommand{\As}{({\bf H^*})}
\newcommand{\WUSC}[3]{\textrm{\rm WUSC}(#1,#2,#3)}
\newcommand{\WLSC}[3]{\textrm{\rm WLSC}(#1,#2,#3)}
\newcommand{\lC}{{\underline{c}}}
\newcommand{\uC}{{\overline{C}}}
\newcommand{\la}{{\underline{\alpha}}}
\newcommand{\ua}{{\overline{\alpha}}}
\newcommand{\lt}{{\underline{\theta}}}
\newcommand{\ut}{{\overline{\theta}}}
\newcommand{\Cf}{C_{1}}
\newcommand{\Ci}{{C_2}}
\newcommand{\Cj}{{C_3}}
\newcommand{\Ck}{{C_4}}
\newcommand{\Cb}{C_5}
\newcommand{\Cc}{C_{6}}
\newcommand{\Cd}{C_{7}}
\newcommand{\Ce}{C_{8}}
\newcommand{\Ca}{C_9}
\newcommand{\Cg}{C_{10}}
\newcommand{\Ch}{C_{11}}
\newcommand{\Cl}{{C_1^*}}
\title{
Dirichlet heat kernel
for unimodal L\'evy processes
\footnotetext{\emph{2000 Mathematics Subject Classification:} Primary 60J35, 60J50; Secondary 60J75, 31B25. Key words and phrases: unimodal L\'evy process, heat kernel, smooth domain.\\
K. Bogdan was partially supported by NCN grant 2012/07/B/ST1/03356,
T. Grzywny was supported by the Alexander von Humboldt Foundation, M. Ryznar was partially supported by NCN grant 2011/03/B/ST1/00423}}
\author{K. Bogdan, T. Grzywny, M. Ryznar\thanks{corresponding author,  Institute of Mathematics and Computer Science,
  Wroc\l{}aw University of Technology,  ul.~Wyb. Wyspia\'{n}skiego
27, 50-370 Wroc\l{}aw, Poland, michal.ryznar@pwr.wroc.pl, tel. +48 71 320 3155}\\
  Institute of Mathematics and Computer Sciences\\
  Wroc\l{}aw University of Technology, Poland}
 \date{\today}
\begin{document}
\maketitle

 %60J75  Jump processes
 %60J50  Boundary theory
 %31B25  Boundary behavior (in Potential theory)
 %60J35	Transition functions, generators and resolvents
\begin{abstract}
We
estimate
the
heat kernel of
the
smooth open set for
the
isotropic unimodal pure-jump L\'evy process with infinite L\'evy measure and weakly scaling L\'evy-Kchintchine exponent.
\end{abstract}

\section{Introduction and preliminaries}
\subsection{Motivation
}\label{sec:mot}

Heat kernels provide direct access to properties of operators with
Dirichlet conditions.
For instance the Green function and the harmonic measure
are expressed by the kernel, cf. \eqref{eq:Gbyp}, \eqref{Ikeda-Watanabe3} below.
We shall estimate the heat kernels of open sets $D\subset \Rd$ with $C^{1,1}$ smoothness of the boundary and nonlocal
translation-invariant integro-differential operators
satisfying the maximum principle and  certain unimodality and scaling conditions.
Such operators
are commonly used to model nonlocal phenomena \cite{ MR2373320, MR2680400,  MR2583992, MR3034006,MR3116931}.
Put differently, we shall study the transition density $p_D(t,x,y)$
of jump-type unimodal L\'evy processes $X$ killed upon leaving $D$ under scaling conditions at infinity for the L\'evy-Kchintchine exponent of $X$.

We recall that precise estimates for the heat
kernel of the Laplacian (and the Brownian motion)
were given for $C^{1,1}$ domains in 2002 by Zhang
\cite{MR1900329}.
In 2006 Siudeja \cite{MR2255353} gave upper bounds for the heat
kernel of the fractional Laplacian (and the isotropic stable L\'evy process) in convex sets.
In 2010 Chen, Kim and Song \cite{MR2677618} gave sharp (two-sided) explicit estimates for the heat kernel  of the fractional Laplacian in bounded $C^{1,1}$ open sets.
Gradual extensions
were then obtained
for generators of
many
subordinate Brownian motions
satisfying scaling conditions \cite{MR2677618, MR2923420, MR2981852, 2013arXiv1303.6449C}, and   for processes with comparable L\'evy measure \cite{2012arXiv1212.3092K}.
We note that subordinate Brownian motions form a proper subset of unimodal L\'evy processes;
in this work we present a
synthetic approach to sharp estimates of $p_D(t,x,y)$ for $C^{1,1}$ open sets $D$
and general unimodal L\'evy processes with scaling.

Rather precise but less explicit bounds of $p_D(t,x,y)$ are also known to hold for Lipschitz sets in a number of situations.
Such bounds were first obtained
for  the Laplacian in 2003 by
Varopoulos
\cite{MR1969798}.
In 2010 the present authors proved
that the following factorization,
\begin{equation}\label{eq:ehks}
    {p_{D}(t, x, y)}\approx P^{x}(\tau_{D}>t)
    P^{y}(\tau_{D}>t)p(t,x,y),
  \end{equation}
holds
for the fractional Laplacian
under a geometric condition  on $x,y\in D$ and $t>0$ for every open $D\subset \Rd$ \cite[Theorem~2]{MR2722789}, see also \cite{MR2602155, MR2722789}.
Here $P^{y}(\tau_{D}>t)$ is the survival probability of the corresponding (isotropic stable L\'evy) process $X$, see \eqref{eq:dsp}, and $p(t,x,y)=p_\Rd(t,x,y)$ is the (free) heat kernel
for $D=\Rd$.
Needless to say, the Dirichlet condition prescribed on $D^c$ for the functions in the domains of  the generator reflects the killing of $X$ when the process first leaves $D$. This accounts for the role played in the study by the first exit time $\tau_D$ of $X$ from $D$.
The comparison \eqref{eq:ehks} is uniform in time and space
for cones, homogeneous Lipschitz domains and exterior $C^{1,1}$ sets, cf.  \cite{MR2722789}, \cite{MR2776619}.
For these sets,
\eqref{eq:ehks} is made rather explicit by approximating the survival probability with superharmonic
functions of $X$ \cite{MR2722789}.

The above  Lipschitz setting of  \cite{MR2722789}, namely the approximate factorization of the heat kernel
and the estimates of the survival probability, are closely
related to the so-called boundary Harnack inequality.
The setting
offers a structured
approach to heat kernel estimates of nonlocal operators.
It is
also relevant
in the Markovian context of
\cite{2013arXiv1303.6449C},  where \eqref{eq:ehks}  serves
as an intermediate step leading  to explicit estimates for $C^{1,1}$ sets.
We therefore owe the reader
an explanation why we postpone the
setting
here and instead use an approach which is tailor-made for $C^{1,1}$ sets. The main reason is better economy and clarity of the presentation when the boundary Harnack principle is replaced by explicit estimates of superharmonic functions, and these are now provided by the preparatory work \cite{BGR2}.
The second main reason is
that the boundary Harnack inequality puts additional constraints on the process $X$,
and these may be circumvented in
the present development.
For instance, the so-called truncated stable L\'evy process is manageable by our approach but cannot be resolved by previous methods because the boundary Harnack inequality fails in this case,
see Example~\ref{ex:tru} in Section~\ref{sec:examples}.

To bound
the heat kernel  $p_D(t,x,y)$ of the unimodal L\'evy process
$X$ and
the $C^{1,1}$ set $D$
we use the estimates of the free transition density $p(t,x,y)$ from
\cite{2014-KB-TG-MR-jfa}
and the estimates 
of superharmonic functions of $X$ at the boundary of $D$ from \cite{BGR2}.
For bounded, exterior, and halfspace-like $C^{1,1}$ open sets
we obtain explicit approximate factorizations of $p_D(t,x,y)$ similar to \eqref{eq:ehks},
along with bounds for survival probability.
The results are given in Theorem~\ref{hkC11_gen}, Theorem~\ref{exterior_lower}  and Theorem~\ref{halfspace-like} below.
Our estimates are {\it sharp}, meaning that the ratio of the upper bound and the lower bound is less than a constant, and they are {\it global}, that is hold with a uniform constant for all $t>0$ and $x,y\in \Rd$.
We focus on the transient case, leaving open some cases of recurrent unimodal L\'evy processes on unbounded subsets of the real line (see \cite{MR2722789} for a comprehensive study of the isotropic stable L\'evy processes, including the recurrent case).

Recall that an {\it exterior} set is the complement of a {\it bounded} set, and a {\it halfspace-like} set is one included between two translates of a halfspace.
We thus cover bounded and some unbounded $C^{1,1}$ sets.
Unbounded sets are especially challenging:
the $C^{1,1}$
condition does not specify their geometry at infinity, whereas
the geometry strongly influences the asymptotics of the heat kernel.
We note that the exterior  $C^{1,1}$ sets
and the halfspace-like sets
were studied
for the fractional Laplacian  in \cite{MR2722789} and \cite{MR2776619}.
The case of the subordinate Brownian motions with global scalings is resolved in \cite{2012arXiv1212.3092K} for the halfspace, and \cite{MR2602155, MR2722789} handle the fractional Laplacian in cones.
Our present
estimates for the heat kernel of exterior sets
in Theorem~\ref{exterior_lower}
are new even for the sum of two independent isotropic stable L\'evy processes. Noteworthy, the comparability constants in the estimates do not change upon dilation of $D$ if the scalings of the L\'evy-Kchintchine exponent of $X$ are global, which is an added bonus of our approach. This is so for the ball and for general exterior open sets, see
Corollary~\ref{hk_kula2} and Corollary~\ref{Exterior_Approx}.
In general we strive to control comparability constants because they may be important in scaling arguments and applications to more general Markov processes.
In passing we also refer the reader to \cite{MR2807275} for
heat kernel estimates of unbounded domains for second-order elliptic differential operators.

Our estimates are generally expressed in terms of $V$,
 the renewal function of the ladder-height process of one-dimensional projections of $X$,
but they could equivalently be  expressed in terms of the more familiar L\'evy-Kchintchine exponent
$\psi$ of $X$, see \eqref{cVh1pg}.
Accordingly, we observe a wide range of power-like asymptotics of heat kernels.
The derivative of $V$ is the {\it  \'eminence grise} of the present project, see also \cite{BGR2}. It is quite delicate to control
$V'$,
but under a mild Harnack-type condition $\A$,
$V'$ only influences the comparability constants, not the {\it structure} of the estimates,
thus allowing for the present generality of  results.

Here is a summary of our main estimates.
We denote by $\delta_D(x)$ the distance of $x\in \Rd$ to $D^c$.
The following comparisons are meant to hold for all $x,y\in \Rd$ and $t>0$, i.e. globally:

If
the L\'evy-Kchintchine exponent $\psi$ of the unimodal L\'evy process $X$ has
lower and upper scalings and $D$ is a {\it bounded} $C^{1,1}$ open set, then
\begin{eqnarray*}
p_{D}(t,x,y)
	&\approx&  \p^x(\tau_D>t/2)\p^y(\tau_D>t/2)p(t\wedge t_0,x,y),
\end{eqnarray*}
and
\begin{equation*}  \p^x\(\tau_{D}>t\)\approx  e^{-\lambda_1 t} \(\frac{V(\delta_D(x))}{\sqrt{t\wedge t_0}}\wedge 1\),
\end{equation*}
where $t_0=V^2(r_0)$, $r_0>0$ is sufficiently small and $-\lambda_1$ is the principal Dirichlet eigenvalue for $D$ and the generator of the semigroup of $X$.
The result is proved in Theorem~\ref{hkC11_gen}.

If
$\psi$  has
{\it global}
lower and upper scalings and $D$ is a $C^{1,1}$ {\it halfspace-like} open set, then
$$ p_{D}(t, x, y)
    \approx \p^x(\tau_D>t
)\p^y(\tau_D>t
)p(t,x,y),$$
and
$$\p^x(\tau_D>t)\approx\frac{V(\delta_D(x))}{\sqrt{t}}\wedge 1.$$
The estimates are proved in Theorem~\ref{halfspace-like}.
The same approximate factorization of $p_D$ holds
under global lower and upper scalings of $\psi$ if $D$ is an {\it exterior} $C^{1,1}$ open set in dimension $d\ge 2$, too, except that in that case we have
    $$  \p^x(\tau_D>t)\approx  \frac{V(\delta_D(x))}{\sqrt{t}\wedge 1}\wedge 1.$$
The result is given in Theorem~\ref{exterior_lower},  Proposition~\ref{Exit_time100} and Corollary~\ref{Exterior_Approx}.
In particular we have  $\p^x(\tau_D>t)\approx \p^x(\tau_D>t/2)$ in the above two cases of unbounded $D$, hence the approximate factorizations in all the three cases above may be considered identical 
in bounded time.
In fact, Theorem~\ref{heatKernelCompl}, Remark~\ref{heatKernelCompl_2balls} and Theorem~\ref{lower_hk_estimate}  below give estimates which essentially resolve the asymptotics of the heat kernels in bounded time and space for every $C^{1,1}$ open set $D$, regardless of the geometry of $D$ at infinity, and they are at the heart of our development.

We note that estimates for the Green function can in principle be obtained by integrating the estimates of the heat kernel against time, cf. \eqref{eq:Gbyp} below and \cite{MR2677618, 2012arXiv1212.3092K}.

Here are comments on possible directions of further research:
Other specific unbounded $C^{1,1}$ open sets, e.g. the parabola-shaped domains \cite{MR2163593} deserve some attention, as they may shed light on the generality of approximate factorizations of  heat kernels.
By a theorem of Courr\`ege, if smooth compactly supported functions are in the domain of the generator of a Markovian semigroup on $\Rd$, then the generator is of L\'evy type
\cite{1998-WHoh-habilitation}.
Therefore one should expect similar estimates of superharmonic functions and heat kernels of L\'evy and Markov processes under two-sided unimodal bounds for the intensity of jumps, cf.
\cite{2013arXiv1303.6449C, 2013arXiv1308.0310K}.
In Remark~\ref{rem:gLc} at the end of the paper
we give more details in the case of L\'evy processes which are isotropic and {\it almost} unimodal.
Lastly, rather optimal isotropic upper bounds of $p(t,x,y)$ for
a class of strongly {\it anisotropic} L\'evy-type operators
were given in \cite{MR2794975}. In the anisotropic setting there is little hope for explicit (two-sided) sharp bounds for $p(t,x,y)$, hence for $p_D(t,x,y)$, but integrable isotropic upper bounds for $p(t,x,y)$ and upper bounds for $p_D(t,x,y)$ at the boundary of $D$ would be of much interest.

The paper is composed as follows. In Section~\ref{sec:prel} we recall the sharp estimates of the free heat kernel from \cite{2014-KB-TG-MR-jfa}. In Section~\ref{sec:DC} we present a general framework for estimating heat kernels of jump  processes and we recall
the estimates of  \cite{BGR2} for the first exit time of unimodal L\'evy processes
from $C^{1,1}$ sets.
The upper bounds for $p_D(t,x,y)$ are given in Section~\ref{sec:UpperBound} and
the lower bounds are given in Section~\ref{sec:LowerBound}. In particular we
propose
techniques based on
structure inequalities \eqref{eq:se} and \eqref{A}, which make our proofs shorter even in comparison with the case of the isotropic stable L\'evy process. We also obtain
a number of auxiliary bounds, which may be interesting on their own.
Our estimates are generally uniform
in bounded time and space,
and if global scaling conditions are satisfied or the set is bounded, then 
the
estimates 
are uniform in the whole of time and space. In Section~\ref{sec:uni} we complement the results of
Section~\ref{sec:UpperBound} and Section~\ref{sec:LowerBound} with some
spectral theory to obtain for bounded $C^{1,1}$ sets sharp heat kernel estimates which are global in time and space.
Since they 
are obtained
rather easily, we invest further attention in unbounded sets, the exterior sets and the halfspace-like sets.
Thus, Section~\ref{sec:ud} focuses on processes with global scaling in unbounded sets, and shows best the strengths of our approach.
In Section~\ref{sec:examples} we discuss specific examples of unimodal L\'evy processes, which can be resolved by our methods. We encourage the reader to inspect the examples when following the
general theory.

\subsection{Estimates for the free process}\label{sec:prel}

Below in the paper we consider the Euclidean space $\R^d$ of arbitrary dimension $d\in \N$.
All the considered sets, functions and measures are tacitly  assumed to be Borel.

We write $f(x)\approx g(x)$ and say $f$ and $g$ are comparable if $f, g\ge 0$ and there is a positive number $C\ge 1$, called {\it comparability constant},
such that $C^{-1}f(x)\le g(x)\le C f(x)$ for all $x$.
We write $C=C(a,\ldots,z)$ to indicate that $C$
may be so chosen to depend only on $a,\ldots,z$. Later on in Remark~\ref{ScAgree} we also make a specific convention regarding the dependence of the constants on $\psi$.
Enumerated
capitalized constants
$C_1$, $\Cj$, \ldots, are meant to be fixed throughout the paper.
Our main motivations for such book-keeping 
is to facilitate scaling arguments and applications to more general Markov processes having variable L\'evy characteristics of a given type.

A
(Borel) measure
on $\Rd$ is called isotropic unimodal, in short: unimodal, if
on $\Rd\setminus \{0\}$ it is absolutely continuous with respect to the Lebesgue measure
and has a (finite) {\it radial nonincreasing} density function. Such measures may have an
atom at the origin.
A L\'evy process $X=(X_t, \,t\ge 0)$ \cite{MR1739520}, is called isotropic unimodal, in short: unimodal, if all of its one-dimensional distributions
$p_t(dx)$ are unimodal.
We will consider jump-type precesses $X$.
To actually define $X$,
recall that L\'evy measure is
any
measure
concentrated on $\Rdz$ such that
\begin{equation*}\label{wml}
\int_\Rd \(|x|^2\wedge 1\)\nu(dx)<\infty.
\end{equation*}
Unimodal pure-jump L\'evy processes are characterized in \cite{MR705619}
by
unimodal L\'evy measures
$\nu(dx)=\nu(x)dx=\nu(|x|)dx$.
After fixing  $\nu$ we denote
$$
 \psi(\xi)=\int_\Rd \left(1- \cos \left<\xi,x\right>\right) \nu(dx),\qquad\xi\in\Rd.$$
Unless explicitly stated otherwise, in what follows we
assume that $\nu$ is an {\it infinite} unimodal L\'evy measure, and
$X$ is the (pure-jump unimodal)
L\'{e}vy process in $\Rd$ given by
$$
\E\,e^{i\left<\xi, X_t\right>}=\int_\Rd e^{i\left<\xi,x\right>}p_t(dx)=e^{-t\psi(\xi)}.
$$
The L\'evy-Kchintchine exponent $\psi$   of $X$ is then unbounded.
Since
$\psi$ is a radial function,  we shall write
$\psi(u)=\psi(x)$, if $u=|x|\ge 0$ and  $x\in \Rd$. Without much notice  the same convention applies to all radial functions.
The L\'evy process $X_t^{(1)}$, i.e. the first coordinate of $X_t$,
has the same function $\psi(u)$. Clearly, $\psi(0)=0$ and $\psi(u)>0$ for $u>0$.
We also note that for $t>0$,
$p_t(dx)$ has no atom at $0$. This is equivalent to infiniteness od $\nu$
\cite[Theorem 30.10]{MR1739520}.
In fact, for $t>0$,
$p_t$ has
density function $p_t(x)$
continuous on $\Rdz$
\cite[Lemma 2.5]{TKMR}.
Furthermore, if
the following Hartman-Wintner condition holds,
\begin{align}\label{eqHW}
&\lim_{|\xi|\to \infty}\psi(\xi)/\ln |\xi|=\infty,
\end{align}
then by Fourier inversion
for each $t>0$, $p_t(dx)$ has smooth density function $p_t(x)$  with integrable derivatives of all orders on $\Rd$ \cite[Lemma~3.1]{MR3010850}. In fact, unimodality yields the following characterization.
\begin{lem}\label{HW}
The density function $p_t(x)$ is bounded for every $t>0$ if and only if \eqref{eqHW} holds.
\end{lem}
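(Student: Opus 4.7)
The plan is to prove the two implications separately; the forward implication $\eqref{eqHW}\Rightarrow p_t\in L^\infty$ is essentially already indicated in the paper, while the reverse implication will require most of the work.

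For the forward direction, given $t>0$, I choose $c>d/t$; by \eqref{eqHW} we have $\psi(\xi)\ge c\ln|\xi|$ for all sufficiently large $|\xi|$, so $e^{-t\psi(\xi)}\le|\xi|^{-tc}$ is integrable at infinity. Together with the trivial bound $e^{-t\psi}\le 1$ on bounded sets, this places $e^{-t\psi}$ in $L^1(\Rd)$, and Fourier inversion produces the bounded (continuous, in fact smooth) density
\[
p_t(x)=(2\pi)^{-d}\int_\Rd e^{-i\sp{x}{\xi}}e^{-t\psi(\xi)}\,d\xi.
\]

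For the reverse direction, I would first upgrade the $L^\infty$ assumption to an $L^2$ bound on the Fourier transform. Since $p_t$ is a probability density, $p_t\in L^1\cap L^\infty\subseteq L^2(\Rd)$ (with $\|p_t\|_2^2\le\|p_t\|_\infty$), and Plancherel combined with $\widehat{p_t}(\xi)=e^{-t\psi(\xi)}$ yields $\int_\Rd e^{-2t\psi(\xi)}\,d\xi<\infty$ for every $t>0$, hence $\int_\Rd e^{-s\psi(\xi)}\,d\xi<\infty$ for every $s>0$. Next I would assume for contradiction that \eqref{eqHW} fails, producing $M<\infty$ and $|\xi_n|\to\infty$ with $\psi(\xi_n)\le M\ln|\xi_n|$. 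Radiality of $\psi$ lets me view $\psi$ as a function of the radius, and I would invoke the standard Pruitt-type estimate $\psi^*(r):=\sup_{|\eta|\le r}\psi(\eta)\le C_0\psi(r)$ for isotropic L\'evy--Khintchine exponents (see e.g.\ \cite{2014-KB-TG-MR-jfa}) to propagate the pointwise bound to an interval: since $\psi^*$ is non-decreasing,
\[
\psi(r)\le\psi^*(r)\le\psi^*(|\xi_n|)\le C_0M\ln|\xi_n|\le C_0M(\ln 2+\ln r),\qquad r\in[|\xi_n|/2,\,|\xi_n|].
\]
Polar coordinates then give
\[
\int_\Rd e^{-t\psi(\xi)}\,d\xi\ge\omega_{d-1}\int_{|\xi_n|/2}^{|\xi_n|}e^{-t\psi(r)}r^{d-1}\,dr\ge c_d\,|\xi_n|^{d-tC_0M},
\]
which tends to infinity with $n$ as soon as $t<d/(C_0M)$, contradicting the integrability just established.

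The main obstacle will be this propagation step: the failure of \eqref{eqHW} only provides information at a possibly sparse sequence of points, while the contradiction requires $\psi$ to be logarithmically bounded on a set of $d$-dimensional volume comparable to a power of $|\xi_n|$. Radiality expands a single point to an entire sphere, but only the monotonicity inherited from $\psi^*$ converts this sphere into a thick annular shell of power-type volume. Were the comparability $\psi\asymp\psi^*$ unavailable, one could still fall back on the elementary subadditivity $\psi(\xi+\eta)\le 2\psi(\xi)+2\psi(\eta)$ to control $\psi$ on a fixed-width shell around the sphere of radius $|\xi_n|$, which suffices for $d\ge 2$ but becomes more delicate in dimension one.
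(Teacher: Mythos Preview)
Your proof is correct. The paper's own argument consists only of citing \cite[Proposition~2]{2014-KB-TG-MR-jfa} (the comparability $\psi^*\le C_0\,\psi$ for unimodal exponents, exactly the ingredient you use) together with \cite[Proposition~4.1]{MR3010850}, which supplies the characterization of bounded transition density in terms of the Hartman--Wintner condition for $\psi^*$. You have effectively re-derived the relevant half of that Knopova--Schilling result via the Plancherel route $p_t\in L^\infty\cap L^1\Rightarrow e^{-t\psi}\in L^2\Rightarrow e^{-2t\psi}\in L^1$, followed by the annular integral estimate; this is self-contained and entirely in the same spirit, since both arguments hinge on the same comparability lemma. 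One cosmetic remark: your closing paragraph about falling back on subadditivity when $\psi\not\asymp\psi^*$ is superfluous here, as the paper works exclusively with unimodal processes, for which the comparability is guaranteed by the very reference you invoke.
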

\begin{proof}
The
necessity of \eqref{eqHW} follows from
\cite[Proposition~2]{2014-KB-TG-MR-jfa} and \cite[Proposition~4.1]{MR3010850}.
\end{proof}

For $r>0$ we
define Pruitt's function \cite{1981P},
\begin{align}
h(r)&=\int\limits_{\Rd}\left(\frac{|z|^2}{r^2}\wedge 1\right)\nu(dz).\label{def:GKh2}
\end{align}
Note that $0< h(r)<\infty$ and
$h$ is  decreasing.

We also consider the renewal function $V$
   of the (properly normalized) ascending ladder-height process of $X_t^{(1)}$.
The ladder-height process is a subordinator with the Laplace exponent
\begin{equation*}\label{kappa}
 \kappa(\xi)=
\exp\left\{\frac{1}{\pi} \int_0^\infty \frac{ \log {\psi}(\xi\zeta)}{1 + \zeta^2} \, d\zeta\right\}, \quad \xi\ge 0,
\end{equation*}
and $V(x)$ is its potential measure of the half-line $(-\infty,x)$.
Silverstein studied $V$ and $V'$ as $g$ and $\psi$ in \cite[(1.8) and Theorem~2]{MR573292}.
The  Laplace transform of $V$ is
\begin{equation}\label{eq:tLV}
\int_0^\infty V(x)e^{-\xi x}dx=\frac{1}{\xi\kappa(\xi)}, \qquad \xi>0.
\end{equation}
For instance,
$V(x)= x^{\alpha/2}$ for $x\ge 0$, if $\psi(\xi)= |\xi|^\alpha$ \cite[Example~3.7]{MR2453779}.
The definition of $V$ is rather implicit and properties of $V$ are delicate. In particular
 the decay properties of $V'$ are not yet fully understood.
For a detailed discussion of $V$ we refer the reader to
\cite{BGR2} and \cite{MR573292}.
We have $V(x)=0$ for $x\le 0$ and $V(\infty):=\lim_{r\to \infty}V(r)=\infty$. Also, $V$ is
subadditive:
\begin{equation}\label{subad}
 V(x+y)\le V(x)+V(y), \quad x,y \in \R.
\end{equation}
It is known that $V$ is absolutely continuous and harmonic on $(0,\infty)$ for $X_t^1$. Also
$V^\prime$
is a
positive harmonic function for
$X_t^1$ on $(0,\infty)$, hence $V$ is actually (strictly) increasing.
For the so-called complete subordinate Brownian motions \cite{MR2978140} $V'$ is monotone, in fact completely monotone, cf. \cite[Lemma~7.5]{BGR2}. This property was crucial for the development in \cite{MR2923420,2012arXiv1212.3092K}, but in general it fails in the present setting cf. \cite[Remark~9]{BGR2}.

We shall use $V$ and its inverse function $V^{-1}$ in the estimates of heat kernels. In fact, $V$ and $\psi$ may be used interchangeably because of the following lemma.
\begin{lem}\label{ch1V}
The constants in the following comparisons depend only on the dimension,
\begin{equation}\label{cVh1pg}
h(r)\approx \left[V(r)\right]^{-2}
\approx \psi(1/r)
,\qquad r>0.
\end{equation}
\end{lem}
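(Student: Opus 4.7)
The lemma is a three-way comparison, so I would split it into the two comparisons $\psi(1/r)\approx h(r)$ and $V(r)^{-2}\approx \psi(1/r)$, each with constants depending only on $d$.

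For $\psi(1/r)\approx h(r)$ I would follow Pruitt \cite{1981P}. The upper bound is immediate from $1-\cos t\le 2(t^2\wedge 1)$: for any $\xi$ with $|\xi|=1/r$,
$$\psi(1/r)=\int_{\Rd}(1-\cos\langle\xi,x\rangle)\nu(dx)\le 2\int_{\Rd}\left(\frac{|x|^2}{r^2}\wedge 1\right)\nu(dx)=2h(r).$$
For the matching lower bound I would exploit isotropy of $\nu$: since $\nu$ and hence $\psi$ are radial, averaging the integrand over $\xi\in\sphere^{d-1}$ is free, and
$$\int_{\sphere^{d-1}}(1-\cos\langle\xi,x\rangle)\,d\sigma(\xi)\ge c_d\left(\frac{|x|^2}{r^2}\wedge 1\right),\qquad |\xi|=1/r,$$
by a Taylor expansion at small arguments (using $\int\langle\xi,\hat x\rangle^2 d\sigma=1/d$) and by oscillation of the cosine at large arguments. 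This yields $\psi(1/r)\ge c_d h(r)$.

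For $V(r)^{-2}\approx \psi(1/r)$ I would combine two standard facts. First, since $X^{(1)}$ is a symmetric L\'evy process on $\R$ with exponent $\psi$, Fristedt's formula---which is precisely the integral representation of $\kappa$ quoted before the lemma evaluated in the symmetric case---gives the Wiener--Hopf identity $\kappa(\xi)^2=\psi(\xi)$; this is consistent with the stable benchmark $\psi(\xi)=\xi^{\alpha}$, $\kappa(\xi)=\xi^{\alpha/2}$. Second, I would use the Laplace transform identity \eqref{eq:tLV} together with monotonicity and the subadditivity \eqref{subad} of $V$ to obtain $V(r)\approx 1/\kappa(1/r)$ with universal constants: the upper bound $V(1/\xi)\le e/\kappa(\xi)$ follows from $\int_{1/\xi}^\infty V(x)e^{-\xi x}dx\ge V(1/\xi)e^{-1}/\xi$, while subadditivity yields $V(x)\le V(1/\xi)(1+\xi x)$, hence $\int_0^\infty V(x)e^{-\xi x}dx\le 2V(1/\xi)/\xi$ and $V(1/\xi)\ge 1/(2\kappa(\xi))$. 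Combining, $V(r)^{-2}\approx \kappa(1/r)^2=\psi(1/r)$.

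The main obstacle is to keep the constants purely dimensional. Pruitt's lower bound is shape-sensitive in general, and only isotropy extracts a constant depending on $d$ alone via the spherical averaging trick. For the $V$--$\kappa$ step, the universal-constant bound hinges on the subadditivity \eqref{subad}, and the algebraic identity $\kappa^{2}=\psi$ is a feature specific to the symmetric one-dimensional setting. Both comparisons are in fact already recorded in the references: \cite[Proposition~2]{2014-KB-TG-MR-jfa} for the first and \cite{BGR2} for the second, so in practice the lemma can be established in a single sentence by chaining those references.
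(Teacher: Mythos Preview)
Your closing paragraph is exactly what the paper does: it simply cites \cite[Proposition~2]{2014-KB-TG-MR-jfa} and \cite[Proof of Proposition~2.4]{BGR2}. Your sketch of $\psi(1/r)\approx h(r)$ via spherical averaging is also correct and is essentially the argument behind the first reference.

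The gap is in your detailed argument for $V(r)^{-2}\approx\psi(1/r)$. The exact identity $\kappa(\xi)^2=\psi(\xi)$ that you invoke is \emph{false} for general symmetric L\'evy processes. What the Wiener--Hopf factorisation actually yields in the symmetric case is $\psi(\xi)=|\kappa(-i\xi)|^2$, with the Laplace exponent $\kappa$ analytically continued to the imaginary axis; for real $\xi>0$ this is not $\kappa(\xi)^2$. Your consistency check with $\kappa(\lambda)=\lambda^{\alpha/2}$ passes only because pure powers make the two expressions coincide. Equivalently, in terms of the integral representation quoted before the lemma, the identity $\kappa^2=\psi$ would require
\[
\frac{2}{\pi}\int_0^\infty \frac{\log\bigl(\psi(\xi\zeta)/\psi(\xi)\bigr)}{1+\zeta^2}\,d\zeta=0,
\]
which holds when $\log\psi$ is linear in $\log\xi$ but fails already for $\psi(\xi)=\xi^{\alpha_1}+\xi^{\alpha_2}$.

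Your Tauberian step $V(1/\xi)\approx 1/\kappa(\xi)$ via \eqref{eq:tLV}, monotonicity and subadditivity is correct. What is missing is an honest proof that $\kappa(\xi)^2\approx\psi(\xi)$ with absolute constants; this is true but needs a separate argument (for instance via $\psi^*$ and the doubling-type bounds available for L\'evy--Khintchine exponents), and is one of the things supplied by \cite[Proof of Proposition~2.4]{BGR2}.
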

\begin{proof}
The constant in the first comparison depends only on $d$
and the second comparison is absolute; see
\cite[Proposition~2]{2014-KB-TG-MR-jfa} and \cite[Proof of Proposition 2.4]{BGR2}.
\end{proof}
\begin{lem}\label{upper_den} There is a constant $\Cf=\Cf(d)$ such that
\begin{equation}\label{B}p_t(x)\le {\Cf}\frac{t}{ |x|^dV^2(|x|)},\qquad t>0,\; x\in\Rdz.\end{equation}
\end{lem}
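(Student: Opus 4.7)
The plan is to combine the radial monotonicity of $p_t$, which follows from unimodality of $X$, with the classical tail bound of Pruitt \cite{1981P}, and then to convert $h$ into $V^{-2}$ via Lemma~\ref{ch1V}.

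First I would fix $x\in\Rdz$ and set $r=|x|/2$. Since $p_t(\cdot)$ is radial and nonincreasing, we have $p_t(y)\ge p_t(x)$ for every $y$ in the spherical shell $A:=\{y\in\Rd:r\le |y|\le 2r\}$. Integrating this over $A$ and using $A\subset\{y:|y|>r\}$, I would obtain
$$p_t(x)\,|A|\le\int_A p_t(y)\,dy\le \p^0\bigl(|X_t|>r\bigr).$$
Since $|A|=c_d\,r^d\approx |x|^d$, this reduces the lemma to a tail bound for $|X_t|$.

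Next I would invoke Pruitt's inequality from \cite{1981P}, which asserts the existence of a constant depending only on $d$ such that $\p^0(|X_t|>r)\le C\,t\,h(r)$ for all $r,t>0$. Combining the two displays gives
$$p_t(x)\le \frac{C\,t\,h(|x|/2)}{|x|^d}.$$
A direct inspection of the definition \eqref{def:GKh2} yields the elementary doubling bound $h(r/2)\le 4\,h(r)$, so the right-hand side is comparable to $t\,h(|x|)/|x|^d$. Finally, Lemma~\ref{ch1V} gives $h(|x|)\approx V(|x|)^{-2}$, producing \eqref{B} with a constant depending only on $d$.

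The only nontrivial input is Pruitt's tail estimate, so I expect that to be the sole technical step; everything else is elementary manipulation. It is plausible that the authors will simply appeal to \cite{2014-KB-TG-MR-jfa}, where an analogous bound is obtained en route to sharper two-sided asymptotics of $p_t$, rather than reproducing the short argument above.
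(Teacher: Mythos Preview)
Your argument is correct, and your closing prediction is exactly right: the paper does not reproduce any argument but simply invokes \cite[Corollary~7 and Proposition~2]{2014-KB-TG-MR-jfa} to obtain $p_t(x)\le C\,t\,\psi(1/|x|)/|x|^d$, and then applies Lemma~\ref{ch1V} to pass from $\psi(1/|x|)$ to $V(|x|)^{-2}$.

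Your route is a self-contained unpacking of what lies behind that citation. The annulus-plus-Pruitt argument you give is essentially the mechanism that produces the bound in \cite{2014-KB-TG-MR-jfa}: unimodality converts a pointwise density bound into a tail probability, and Pruitt's inequality $\p^0(|X_t|>r)\le C\,t\,h(r)$ controls the tail by $h$, which is then exchanged for $\psi$ or $V^{-2}$ via Lemma~\ref{ch1V}. So the two proofs share the same skeleton; the paper merely outsources it. What your version buys is independence from the companion paper and an explicit exhibition of the dimensional constant (coming from the volume of the shell and the factor~$4$ in the doubling of $h$); what the paper's version buys is brevity.
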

\begin{proof}
By \cite[Corollary 7 and Proposition~2]{2014-KB-TG-MR-jfa}, there is
$C=C(d)$ such that
$$p_t(x)\leq C\, \frac{t\,\psi(1/|x|)}{|x|^{d}},\qquad t>0,\; x\in\Rdz.$$
Replacing $\psi(1/|x|)$ with $1/V^2(|x|)$ and using Lemma~\ref{ch1V}, we get the present statement.
\end{proof}
Clearly then, we also have  $\nu(x)\leq \Cf|x|^{-d}V(|x|)^{-2}$, $x\in\Rdz.$

It is rather natural to assume (relative) power-type asymptotics
at infinity for the characteristic exponent $\psi$ of $X$.
To this end we consider
 $\psi$ as a function on $(0,\infty)$.  Let
$\lt\in [0,\infty)$.
We say that
$\psi$ satisfies {the} weak lower scaling condition
at infinity (WLSC) if there are numbers
$\la>0
$
and  $\lC\in(0,1]$,  such that
\begin{equation*}\label{eq:LSC2}
 \psi(\lambda\theta)\ge
\lC\lambda^{\,\la} \psi(\theta)\quad \mbox{for}\quad \lambda\ge 1, \quad
\theta>\lt.
\end{equation*}
In short we write $\psi\in\WLSC{\la}{ \lt}{\lC}$ or $\psi\in {\rm WLSC}$.
If $\psi\in\WLSC{\la}{0}{\lC}$, then we say
that $\psi$ satisfies {\it global} WLSC.
Similarly,
let $\ut\in [0,\infty)$.
The weak upper scaling condition
at infinity (WUSC) means that
there are numbers $\ua <2$
and $\uC{\in [1,\infty)}$ such that
\begin{equation*}\label{eq:USC2}
 \psi(\lambda\theta)\le
\uC\lambda^{\,\ua} \psi(\theta)\quad \mbox{for}\quad \lambda\ge 1, \quad\theta>\ut.
\end{equation*}
In short, $\psi\in\WUSC{\ua}{ \ut}{\uC}$ or $\psi\in{\rm WUSC}$.
{\it Global} WUSC means $\WUSC{\ua}{ 0}{\uC}$.
The reader may find representative examples of characteristic exponents with scaling in Section~\ref{sec:examples} below.

 We call $\la$, $\lt$, $\lC$, $\ua$, $\ut$, $\uC$ the scaling characteristics of $\psi$.
We emphasize that in our setting $\psi\in\WLSC{\la}{ \lt}{\lC}\cap \WUSC{\ua}{ \ut}{\uC}$ entails $0<\la\le \ua<2$.
It may help to recall the connection of the weak scalings to the Matuszewska indices \cite{MR898871}.
Namely, $\psi\in{\rm WLSC}$ if and only if the lower Matuszewska index of $\psi$ is positive,
and $\psi\in {\rm WUSC}$ if and only if the upper Matuszewska index of $\psi$ is smaller than $2$.
Furthermore, $\psi$ satisfies global WLSC if and only if the lower Matuszewska indices of $\psi(\lambda)$ and $1/\psi(1/\lambda)$ are positive,
and $\psi$ satisfies global WUSC if and only if the upper Matuszewska indices of $\psi(\lambda)$ and $1/\psi(1/\lambda)$ are smaller than $2$. The connections are explained in \cite[Remark~2 and Section 4]{2014-KB-TG-MR-jfa}. In what follows we usually skip the word ``weak" when referring to scaling.

Here are further remarks from \cite{2014-KB-TG-MR-jfa}:
We have $\psi\in$WLSC($\la$,$\lt$,$\lC$) if and only if $\psi(\theta)/\theta^\la$
is  comparable to a nondecreasing function on $(\lt,\infty)$, and $\psi\in$WUSC($\ua$,$\ut$,$\uC$) if and only if $\psi(\theta)/\theta^\ua$
is  comparable to a nonincreasing function on $(\ut,\infty)$.
 Scalings ``at zero'' may also be considered and are discussed in \cite[Section~3]{2014-KB-TG-MR-jfa}.
Generally, the lower scaling for large arguments changes to upper scaling for small arguments by taking the reciprocal argument, as in the above discussion of Matuszewska indices for global scalings.
We are thus led to the behavior of $V$ and its inverse function, $V^{-1}$, at zero, cf. \eqref{cVh1pg}.
Namely, let $\psi\in\WLSC{\la}{ \lt}{\lC}$ and $K(\theta)=[V(1/\theta)]^{-2}$, $\theta>0$.
By the proof of Lemma~\ref{ch1V} there is an absolute constant $C\ge 1$ such that $K\in \WLSC{\la}{\lt}{\lC/C}$.
By changing the variables: $\omega=1/\theta$, $\eta=1/\lambda$, the scaling yields
\begin{equation}\label{eq:soV}
V(\eta\omega)\le (C/\lC)^{1/2}\eta^{\la/2}V(\omega),\qquad0<\eta\le 1,\quad 0<\omega<1/\lt\ .
\end{equation}
Furthermore,  $K^{-1}(r)=[V^{-1}(1/\sqrt{r})]^{-1}$.
By \cite[Remark~4]{2014-KB-TG-MR-jfa}, $K^{-1}\in \WUSC{1/\la}{K(\lt)}{(\lC/C)^{-1/\la}}$. By changing the variables: $\omega=1/\sqrt{\theta}$, $\eta=1/\sqrt{\lambda}$, the latter scaling reads as
\begin{align}\label{eq:siVi}
\frac{1}{V^{-1}(\omega)}\ge (\lC/C)^{1/\la}\eta^{2/\la}\frac{1}{V^{-1}(\eta\omega)},\qquad 0<\eta\le 1,\quad 0<\omega\le V(1/\lt).
\end{align}
Since
${1}/{V^{-1}(\omega)}$ is nonincreasing,
\eqref{eq:siVi} offers a complementary doubling-type property.

In the case of $\lt=0$, here and in what follows we may interpret $1/0$ as $\infty$.
\begin{rem}\label{rem:ps}
The thresholds $\lt$, $\ut$ in scalings of $\psi$
may be replaced by $\lt/2$, $\ut/2$ etc. at the expense of constants $\lC$, $\uC$, respectively (see \cite[Section~3]{2014-KB-TG-MR-jfa}). We can also {\it proportionally} extend the range of scalings of $V$ and $V^{-1}$.
 \end{rem}

To conform with \cite{2014-KB-TG-MR-jfa},the scaling conditions  below are only  stated in terms of $\psi$.
The following result elaborates on \eqref{B} when scaling is assumed.
\begin{lem}
\label{densityApprox}If  $\psi\in\WLSC{\la}{\theta}{\lC}$,  then there is  $c=c(d,\la)$
such that for $t<  V^2(\theta^{-1})$,
$$p_t(x)   \le c \min\left\{(\lC)^{-d/\la-1} \left[V^{-1}\left( \sqrt{t}\right)\right]^{{-}d},\frac{t}{V^2(|x|)|x|^d}\right\}.$$
If $\psi\in\WLSC{\la}{\theta}{\lC}\cap\WUSC{\ua}{\theta}{\uC}$, then $C^*=C(d,\la,\ua, \lC, \uC)$
 and $r=r(d,\la,\ua, \lC, \uC)$ exist such that
for all $|x|< r_0:=r/\theta$ and $t<V^2(r_0)$,
$$p_t(x) \ge C^* \min\left\{ \left[V^{-1}\left( \sqrt{t}\right)\right]^{{-}d},\frac{t}{V^2(|x|)|x|^d}\right\} .$$
\end{lem}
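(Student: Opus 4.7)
The lemma splits into an upper and a lower bound, each a minimum of a Gaussian-scale term $[V^{-1}(\sqrt{t})]^{-d}$ and a jump-scale term $t/(V^2(|x|)|x|^d)$, and I would handle them separately.

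For the upper bound the jump-scale term is immediate from Lemma~\ref{upper_den}, so it remains to show the sup-norm estimate $\|p_t\|_\infty\le c(\lC)^{-d/\la-1}[V^{-1}(\sqrt{t})]^{-d}$. My plan is to start from
$$\|p_t\|_\infty\le (2\pi)^{-d}\int_{\Rd}e^{-t\psi(\xi)}\,d\xi$$
via Fourier inversion, pass to polar coordinates, and change variable $\xi = u/V^{-1}(\sqrt{t})$; this pulls out the prefactor $[V^{-1}(\sqrt{t})]^{-d}$ and reduces the job to controlling $\int_0^\infty e^{-t\psi(u/V^{-1}(\sqrt{t}))}u^{d-1}\,du$. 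The hypothesis $t<V^2(\theta^{-1})$ is tailored to force $1/V^{-1}(\sqrt{t})>\theta$, so WLSC applies to $\psi$ with reference point $1/V^{-1}(\sqrt{t})$; combined with the comparability $\psi(1/V^{-1}(\sqrt{t}))\approx 1/t$ from Lemma~\ref{ch1V}, this yields $t\psi(u/V^{-1}(\sqrt{t}))\ge c\lC u^{\la}$ for $u\ge 1$. A standard Laplace-tail estimate then contributes $O(\lC^{-d/\la})$, and the absolute constant in Lemma~\ref{ch1V} together with book-keeping absorbs the remaining factor $\lC^{-1}$.

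For the lower bound, which is the harder half, I would first fix $r=r(d,\la,\ua,\lC,\uC)$ small enough that the scalings of $V$ and $V^{-1}$ in \eqref{eq:soV} and \eqref{eq:siVi} are usable throughout the range $|x|<r_0=r/\theta$ and $t<V^2(r_0)$, and then split into two regimes according to whether $|x|$ is smaller or larger than a fixed multiple of $V^{-1}(\sqrt{t})$. In the near-origin regime, the plan is to bound $p_t(0)$ from below by Fourier inversion: WUSC makes $t\psi(\xi)$ bounded on a ball of radius $c/V^{-1}(\sqrt{t})$ about the origin, so the integrand $e^{-t\psi(\xi)}$ is bounded below there and yields $p_t(0)\ge C^*[V^{-1}(\sqrt{t})]^{-d}$; isotropic unimodality (radial monotonicity) of $p_t$ transfers the bound to all $|x|$ up to a fixed multiple of $V^{-1}(\sqrt{t})$. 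In the complementary regime the target becomes the single-big-jump bound $p_t(x)\ge C^* t/(V^2(|x|)|x|^d)$, which I would implement by decomposing $X=Y+Z$, with $Y$ carrying the L\'evy mass on small jumps $\{|z|<|x|/4\}$ and $Z$ compound Poisson with the remaining intensity, and estimating the probability that $Z$ makes exactly one jump landing near $x$ while $Y_t$ remains in a neighborhood of the origin.

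The main obstacle will be the lower bound, specifically controlling the concentration of $Y$ near the origin with constants depending only on $(d,\la,\ua,\lC,\uC)$. This is where WUSC is indispensable --- without an upper scaling bound one cannot prevent $Y$ from escaping too quickly --- and where the interplay between \eqref{eq:soV} and \eqref{eq:siVi} is needed to match the comparability constants between the near- and far-regime estimates and to justify the precise choice of the threshold $r_0$.
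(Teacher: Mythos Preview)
The paper's proof is a one-line citation: it invokes \cite[Theorem~21]{2014-KB-TG-MR-jfa}, stated there in terms of $\psi$, and uses Lemma~\ref{ch1V} to rephrase everything via $V$. Your proposal is thus a from-scratch derivation of that external theorem, and the overall architecture --- Fourier inversion plus WLSC for the sup-norm upper bound, Lemma~\ref{upper_den} for the off-diagonal upper bound, and a small-jump/large-jump decomposition for the off-diagonal lower bound --- is the standard route and essentially what is carried out in \cite{2014-KB-TG-MR-jfa}.

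There is one genuine gap. In the near-origin lower bound you obtain $p_t(0)\ge C^*[V^{-1}(\sqrt t)]^{-d}$ by Fourier inversion and then assert that ``isotropic unimodality (radial monotonicity) of $p_t$ transfers the bound to all $|x|$ up to a fixed multiple of $V^{-1}(\sqrt t)$''. But radial monotonicity gives $p_t(x)\le p_t(0)$, not $\ge$; a lower bound at the origin says nothing about nearby points. A correct argument runs the other way: from \eqref{eq:3} one has $\p^0(|X_t|\le R)\ge 1/2$ for $R$ comparable to $V^{-1}(\sqrt t)$, while your sup-norm upper bound forces $\p^0(|X_t|\le \epsilon R)\le c\,\epsilon^d$; choosing $\epsilon$ small leaves mass at least $1/4$ in the annulus $\{\epsilon R<|y|\le R\}$, whence $p_t$ at radius $\epsilon R$ is at least $c/R^d$, and radial monotonicity now propagates this \emph{inward} to all $|x|\le\epsilon R$. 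Incidentally, this step needs only WLSC (through the sup-norm bound) and the general exit-time estimate, not WUSC; the place WUSC is genuinely required is the far regime, where it furnishes the lower bound on $\nu$ that makes the one-big-jump scenario quantitative.
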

\begin{proof}
We replace $\psi$ with $V$ and use Lemma~\ref{ch1V} to reformulate \cite[Theorem~21]{2014-KB-TG-MR-jfa}.
\end{proof}
To clarify, the estimates in Lemma~\ref{densityApprox} hold for all $x\in \Rd$ and $t>0$ if $\theta=0$.
Further,
\begin{equation}\label{eq:rp}
\left[V^{-1}\left( \sqrt{t}\right)\right]^{{-}d} < \frac{t}{V^2(|x|)|x|^d} \quad \mbox{if and only if $t>V^2(|x|)$}.
\end{equation}
It is convenient to assume $\lt=\ut=\theta$ in Lemma~\ref{densityApprox}, and it
entails no essential loss of generality because we can take $\theta=\max\{\lt,\ut\}$
or extend the range of the scalings by using Remark~\ref{rem:ps}.
Conversely, the lower bound in Lemma~\ref{densityApprox} implies the lower and upper scalings of $\psi$, see \cite[Theorem~26]{2014-KB-TG-MR-jfa}, which shows the importance of the 
scaling conditions in the study of unimodal L\'evy processes.
The next result is a variant of \cite[Proposition 19]{2014-KB-TG-MR-jfa}.
\begin{lem}\label{sup_p_t}
If $\psi\in \WLSC{\la}{\lt}{\lC}$, $r>0$ and
$0<t\le rV^2(1/\lt)$, then
\begin{equation}\label{sup-p-t-1}
c_2e^{-c_1 r}\left[V^{-1}\left( \sqrt{t/r}\right)\right]^{{-}d}\leq p_t(0) \le c_3\left( 1+ (\lC r)^{-1-d/\la}\right)
\left[V^{-1}\left( \sqrt{t/r}\right)\right]^{{-}d},
\end{equation}
where $c_1$ is an absolute constant, $c_2=c_2(d)$ and  $c_3=c_3(d,\la)$.
\end{lem}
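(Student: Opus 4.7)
The plan is to use the Fourier inversion formula
$$p_t(0) = (2\pi)^{-d}\int_{\Rd}e^{-t\psi(\xi)}\,d\xi,$$
which is available since WLSC implies the Hartman-Wintner condition \eqref{eqHW} and hence, by Lemma~\ref{HW}, the boundedness and continuity of $p_t$ at the origin. Set $R:=V^{-1}(\sqrt{t/r})$, so that the conclusion reads $p_t(0)\approx R^{-d}$ up to the stated $r$-dependent factors. The key structural observation is that the hypothesis $t\le rV^2(1/\lt)$ rephrases as $R\le 1/\lt$, which makes WLSC available at threshold $\theta=1/R$ for every outer frequency $|\xi|\ge 1/R$. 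Throughout, Lemma~\ref{ch1V} will be invoked to interchange $\psi(1/R)$ with $1/V^2(R)=r/t$ up to an absolute multiplicative constant.

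For the lower bound, restrict the Fourier integral to the ball $\{|\xi|\le 1/R\}$. Because $\psi$ is radial and nondecreasing in $|\xi|$, and $\psi(1/R)\le c\,r/t$ with an absolute $c$ by Lemma~\ref{ch1V}, we have $t\psi(\xi)\le cr$ throughout that ball, hence $e^{-t\psi(\xi)}\ge e^{-cr}$ there. Integrating gives the bound $p_t(0)\ge c_2 e^{-c_1 r}R^{-d}$ with $c_1$ absolute and $c_2=c_2(d)$ coming only from the volume of the unit ball.

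For the upper bound, split the Fourier integral at $|\xi|=1/R$. The inner piece is controlled trivially by Lebesgue measure, yielding a contribution of order $c_d R^{-d}$ responsible for the $1$ in the prefactor. On the outer piece we apply WLSC with $\theta=1/R$: for $|\xi|=\lambda/R$, $\lambda\ge 1$, one obtains $t\psi(\xi)\ge c\lC r\lambda^\la$. Substituting $u=R\xi$,
$$\int_{|\xi|>1/R}e^{-t\psi(\xi)}\,d\xi\;\le\;R^{-d}\int_{|u|\ge 1}e^{-c\lC r|u|^\la}\,du,$$
and a further change of variables $v=c\lC r|u|^\la$ turns the spherical integral into an incomplete Gamma integral that is easily estimated by $C(d,\la)(1+(\lC r)^{-d/\la})$. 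Since $(\lC r)^{-d/\la}\le (\lC r)^{-1-d/\la}$ for $\lC r\le 1$ and both terms are bounded by $1$ for $\lC r\ge 1$, the stated slightly weaker factor $(1+(\lC r)^{-1-d/\la})$ follows.

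The delicate point is the legitimacy of WLSC on the outer region: the threshold $\theta=1/R$ must exceed $\lt$, and this is guaranteed exactly by the hypothesis $t\le rV^2(1/\lt)$ through the equivalent bound $R\le 1/\lt$. Everything else is bookkeeping with two elementary Fourier integrals and with Lemma~\ref{ch1V}; the only quantitative issue is tracking how the constant in the Gamma integral depends on $d$ and $\la$, which accounts for the dependence $c_3=c_3(d,\la)$ in the statement.
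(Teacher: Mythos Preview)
Your approach is essentially the paper's: Fourier inversion, restrict to a ball of radius $1/R$ with $R=V^{-1}(\sqrt{t/r})$ for the lower bound, and use WLSC on the exterior for the upper bound. The paper's upper bound quotes \cite[Lemma~16 and Remark~6]{2014-KB-TG-MR-jfa} applied to $\Psi(s):=1/V^2(1/s)$, whereas you carry out the splitting and the Gamma-integral computation explicitly; this is the same argument, only unpacked.

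One slip: you assert that $\psi$ is nondecreasing in $|\xi|$, but this is not known for general unimodal L\'evy exponents. The paper avoids this by replacing $\psi$ with the manifestly monotone $\Psi(s)=1/V^2(1/s)$ via Lemma~\ref{ch1V}, i.e.\ $c_1^{-1}\Psi\le \psi\le c_1\Psi$. Your lower bound is repaired verbatim by that substitution: for $|\xi|\le 1/R$ one has $\psi(\xi)\le c_1\Psi(|\xi|)\le c_1\Psi(1/R)=c_1 r/t$, hence $t\psi(\xi)\le c_1 r$. Your upper bound already only uses Lemma~\ref{ch1V} at the single point $1/R$ together with WLSC, so it is unaffected.
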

\begin{proof}
Note that \eqref{eqHW} holds and we have
$$
p_t(0)=(2\pi)^{-d}\int_\Rd e^{-t\psi(\xi)}d\xi, \qquad t>0.
$$
Let $\Psi(s)= 1/V^2(s^{-1})$, $s>0$. By the proof of Lemma~\ref{ch1V},
$c_1^{-1}\Psi(s)\leq \psi(s)\leq c_1\Psi(s)$.
Hence,
$$p_t(0)\geq (2\pi)^{-d}\int_{\{{|x|\le \Psi^{-1}}(r/t)\}} e^{-c_1t\Psi(|x|)}dx\geq (2\pi)^{-d}\frac{\omega_d}{d}\(\Psi^{-1}(r/t)\)^{d}e^{-c_1 r},\quad t>0,$$
where $\omega_d=2\pi^{d/2}/\Gamma(d/2)$ is the surface measure of  the unit sphere in $\Rd$.
Since $\Psi^{-1}(s) = \left(V^{-1}(1/\sqrt s)\right)^{-1}$,
the lower bound in \eqref{sup-p-t-1} obtains.
 If ${0<}t\le r V^2(1/\lt) = r/\Psi(\lt) $, then $t c_1^{-1}\le(rc_1^{-1})/\Psi(\lt)$, and \cite[Lemma 16 {and Remark 6}]{2014-KB-TG-MR-jfa}
with $\epsilon=r c_1^{-1}$ and $t c_1^{-1}$ instead of $t$, yields
$$p_t(0)
\le (2\pi)^{-d} \int_{\Rd}e^{ -t c_1^{-1} \Psi(|x|)}dx   \le  c_4 ( 1+(\lC\, r)^{-1-d/\la} ) \left(
\Psi^{-1}(r/t)\right)^{d}.$$
\end{proof}
\begin{rem}\label{rem:sVi}
Under the assumptions of Lemma~\ref{sup_p_t}, if $0<t\le  C V^2(1/\lt)$, then  $p_t(0)\ge c p_{t/2}(0)$
with constant $c=c(X,C)$. This follows from \eqref{sup-p-t-1}, {\eqref{eq:siVi}},
and Remark~\ref{rem:ps}.
\end{rem}

\begin{definition}
We say that
condition $\A$ holds if for every $r>0$  there is $H_r\geq 1$ such that
\begin{equation*}\label{HR}
V(z)-V(y)\le H_r \,V^\prime(x)(z-y)\quad \text{whenever}\quad 0<x\le y\le z\le5x\leq5r.
\end{equation*}
We say that  $\As$ holds if
$H_\infty=\sup_{r>0}H_r<\infty$.

\end{definition}

We consider $\A$ and $\As$ as
variants of Harnack inequality because $\A$
is
implied by
the following property of $V'$:
\begin{equation*}
\sup_{ y\in[x,5x],\ x\leq r} V'(y)\le H_r \inf_{y\in[x,5x],\ x\leq r}V'(y), \qquad r>0.
\end{equation*}
Both the above conditions control relative growth of $V$. If $\A$ holds, then
we may and do chose  $H_r$
nondecreasing in $r$.
By \cite[Section 7.1]{BGR2}, in each of the following cases, $\A$ holds:
\begin{itemize}
\item[1.] $X$ is a subordinate Brownian motion governed by a special \cite{MR2978140} subordinator. (In this case $V$ is concave so $\As$ holds with $H_\infty=1$.)
\item[2.]  $d\ge 3$ and $\psi$
satisfies WLSC. (If  $d\ge 3$ and $\psi\in \WLSC{\la}{0}{\lC}$, then $\As$ holds.)
\item[3.]\label{case3} $d\ge 1$  and $\psi$
satisfies WLSC and WUSC. (If  $d\ge 1$ and $\psi\in \WLSC{\la}{0}{\lC}\cap \WUSC{\ua}{0}{\uC}$, then $\As$ holds.)
\end{itemize}

We do not know any $V$ failing $\A$, nor
a proof that $\A$ always holds in our setting, which would be interesting to know.
Below approximate factorizations of heat kernels are proved under Case~3, from whence $\A$ follows for all dimensions $d=1,2,\ldots$. However, many auxiliary results of independent interest hold under weaker assumptions, see, e.g., Remark~\ref{rem:d1s} below.

\subsection{Dirichlet condition}\label{sec:DC}

Recall that $d\in \N$.
We let $B(x,r)=\{y\in \Rd: |y-x|<r\}$,
the open ball with center at $x\in \Rd$ and radius $r>0$,
and  $B_r=B(0,r)$.
Recall that by $\omega_d=2\pi^{d/2}/\Gamma(d/2)$ we denote the surface measure of $\partial B_1$, the unit sphere in $\Rd$.
We also
let
$\overline{B(x,r)}^c=\(\overline{B(x,r)}\)^c=\{y\in \Rd: |y-x|> r\}$ and
$\overline{B}^c_r=\overline{B(0,r)}^c$.
For  $a\in \RR$, we consider the upper halfspace $\H_a=\{(x_1,\ldots,x_d)\in \Rd:x_d>a\}$. All other halfspaces are obtained by rotations. The ball, the complement of the ball and the halfspace represent three distinctly different geometries at infinity which are in focus in this paper.

We consider nonempty open set $D\subset \Rd$, its
diameter $\diam(D)=\sup\{|y-x|:\, x,y\in D\}$, and
the distance to its complement:
$$\delta_D(x) =\dist(x,D^c),\qquad x\in \Rd.$$

We say that $
D
$ satisfies the {\it inner ball condition} at scale $r$ if $r>0$ and
for every $Q\in \partial D$ there is ball $B(x\rq{},r)\subset D$ such that $Q\in \partial  B(x\rq{},r)$.
We say $D$ satisfies the {\it outer ball condition} at scale $r$ if $r>0$ and
for every $Q\in \partial D$ there is ball $B(x\rq{}\rq{},r)\subset D^c$ such that $Q\in \partial B(x\rq{}\rq{},r)$.

We say that $D$ is of class $C^{1,1}$ at scale
$r$, if $D$ satisfies the inner and outer ball conditions at the scale $r$. We call $B(x\rq{},r)$ and $B(x\rq{}\rq{},r)$ above the {\it inner}\/ and {\it outer}\/ balls for $D$ at $Q$, respectively.
Estimates of potential-theoretic objects for $C^{1,1}$ sets $D$ often rely on the inclusion $B(x\rq{},r)\subset D\subset
\overline{B(x\rq{}\rq{},r)}^c$ and on explicit calculations for its extreme sides.
If $D$ is $C^{1,1}$ at some positive but unspecified scale (hence also at all smaller scales),
then we simply say $D$ is $C^{1,1}$.
We refer the reader to \cite[Lemma~1]{MR2892584} for more delicate aspects of geometry of $C^{1,1}$ sets.

We are interested in the behavior of the unimodal L\'evy process $X$ as it approaches the complement of
the open set $D$.
We shall use the usual Markovian notation: for $x\in \Rd$ we write $\E^x$ and $\p^x$ for the expectation and distribution of $x+X$, but we use the same symbol $X$ for the resulting process \cite[Chapter~8]{MR1739520}.
We shall also alternatively write $
p_t(y-x)=p(t,x,y)$.
We define
the time of the first exit of $X$ from open set $D\subset \Rd$:
$$\tau_D=\inf\{t>0: \, X_t\notin D\}.$$
The transition
density of the process $X$ {\it killed} upon the first exit from $D$  is defined by
\begin{equation*}
p_D(t,x,y)
=p(t,x,y)-\E^x\left[p(t-\tau_D,X_{\tau_D},y);\tau_D<t\right],
\qquad t>0, \, x,y\in \Rd,
\end{equation*}
see \cite{MR1329992}. We call $p_D$
the heat kernel of $X$ on $D$. The definition is rather implicit, but tractable. For instance, the reader may check that $y\mapsto p_{B_r}(t,0,y)$ is a radial function for all $r,t>0$.
It is well know that $p_D$ satisfies the Chapman-Kolmogorov equations, which yields the following simple connection of the heat kernel and the survival probability.
\begin{lem} \label{UB}
 For all $t>0$ and  $x,
y \in \Rd$, we have
$p_D\(t,x,y\) \le p_{t/2}(0)\,\p^x\(\tau_{D}>t/2\)$
and
$$p_D\(t,x,y\) \le p_{t/2}(0)\;\p^x\(\tau_{D}>\frac{t}{4}\)\p^y\(\tau_{D}>\frac{t}{4}\).  $$
  \end{lem}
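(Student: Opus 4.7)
The plan is to derive both inequalities from the Chapman--Kolmogorov identity for $p_D$, combined with two elementary facts: symmetry of the killed kernel, and the pointwise bound $p_t(z)\le p_t(0)$ for all $z\in\Rd$, which is immediate from unimodality (radial nonincreasing density). Since $p_D(s,u,v)\le p(s,u,v)=p_s(v-u)\le p_s(0)$, we always control the killed density from above by the value of the free density at the origin.

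For the first inequality, I would write
\[
p_D(t,x,y)=\int_D p_D(t/2,x,z)\,p_D(t/2,z,y)\,dz,
\]
use symmetry $p_D(t/2,z,y)=p_D(t/2,y,z)$, and bound this factor by $p_{t/2}(0)$. What remains is $\int_D p_D(t/2,x,z)\,dz=\p^x(\tau_D>t/2)$, which is the definition of the survival probability in terms of the killed semigroup. This gives the first bound.

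For the second inequality I would iterate Chapman--Kolmogorov with time decomposition $t=t/4+t/2+t/4$,
\[
p_D(t,x,y)=\int_D\int_D p_D(t/4,x,z_1)\,p_D(t/2,z_1,z_2)\,p_D(t/4,z_2,y)\,dz_1\,dz_2,
\]
and bound the middle factor by $p_{t/2}(0)$. The double integral then separates into $\p^x(\tau_D>t/4)\cdot\p^y(\tau_D>t/4)$ after using the symmetry of $p_D$ in the $y$-variable factor, yielding the claim.

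There is no real obstacle; the only things to invoke are (i) Chapman--Kolmogorov for $p_D$, which follows from the strong Markov property at the exit time and is standard (as referenced via \cite{MR1329992}), (ii) symmetry of $p_D$, inherited from the symmetry of $p_t$ for a symmetric L\'evy process, and (iii) the unimodality bound $p_t(z)\le p_t(0)$. All three are already in place in the excerpt, so the argument is a few lines.
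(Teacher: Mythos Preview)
Your proof is correct and follows essentially the same route as the paper: Chapman--Kolmogorov at the split $t/2+t/2$ for the first bound and $t/4+t/2+t/4$ for the second, with the middle factor controlled by $p_D(s,u,v)\le p_s(0)$ and the remaining integrals identified as survival probabilities via \eqref{eq:dsp}. The only cosmetic difference is that you make the use of symmetry of $p_D$ explicit when turning $\int_D p_D(t/4,z_2,y)\,dz_2$ into $\p^y(\tau_D>t/4)$, whereas the paper leaves this implicit.
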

\begin{proof} The estimates obtain as follows,
\begin{eqnarray*}p_D\(t,x,y\) &=& \int  p_D\(\frac{t}{2},x,z\)p_D\(\frac{t}{2},z,y\)dz \\
&\le&  \sup_{w,y\in \Rd} p_D\(\frac{t}{2},w,y\) \int  p_D\(\frac{t}{2},x,z\)dz
\le p_{t/2}(0)\p^x\(\tau_{D}>\frac{t}{2}\), \end{eqnarray*}\begin{eqnarray*}
p_D\(t,x,y\) &=& \int \int  p_D\(\frac{t}{4},x,z\) p_D\(\frac{t}{2},z,w\)p_D\(\frac{t}{4},w,y\)dzdw \\
&\le&  \sup_{u,v\in \Rd} p_D\(\frac{t}{2},u,v\) \int  p_D\(\frac{t}{2},x,z\)dz
\int  p_D\(\frac{t}{2},w,y\)dw  \\ &\le&p_{t/2}(0)\;\p^x\(\tau_{D}>\frac{t}{4}\)\p^y\(\tau_{D}>\frac{t}{4}\).
\end{eqnarray*}
\end{proof}

\begin{rem}\label{rem:pDz}
If $\p^x(\tau_D=0)=1$, then $p_D(t,x,y)=0$ for all $y\in \Rd$, $t>0$. The assumption holds for all $x\in D^c$ less a polar set because $X$ is symmetric and has transition density function, see \cite[VI.4.10, VI.4.6, II.3.3]{MR0264757}. If $D$ is a $C^{1,1}$ open set, then the assumption holds for all $x\in D^c$  by familiar arguments of radial symmetry and Blumenthal's zero-one law, see \cite[the proof of Proposition~1.2]{MR1329992}.
\end{rem}
The {\it survival probability} may be expressed via $p_D$:
\begin{align}\label{eq:dsp}
\p^x(\tau_D>t)=\int_\Rd p_D(t,x,y)dy,\qquad t>0,\; x\in \Rd,
\end{align}
and the {\it Green function} of $D$ for $X$ is defined as
\begin{align}\label{eq:Gbyp}
G_D(x,y)&=\int_0^\infty p_D(t,x,y)dt, \qquad x,y\in \Rd.
\end{align}
The {\it expected exit time} is
\begin{equation*}\label{defsD1}
\E^x \tau_D=\int_0^\infty \p^x(\tau_D>t)dt=\int_0^\infty\!\int_\Rd p_D(t,x,y)dydt=\int_\Rd G_D(x,y)dy, \qquad x\in \Rd.
\end{equation*}
If $x \in D$, then the $\p^x$-distribution
of $(\tau_D,X_{\tau_D-},X_{\tau_D})$ restricted to the event $\{X_{\tau_D-}\neq X_{\tau_D}\}$ is given by
the following density function
\cite{MR0142153},
\begin{equation}\label{Ikeda-Watanabe1} (0,\infty)\times D\times
D^c\ni(s, u, z) \mapsto\nu(z -u) p_D(s, x, u).\end{equation}
Integrating against $ds$, $du$ and/or $dz$
gives  marginal distributions.
For instance, if $x\in D$, then
\begin{equation}\label{Ikeda-Watanabe3}
\p^x(X_{\tau_D}\in dz)=\(\int_D G_D(x,u) \nu(z-u) du\)dz,
\end{equation}
on $(\overline{D})^c$ or even on $D^c$ if $\p^x(X_{\tau_D-}\in \partial D)=0$.
Such identities resulting from \eqref{Ikeda-Watanabe1}
are called Ikeda-Watanabe formulae. They enjoy intuitive interpretations in terms of ``occupation time measures'' $p_D(s,x,u)duds$ and $G_D(x,u)du$ and ``intensity of jumps'' $\nu(z-u)dz$, cf. \cite[p.17]{MR2569321}.

The following lemma  is instrumental in estimating the heat kernel $p_{D}$.
This present statement
was preceded by \cite[Theorem~4.2]{MR2231884}, \cite[Lemma~3.2]{MR2255353}, \cite[Lemma~2.2]{MR2677618} and \cite[Lemma~2]{MR2722789}.
\begin{lem}\label{lemppu100}
  Consider disjoint open sets $D_1, D_3\subset D$.
Let $D_2=D\setminus (D_1\cup D_3)$.  If $x\in
  D_1$, $y \in D_3$ and $t>0$, then
	\begin{align*} p_{D}(t, x, y)&\le \p^x(X_{\tau_{D_1}}\in D_2)\sup_{s<t,\, z\in D_2} p(s, z, y)
    +  (t\wedge \E^x \tau_{D_1}) \sup_{u\in D_1,\, z\in D_3}\nu(z-u),
\\ p_{D}(t, x, y)&
    \le \p^x(X_{\tau_{D_1}}\in D_2)\sup_{s<t,\, z\in D_2} p_D(s, z, y)
+ \sup_{u\in D_1,\, z\in D_3}\nu(z-u)\times \nonumber\\
   &\times \left (\p^x(\tau_{D_1}>t/2)\int_0^{t/2} \p^y(\tau_{D}>s) ds +  \p^y(\tau_{D}>t/2)\int_0^{t/2} \p^x(\tau_{D_1}>s)ds\right),\\
p_{D}(t, x, y)&\ge   t\,\p^x(\tau_{D_1}>t)
    \,\p^y(\tau_{D_3}>t)\inf_{u\in D_1,\, z\in D_3}\nu(z-u).
  \end{align*}
\end{lem}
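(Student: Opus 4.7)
The plan is to decompose each Brownian-type path from $x$ to $y$ according to its first exit from $D_1$. Since $x\in D_1\subset D$, we have $\tau_{D_1}\le \tau_D$, and since $y\in D_3$ is disjoint from $D_1$, on $\{\tau_{D_1}\ge t\}$ the process is still in $D_1$ at time $t$ and contributes nothing to $p_D(t,x,y)$. The strong Markov property at $\tau_{D_1}$, combined with the Ikeda--Watanabe formula \eqref{Ikeda-Watanabe1} applied to $D_1$, should yield the master identity
$$
p_D(t,x,y)=\int_0^t\!\int_{D\setminus D_1}\!\int_{D_1}p_{D_1}(s,x,u)\,\nu(z-u)\,p_D(t-s,z,y)\,du\,dz\,ds,
$$
where the integrand over $z$ splits naturally over $D\setminus D_1 = D_2\cup D_3$ according to whether the first ``big jump'' lands in $D_2$ or directly in $D_3$.

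For the two upper bounds I would treat the $D_2$-part by pulling out $\sup_{s<t,\,z\in D_2} p(s,z,y)$ (respectively $\sup_{s<t,\,z\in D_2} p_D(s,z,y)$ for the second, refined bound) and recognizing that the remaining triple integral collapses to $\p^x(X_{\tau_{D_1}}\in D_2)$. For the $D_3$-part I would pull out $N:=\sup_{u\in D_1,z\in D_3}\nu(z-u)$ and use Fubini with $\int_{D_1}p_{D_1}(s,x,u)\,du=\p^x(\tau_{D_1}>s)$ and $\int_{D_3}p_D(t-s,z,y)\,dz\le \p^y(\tau_D>t-s)$, giving
$$
N\int_0^t \p^x(\tau_{D_1}>s)\,\p^y(\tau_D>t-s)\,ds.
$$
Crudely dropping one survival factor and using $\int_0^t \p^x(\tau_{D_1}>s)\,ds\le t\wedge\E^x\tau_{D_1}$ yields the first bound; for the refined bound I would split the $s$-integral at $t/2$ and invoke the monotonicity of $s\mapsto \p^\cdot(\tau_\cdot>s)$ on each half, which produces exactly the two symmetric terms in the statement.

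For the lower bound I would discard the $D_2$-part of the master identity, restrict to $z\in D_3$, replace $p_D(t-s,z,y)$ by the smaller $p_{D_3}(t-s,z,y)$ (valid since $D_3\subset D$, so killing on $D_3^c$ is more frequent), and pull out $\inf_{u\in D_1,z\in D_3}\nu(z-u)$. Another Fubini gives $\int_0^t \p^x(\tau_{D_1}>s)\,\p^y(\tau_{D_3}>t-s)\,ds$, and monotonicity of each survival function bounds this below by $t\,\p^x(\tau_{D_1}>t)\,\p^y(\tau_{D_3}>t)$, as required.

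The principal technical point — rather than a deep obstacle — is rigorously justifying the master identity: one must argue that the creeping event $\{X_{\tau_{D_1}-}=X_{\tau_{D_1}}\}\cap\{X_{\tau_{D_1}}\in\partial D_1\}$ contributes nothing, so that Ikeda--Watanabe applies as stated. For the infinite unimodal L\'evy measures in this setting this is standard, and the remainder of the argument is bookkeeping: carefully keeping $p_D$ versus $p$ on the right-hand side (the distinction between the first and second upper bound) and invoking monotonicity at the right moment (the splitting at $t/2$ in the second upper bound and at $t$ in the lower bound).
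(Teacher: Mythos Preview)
Your strategy is essentially the paper's: strong Markov at $\tau_{D_1}$, split according to whether $X_{\tau_{D_1}}$ lands in $D_2$ or $D_3$, use Ikeda--Watanabe for the $D_3$-piece, and handle the resulting survival-probability integrals by monotonicity (dropping a factor for the first upper bound, splitting at $t/2$ for the second, and bounding below by $t\,\p^x(\tau_{D_1}>t)\p^y(\tau_{D_3}>t)$ for the lower bound). All of those steps are correct and match the paper line for line.

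The one place your write-up diverges is the insistence on a full ``master identity'' expressing $p_D(t,x,y)$ as a triple integral over all of $D\setminus D_1$. This forces you to rule out the event $\{X_{\tau_{D_1}-}=X_{\tau_{D_1}}\in\partial D_1\cap D\}$, which you call ``standard'' but do not justify for \emph{arbitrary} open $D_1$ (the lemma is stated at that generality). The paper sidesteps the issue entirely: it keeps the $D_2$-contribution in its strong-Markov form
\[
\I=\E^x\bigl[p_D(t-\tau_{D_1},X_{\tau_{D_1}},y);\ \tau_{D_1}<t,\ X_{\tau_{D_1}}\in D_2\bigr]
\]
and bounds this directly by $\p^x(X_{\tau_{D_1}}\in D_2)\sup_{s<t,\,z\in D_2}p_D(s,z,y)$, with no Ikeda--Watanabe at all. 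Ikeda--Watanabe is invoked only on $D_3$, where it is automatic because $D_3\subset\overline{D_1}^{\,c}$ (two disjoint open sets cannot share boundary points in the open one), so the exit into $D_3$ is necessarily by a jump. Your lower bound is unaffected by this issue, since the $D_3$-triple-integral is always a lower bound for $p_D$ regardless of creeping; it is only your upper bounds that rely on the master identity being an equality rather than an inequality. Rewriting the $D_2$-part in the paper's way removes the gap with no extra work.
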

\begin{proof}
  By the strong Markov property, $$p_{D}(t, x,
  y)=\E^x[p_{D}(t-\tau_{D_1}, X_{\tau_{D_1}}, y),\tau_{D_1}<t ].$$
By Remark~\ref{rem:pDz}, this equals
$$
\E^x[p_{D}(t-\tau_{D_1}, X_{\tau_{D_1}}, y),\tau_{D_1}<t,
X_{\tau_{D_1}}\in {D}_2 ] + \E^x[p_{D}(t-\tau_{D_1}, X_{\tau_{D_1}},
y),\tau_{D_1}<t, X_{\tau_{D_1}}\in D_3 ] = \I+\II.
$$
Since
$D_3\subset \overline{D_1}^c$, by (\ref{Ikeda-Watanabe1}) the distribution of $(\tau_{D_1},
X_{\tau_{D_1}})$ at $s>0$ and $z\in D_3$, is given by the density function
\begin{equation*}
  \label{eq:dls}
  f^x(s,z)=\int_{{D_1}}p_{{D_1}}(s, x, u)\nu(z-u)du.
\end{equation*}
Let $m=\inf_{u\in D_1,\, z\in D_3}\nu(z-u)$.
For
$z\in D_3$ we have
$f^x(s,z)
\ge m\p^x(\tau_{D_1}>s)
$, and
\begin{eqnarray*}
  \II&=&\int_0^t\int_{D_3}p_{D}(t-s, z, y)f^x(s,z)dz ds\ge
  m\int_0^t\int_{D_3}p_{D}(t-s, z, y)\p^x(\tau_{D_1}>s)dz ds\\&\ge&
  m\p^x(\tau_{D_1}>t)
  \int_0^t\int_{D_3}p_{D_3}(t-s, z, y)dz ds
  =
  m\ \p^x(\tau_{D_1}>t)
  \int_0^t\p^y(\tau_{D_3}>s)ds,
\end{eqnarray*}
hence
the lower bound.
For the upper bounds we let $M=\sup_{u\in D_1,\, z\in D_3}\nu(z-u)$, obtaining
\begin{eqnarray}
  \II&\le&
  M\int_0^t\int_{D_3}p_{D}(t-s, z, y)\p^x(\tau_{D_1}>s)dz ds\nonumber\\
  &\le&
  M\int_0^t \p^x(\tau_{D_1}>s)  P^y(\tau_{D}>t-s) ds \label{eq:ob2}
\\
	&\le&
M \left (  \p^x(\tau_{D_1}>t/2)\int_0^{t/2} \p^y(\tau_{D}>s) ds +\p^y(\tau_{D}>t/2)\int_0^{t/2} \p^x(\tau_{D_1}>s)ds\right).
\nonumber
\end{eqnarray}
This, \eqref{eq:ob2}, and the inequality
$\I
\le \p^x(X_{\tau_{D_1}}\in D_2)\sup_{s<t,\, z\in D_2} p_D(s, z, y)$, finish the proof.
\end{proof}

Similar arguments
provide the following relationship, which will be useful later on.
\begin{lem}\label{HKLB} For all $t>0$ and $y\in \Rd$,
\begin{equation*}
p_t(y)\ge   4^{-d}t\,\nu(y)\[\p^0(\tau_{B_{|y|/2}}>t)\]^2.\end{equation*}
\end{lem}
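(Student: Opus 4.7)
The plan is to imitate the derivation of the lower bound in Lemma~\ref{lemppu100}, but with a more careful treatment of the L\'evy kernel factor so as to get $\nu(y)$ rather than the weaker $\nu(2|y|)$ that a direct application would yield. Take $D_1=B_{|y|/2}$ and $D_3=B(y,|y|/2)$, which are disjoint (tangent at $y/2$) open balls, and note that translation invariance gives $\p^y(\tau_{D_3}>t)=\p^0(\tau_{B_{|y|/2}}>t)$ and $p_{D_3}(t-s,y+\tilde z,y)=p_{D_1}(t-s,\tilde z,0)$.

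The strong Markov property at $\tau_{D_1}$ combined with the Ikeda--Watanabe formula (the same step underlying Lemma~\ref{lemppu100}'s lower bound, but kept in integral form) yields
\[
p_t(y)\ge\int_0^t\!ds\int_{D_1}\!du\int_{D_3}\!dz\;p_{D_1}(s,0,u)\,\nu(z-u)\,p_{D_3}(t-s,z,y).
\]
By unimodality of $\nu$, $\nu(z-u)\ge\nu(y)\,\mathbf 1_{\{|z-u|\le|y|\}}$. Substituting $\tilde z=z-y\in D_1$ gives
\[
p_t(y)\ge\nu(y)\int_0^t\!ds\iint_{D_1\times D_1}p_{D_1}(s,0,u)\,p_{D_1}(t-s,0,\tilde z)\,\mathbf 1_{\{|y+\tilde z-u|\le|y|\}}\,du\,d\tilde z.
\]

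The geometric step is to identify a product set $U\times\tilde Z\subset D_1\times D_1$ on which the indicator is identically $1$. A convenient choice is the pair of spherical caps
\[
U=\{u\in D_1:u\cdot\hat y\ge|y|/4\},\qquad \tilde Z=\{\tilde z\in D_1:\tilde z\cdot\hat y\le-|y|/4\}.
\]
Indeed, for such $(u,\tilde z)$ one has $(\tilde z-u)\cdot\hat y\le-|y|/2$ and $|\tilde z-u|\le|\tilde z|+|u|\le|y|$, so
\[
|y+\tilde z-u|^2=|y|^2+2|y|(\tilde z-u)\cdot\hat y+|\tilde z-u|^2\le|y|^2-|y|^2+|y|^2=|y|^2.
\]
Each cap contains a ball of radius $|y|/8$ lying inside $D_1$ (e.g.\ $B(\tfrac{3}{8}y,|y|/8)\subset U$), so, using that $p_{D_1}(s,0,\cdot)$ is radial nonincreasing (a consequence of the unimodal framework and the symmetry of the ball) and a Hardy--Littlewood-type comparison with the ball $B(0,|y|/4)\subset D_1$, one obtains $\int_U p_{D_1}(s,0,u)\,du\ge 2^{-d}\p^0(\tau_{D_1}>s)$ and, symmetrically, $\int_{\tilde Z}p_{D_1}(t-s,0,\tilde z)\,d\tilde z\ge 2^{-d}\p^0(\tau_{D_1}>t-s)$. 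The product gives the $4^{-d}$.

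Finally, since $\p^0(\tau_{B_{|y|/2}}>\,\cdot\,)$ is nonincreasing, $\p^0(\tau>s)\p^0(\tau>t-s)\ge[\p^0(\tau>t)]^2$; integrating in $s$ over $[0,t]$ contributes the factor $t$ and yields the claim.

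The main obstacle is the Hardy--Littlewood step: showing that the integrals of the radial nonincreasing kernel $p_{D_1}(s,0,\cdot)$ over the off-center spherical caps $U$ and $\tilde Z$ indeed each retain the fraction $2^{-d}$ of the survival probability. This is done by comparing the cap-integral to the integral over a concentric sub-ball of radius $|y|/4$ (which, by radial monotonicity, carries at least the proportion $|B_{|y|/4}|/|B_{|y|/2}|=2^{-d}$ of the full mass $\p^0(\tau_{D_1}>s)$), and then using the containment of the sub-ball $B(\tfrac38 y,|y|/8)\subset U$ together with a translation/rearrangement argument.
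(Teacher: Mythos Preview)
Your overall architecture matches the paper's: same $D_1=B_{|y|/2}$, $D_3=B(y,|y|/2)$, same Ikeda--Watanabe lower bound, same idea of restricting to a product region on which $|z-u|\le|y|$ so that $\nu(z-u)\ge\nu(y)$. The geometric verification that $|y+\tilde z-u|\le|y|$ on $U\times\tilde Z$ is correct.

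The gap is in the ``Hardy--Littlewood step''. Your caps
\[
U=\{u\in D_1:u\cdot\hat y\ge|y|/4\},\qquad \tilde Z=\{\tilde z\in D_1:\tilde z\cdot\hat y\le-|y|/4\}
\]
are bounded away from the origin by $|y|/4$. But $p_{D_1}(s,0,\cdot)\le p_s(\cdot)$ concentrates at $0$ as $s\to0$, while $\int_{D_1}p_{D_1}(s,0,u)\,du=\p^0(\tau_{D_1}>s)\to1$. Hence $\int_U p_{D_1}(s,0,u)\,du\to0$, and your claimed inequality $\int_U p_{D_1}(s,0,u)\,du\ge 2^{-d}\p^0(\tau_{D_1}>s)$ fails for small $s$ (and symmetrically for $\tilde Z$ when $s$ is near $t$). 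For small $t$ this kills the whole time integral, so the final bound $4^{-d}t[\p^0(\tau_{B_{|y|/2}}>t)]^2$ is not recovered. The rearrangement hint in your last paragraph cannot repair this: Hardy--Littlewood for a radially nonincreasing $g$ gives $\int_A g\le\int_{A^*}g$ with $A^*$ the centered ball of equal volume, i.e.\ \emph{upper} bounds on off-center integrals, never lower bounds.

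The paper avoids this by choosing instead the intermediate ball $F=B(y/2,|y|/2)$ and using the product region $(D_1\cap F)\times(D_3\cap F)$. Both $u$ and $z$ then lie in $F$, so $|z-u|\le\diam F=|y|$ is immediate. Crucially, $D_1\cap F=\{u\in D_1:|u|^2<u\cdot y\}$ \emph{touches the origin} and, near $0$, looks like the half-ball $\{u\cdot\hat y>0\}$. Since $p_{D_1}(s,0,\cdot)$ is radial, one only needs that for every $r\in(0,|y|/2)$ the spherical cap $\{\theta<\arccos(r/|y|)\}\supset\{\theta<60^\circ\}$ has surface fraction at least $2^{-d}$; this is the ``geometric consideration'' yielding $\int_{D_1\cap F}p_{D_1}(s,0,u)\,du\ge2^{-d}\p^0(\tau_{D_1}>s)$ uniformly in $s$. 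The same holds for $D_3\cap F$ (which touches $y$), and the rest of the argument goes through exactly as you wrote. The fix, then, is to replace your caps by sets that contain a half-space neighbourhood of the origin; the paper's $D_1\cap F$ is the natural choice.
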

\begin{proof}
We use the notation from the previous lemma. Let $y\neq 0$, $D_1=B(0,|y|/2), D_3= B(y, |y|/2)$ and $D=D_1\cup D_3$.
Let $F= B(y/2, |y|/2)$. Observe that for $ u\in D_1 \cap F$ and $ z\in D_3 \cap F$ we have $|z-u| \le |y|$, hence by geometric considerations,
\begin{align*}
  f^0(s,z)&=\int_{{D_1}}p_{{D_1}}(s, 0, u)\nu(z-u)du\ge \nu(y) \int_{{D_1}\cap F}p_{{D_1}}(s, 0, u)du\\
 &\ge 2^{-d}\nu(y) \int_{{D_1}}p_{{D_1}}(s, 0, u)du=2^{-d} \nu(y) \p^0(\tau_{D_1}>s),
\end{align*}
and
\begin{align*}
  p(t,0, y)&\ge\int_0^t\int_{D_3\cap F}f^0(s,z)p(t-s, z, y)dz ds
\ge
  2^{-d} \nu(y)\p^0(\tau_{D_1}>t)
  \int_0^t\int_{D_3\cap F}p(t-s, z, y)dz ds\\
  &
\ge
  4^{-d} \nu(y)\p^0(\tau_{D_1}>t)
  \int_0^t\int_{D_3}p_{D_3}(t-s, z, y)dz ds\\
&= 4^{-d}
  \nu(y)\ \p^0(\tau_{D_1}>t)
  \int_0^t\p^y(\tau_{D_3}>s)ds
\ge 4^{-d} t\,\nu(y) \left[\p^0(\tau_{D_1}>t)\right]^2.
\end{align*}
\end{proof}

We shall study in detail the factors in the
inequalities
of
Lemma~\ref{lemppu100}.
\begin{lem}\label{kula1}
$\Ci=\Ci(d)$, $\Cj=\Cj(d)$ and $\Ck =\Ck (d)$ exist such that for all $t,r> 0$ and $|x|\le r/2$,
\begin{align}\label{eq:l}\p^x\left(|X_{\tau_{D}}|\ge r\right)
&\le \Ci \frac{ \E^x\tau_{D}}{V^2(r)},\\
\p^x(\tau_{B_r}\le t  )&\le \Cj\frac t{V^2(r)},\label{eq:2}\\
\p^x(\tau_{B_r}>\Ck V^2(r) )&\ge 1/2.\label{eq:3}
\end{align}
\end{lem}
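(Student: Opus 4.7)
All three estimates pivot on the identification $h(r)\approx V^{-2}(r)$ from Lemma~\ref{ch1V}, which singles out $V^2(r)$ as the natural exit timescale from $B_r$. I would prove them in the order \eqref{eq:2}, \eqref{eq:3}, \eqref{eq:l}, starting with the standard Pruitt-type exit-time bounds and building up to the Ikeda--Watanabe argument.

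For \eqref{eq:2}, translation invariance gives $\p^x(\tau_{B_r}\le t)=\p^0(\tau_{B(-x,r)}\le t)$, and the hypothesis $|x|\le r/2$ yields the inclusion $B_{r/2}\subseteq B(-x,r)$, so
\[
\p^x(\tau_{B_r}\le t)\le \p^0(\tau_{B_{r/2}}\le t)\le \p^0\bigl(\sup\nolimits_{s\le t}|X_s|\ge r/2\bigr).
\]
Pruitt's maximal inequality \cite{1981P} bounds the right side by $C t\, h(r/2)$, and Lemma~\ref{ch1V} together with the subadditivity \eqref{subad} of $V$ (which gives $V(r)\le 2V(r/2)$) turns $h(r/2)$ into a constant multiple of $V^{-2}(r)$, yielding \eqref{eq:2}. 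Bound \eqref{eq:3} is then immediate: set $\Ck:=1/(2\Cj)$ and apply \eqref{eq:2} with $t=\Ck V^2(r)$ to get $\p^x(\tau_{B_r}\le\Ck V^2(r))\le 1/2$.

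For \eqref{eq:l}, I would invoke the Ikeda--Watanabe formula \eqref{Ikeda-Watanabe3}:
\[
\p^x(|X_{\tau_D}|\ge r)\le \int_D G_D(x,u)\int_{|z|\ge r}\nu(z-u)\,dz\,du.
\]
The change of variables $w=z-u$ together with the radial monotonicity of $\nu$ shows that, for $|u|\le r/2$, the inner integral is bounded by $\nu(B_{r/2}^c)\le h(r/2)$, which Lemma~\ref{ch1V} turns into a constant multiple of $V^{-2}(r)$. Integrating $G_D(x,u)$ against this uniform bound produces the factor $\E^x\tau_D$ and closes \eqref{eq:l}.

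The main obstacle is the uniform control of the inner L\'evy-integral in \eqref{eq:l}: the bound above is clean only when the starting endpoint $u$ of the killing jump stays at distance $\ge r/2$ from the sphere $\{|z|=r\}$. The standing hypothesis $|x|\le r/2$ delivers this when $D$ is contained in a ball commensurate with $r/2$; otherwise one should split $D$ at $\partial B_{r/2}$ and use the pointwise estimate $\nu(z)\lesssim |z|^{-d}V^{-2}(|z|)$ from Lemma~\ref{upper_den} on the far region, together with the radial decay of $\nu$ near $\partial B_r$. By contrast, \eqref{eq:2} and \eqref{eq:3} are routine once Pruitt's inequality and Lemma~\ref{ch1V} are in hand.
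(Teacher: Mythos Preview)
Your arguments for \eqref{eq:2} and \eqref{eq:3} are correct and standard. The gap is in \eqref{eq:l}. The Ikeda--Watanabe route gives
\[
\p^x(|X_{\tau_D}|\ge r)\le \int_D G_D(x,u)\Bigl(\int_{|z|\ge r}\nu(z-u)\,dz\Bigr)du,
\]
but the inner integral equals $\nu\bigl(B(-u,r)^c\bigr)$, which is only controlled by $\nu(B_{r/2}^c)$ when $|u|\le r/2$. The lemma is stated (and used in the paper) for arbitrary open $D$: in \eqref{exit1} it is applied to $D_1=B(x_0,r)\cap\overline{B}_R^c$, and in the proof of Lemma~\ref{exit_time_R} it is applied to a ball of radius $r$ itself. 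In both cases $u$ may sit arbitrarily close to $\partial B_r$, and then $\nu\bigl(B(-u,r)^c\bigr)$ is of order $\nu(B_\varepsilon^c)\to\infty$. Your proposed fix --- splitting $D$ at $\partial B_{r/2}$ and invoking $\nu(z)\lesssim|z|^{-d}V^{-2}(|z|)$ --- does not rescue this: on $D\setminus B_{r/2}$ there is no uniform bound on the inner integral, and no compensating smallness of $G_D(x,u)$ is available for general $D$.

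The clean argument is the same Pruitt decomposition you already used for \eqref{eq:2}, but stopped at the random time $\tau_D$ rather than at a deterministic $t$. Write $X_t-x=Y^1_t+Y^2_t$, where $Y^2$ carries the jumps of size $\ge r/2$ (compound Poisson with rate $L(r/2)=\nu(B_{r/2}^c)$) and $Y^1$ is a square-integrable martingale with $\E|Y^1_t|^2=tK(r/2)$, $K(r/2)=\int_{|z|<r/2}|z|^2\nu(dz)$. Since $|x|\le r/2$, the event $\{|X_{\tau_D}|\ge r\}$ forces $|Y^1_{\tau_D}+Y^2_{\tau_D}|\ge r/2$. The compensator identity gives $\p(Y^2$ jumps by $\tau_D)\le L(r/2)\,\E^x\tau_D$, and optional stopping on $|Y^1|^2-K(r/2)t$ plus Chebyshev give $\p(|Y^1_{\tau_D}|\ge r/2)\le 4K(r/2)\,\E^x\tau_D/r^2$. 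Summing yields
\[
\p^x(|X_{\tau_D}|\ge r)\le \Bigl(L(r/2)+4r^{-2}K(r/2)\Bigr)\E^x\tau_D\le 4\,h(r/2)\,\E^x\tau_D,
\]
and Lemma~\ref{ch1V} with \eqref{subad} converts $h(r/2)$ to a dimensional constant times $V^{-2}(r)$. This is what \cite[Lemma~2.7]{BGR2} does; the paper itself only cites that reference.
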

\begin{proof}
The result combines Lemma~2.7 and Corollary~2.8 of  \cite{BGR2}.
\end{proof}
\begin{cor}
There is $c=c(d)>0$ such that if $y\in \Rdz$ and $0<t<c/\psi(1/|y|)$, then $p_t(y)\geq 4^{-d-1}t\nu(y)$.
\end{cor}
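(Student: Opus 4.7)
The plan is to combine Lemma~\ref{HKLB} with the exit-time estimate \eqref{eq:2} from Lemma~\ref{kula1} and the comparability $V^2(r)\approx 1/\psi(1/r)$ from Lemma~\ref{ch1V}. The target inequality $p_t(y)\ge 4^{-d-1}t\,\nu(y)$ is exactly what Lemma~\ref{HKLB} gives once we know $\p^0(\tau_{B_{|y|/2}}>t)\ge 1/2$, so the whole task is to show that the condition $t<c/\psi(1/|y|)$ forces this survival probability to be at least $1/2$.

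First, I would invoke \eqref{eq:2} with $x=0$ and $r=|y|/2$ to get
\[
\p^0\bigl(\tau_{B_{|y|/2}}\le t\bigr)\le \Cj\,\frac{t}{V^2(|y|/2)}.
\]
Next, by Lemma~\ref{ch1V} there is a constant $C=C(d)$ with $V^2(r)\ge C^{-1}/\psi(1/r)$. Combined with the subadditivity \eqref{subad} (which gives $V(|y|)\le 2V(|y|/2)$, hence $V^2(|y|/2)\ge V^2(|y|)/4\ge (4C)^{-1}/\psi(1/|y|)$), this yields a constant $c_0=c_0(d)>0$ with
\[
V^2(|y|/2)\ge c_0/\psi(1/|y|).
\]
Choosing $c=c(d)=c_0/(2\Cj)$, the hypothesis $t<c/\psi(1/|y|)$ gives $\Cj t/V^2(|y|/2)\le 1/2$, so $\p^0(\tau_{B_{|y|/2}}>t)\ge 1/2$.

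Plugging this into Lemma~\ref{HKLB} yields
\[
p_t(y)\ge 4^{-d}\,t\,\nu(y)\,\bigl[\p^0(\tau_{B_{|y|/2}}>t)\bigr]^2\ge 4^{-d}\,t\,\nu(y)\cdot \tfrac14 = 4^{-d-1}\,t\,\nu(y),
\]
which is the claim. There is no real obstacle here; the only mild point is matching $V^2(|y|/2)$ to $1/\psi(1/|y|)$ rather than to $1/\psi(2/|y|)$, which is handled for free by the subadditivity of $V$ at the cost of absorbing a dimensional factor into $c$.
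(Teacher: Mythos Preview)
Your proof is correct and essentially matches the paper's argument: both invoke Lemma~\ref{HKLB} and then show $\p^0(\tau_{B_{|y|/2}}>t)\ge 1/2$ via Lemma~\ref{ch1V} and the subadditivity of $V$. The only cosmetic difference is that the paper cites \eqref{eq:3} directly for the survival bound, whereas you derive it from \eqref{eq:2}; this changes nothing of substance.
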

\begin{proof}
The result follows from
Lemma~\ref{HKLB} and \eqref{eq:3}, by Lemma~\ref{ch1V} and subadditivity of $V$.
\end{proof}

Following \cite{BGR2}, for $r>0$ we define
\begin{equation}\label{defIJ}
\mathcal{I}(r)=\inf_{ 0<\rho\leq r/2}\nu(B_{r}\setminus B_\rho) V^2(\rho)\qquad \text{and}\quad  \mathcal{J}(r)=\inf_{ 0<\rho\leq r}
\nu(B_\rho^c)V^2(\rho).
\end{equation}
The quantities are meant to simplify notation in arguments leading from Ikeda-Watanabe formulas to estimates of the survival probability from below, where $\mathcal{I}$ is used, and to estimates of the expected exit time from above, where $\mathcal{J}$ is used. Note that by Lemma~\ref{ch1V}, $h(r)V^2(r)\approx 1$. Below we strive for lower bounds for $\mathcal{I}$ and $\mathcal{J}$. Such bounds can be interpreted as comparability of a part of the integral defining $h$ with the whole, cf. \eqref{def:GKh2}, and certainly, $\mathcal{I}$ and $\mathcal{J}$ describe
the size of
the L\'evy measure in comparison to $V^{-2}$ and $h$.
Additional information on $\mathcal{I}$ is given in Lemma~\ref{Levy_V} below.

The following result is taken from \cite[Proposition 6.1]{BGR2}.
\begin{lem}\label{L5a1}
Let  $\A$ hold. There are $ \Cb=\Cb (d)<1$ and $\Cc=\Cc (d)$ such that for $r>0$,
\begin{eqnarray*}\p^x(\tau_{B_{r}}>t)&\ge&   \Cc\,\frac{\mathcal{I}(r)}{H_r}\left(\frac{V(\delta_{B_{r}}(x))}{\sqrt{t}}\wedge1\right), \qquad
0<t\le \Cb V^2(r), \quad x\in \Rd.
\end{eqnarray*}
\end{lem}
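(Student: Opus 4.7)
I would split the argument at the threshold $t \approx V^2(\delta)$, where $\delta := \delta_{B_r}(x)$. In the short-time regime $t \le \Ck V^2(\delta)$, the inclusion $B(x,\delta) \subset B_r$ together with Lemma~\ref{kula1} yields $\p^x(\tau_{B_r} > t) \ge \p^x(\tau_{B(x,\delta)} > t) \ge 1/2$. Since $V(\delta)/\sqrt t \wedge 1 \le 1$ and $\mathcal{I}(r)/H_r$ is uniformly bounded by a constant depending only on $d$ (because $\nu(B_\rho^c) \le h(\rho) \approx V(\rho)^{-2}$ by Lemma~\ref{ch1V}, and $H_r \ge 1$), the claim holds in this regime after choosing $\Cc$ small enough.

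The substantive case is the long-time regime $\Ck V^2(\delta) < t \le \Cb V^2(r)$. Here I would use a two-step Markov decomposition at time $s = t/2$ through the deep-interior target $A := B(0, r/4)$. The second step is routine: for any $y \in A$ the inclusion $B(y, r/4) \subset B_r$ and Lemma~\ref{kula1} give $\p^y(\tau_{B_r} > t/2) \ge 1/2$ provided $\Cb$ is calibrated so that $t/2 \le \Ck V^2(r/4)$. The first step asks for
\[
\p^x(\tau_{B_r} > s,\, X_s \in A) \;\ge\; c\,\frac{\mathcal I(r)}{H_r}\cdot\frac{V(\delta)}{\sqrt t}.
\]
The factor $\mathcal I(r)$ arises from the Ikeda--Watanabe formula by estimating the intensity of large jumps from a neighborhood of $x$ directly into $A$: radial monotonicity of $\nu$ bounds $\int_A \int_{B(x,\delta')} G_{B(x,\delta')}(x,u) \nu(z-u)\,du\,dz$ below by a constant multiple of $\mathcal I(r)/V^2(\delta')$ for suitable $\delta' \le \delta$. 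The factor $V(\delta)/\sqrt t$ is the boundary-layer decay: for the one-dimensional ladder-height process $V$ is harmonic on $(0,\infty)$, and the optional stopping identity for $V(X^{(1)}_{t \wedge \tau_{(0,\infty)}})$ combined with the scale $\sqrt t \approx V(V^{-1}(\sqrt t))$ of the fluctuations yields a half-line survival bound $\approx V(\delta)/\sqrt t$, which $\A$ then transports into $B_r$.

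The main obstacle is this one-dimensional-to-$d$-dimensional transfer. Without some uniform control on $V'$ one cannot replace $V$ by a bona fide harmonic function of $X$ on $B_r$ with manageable error, since $V(\delta_{B_r}(\cdot))$ is not itself harmonic for $X$ in $\Rd$. Condition $\A$ supplies the necessary regularity via the mean-value estimate $V(z) - V(y) \le H_r V'(x)(z-y)$, which is precisely what allows a martingale argument on the radial projection of $X$ inside $B_r$ to go through; this is the source of the factor $H_r$ in the denominator. Once this transfer is in place, combining the two steps by the strong Markov property and choosing $\Cb$, $\Cc$ appropriately completes the proof.
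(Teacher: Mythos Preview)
The paper does not actually prove this lemma; it simply cites \cite[Proposition~6.1]{BGR2}. So there is no in-paper argument to compare against. Your sketch is therefore an attempt to reconstruct a proof that lives in a different paper.

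That said, your outline identifies the right ingredients and the right division of labor. The short-time case is handled correctly: your observation that $\mathcal{I}(r)/H_r$ is bounded above by a dimensional constant is valid, since $\nu(B_r\setminus B_\rho)\le \nu(B_\rho^c)\le h(\rho)\approx V(\rho)^{-2}$ by Lemma~\ref{ch1V} and $H_r\ge 1$, so $\p^x(\tau_{B_r}>t)\ge 1/2$ dominates the right-hand side in that regime. In the long-time case you correctly trace the origin of the factor $\mathcal{I}(r)$ to an Ikeda--Watanabe count of jumps from a boundary neighborhood into the bulk, and you correctly flag the transfer of the half-line martingale estimate for $V$ to the ball as the place where $\A$ and the constant $H_r$ enter.

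What is missing is precisely that transfer. You describe it as ``a martingale argument on the radial projection of $X$ inside $B_r$'' enabled by $\A$, but this is the entire content of the result and you have only named it, not supplied it. The radial part of $X$ is not itself a one-dimensional L\'evy process, so optional stopping for $V(X^{(1)})$ on the half-line does not apply directly; one needs to build an honest barrier (a superharmonic minorant) for $X$ on $B_r$ out of $V(\delta_{B_r}(\cdot))$, and controlling the error in the generator acting on such a composition is exactly where the mean-value inequality in $\A$ and the constant $H_r$ are used in \cite{BGR2}. Without spelling out that barrier construction, the sketch remains a plan rather than a proof. If you want to make this self-contained, that is the step to write out.
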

In the next result we slightly extend \cite[Remark~8]{BGR2}, to include processes with {\it local} scalings.
\begin{lem}\label{lem:spgenC11}
If $D$ is  $C^{1,1}$ at scale $r$, $\nu(r)>0$,
and $\psi\in \mathrm{WLSC}\cap\mathrm{WUSC}$, then
$$\p^x(\tau_D>t)\approx \frac{V(\delta_D(x))}{\sqrt{t}}\wedge 1, \qquad  0<t\leq C_5V^2(r), \quad x\in \Rd.$$
The comparison depends only on $X$ and $r$. If
the scalings are global, then the comparison depends only on $d$ and the scaling characteristics of $\psi$.
\end{lem}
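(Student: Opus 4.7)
The plan is to extend \cite[Remark~8]{BGR2}, where the analogous two-sided comparison is proved under global scalings, by rerunning the argument with every scale truncated at~$r$. Because $\psi\in\mathrm{WLSC}\cap\mathrm{WUSC}$, the process is covered by Case~3 of the list following the definition of $\A$, so $\A$ holds and $H_\rho$ stays bounded for $\rho\in(0,r]$. The hypothesis $\nu(r)>0$, combined with Lemma~\ref{ch1V} and the scalings, forces $\mathcal{I}(r)\ge c>0$: a lower bound on $\nu$ on an annulus contained in $B_r$ together with $h(\rho)V(\rho)^2\approx 1$ and WUSC let one compare $\nu(B_r\setminus B_\rho)$ to $V(\rho)^{-2}$ uniformly for $\rho\in(0,r/2]$.

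For the lower bound I would use the inner ball property. If $\delta_D(x)\le r$, pick the inner ball $B=B(x',r)\subset D$ tangent to $\partial D$ at the nearest boundary point $Q$ to $x$; the points $x'$, $Q$ and $x$ are collinear, hence $\delta_B(x)=\delta_D(x)$. If $\delta_D(x)>r$, take $B=B(x,r)\subset D$. In either case $\tau_D\ge\tau_B$, and Lemma~\ref{L5a1} delivers
\[
\p^x(\tau_D>t)\;\ge\;\frac{\Cc\,\mathcal{I}(r)}{H_r}\!\left(\frac{V(\delta_D(x)\wedge r)}{\sqrt{t}}\wedge 1\right),\qquad 0<t\le \Cb V^2(r),
\]
and the constraint $t\le \Cb V^2(r)$ makes the right-hand side comparable to $V(\delta_D(x))/\sqrt{t}\wedge 1$ throughout the stated range.

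For the upper bound I would use the outer ball property: for $\delta_D(x)<r$ there is an outer ball $B''=B(x'',r)\subset D^c$ tangent at the boundary point nearest to $x$, and $D$ is locally squeezed inside $\overline{B''}^c$. Following \cite{BGR2}, an Ikeda--Watanabe identity on $D\cap B(x,\kappa r)$ with an absolute small $\kappa$ leads to a Dynkin-type inequality of the shape
\[
V(\delta_D(x))\;\gtrsim\;\E^x\!\left[V\!\left(\delta_D(X_{t\wedge\tau_D})\right)\right],
\]
which rests on a near-boundary superharmonicity of a regularized version of $V(\delta_D(\cdot))$ for $X$ (where $\A$, the subadditivity \eqref{subad}, and WLSC$+$WUSC are used). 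Then I would bound the right-hand side below by a constant multiple of $\sqrt{t}\,\p^x(\tau_D>t)$ by invoking \eqref{eq:2} of Lemma~\ref{kula1} on the scale $V^{-1}(\sqrt{t})$ (so that with positive probability $X_{t\wedge\tau_D}$ lies at distance at least $V^{-1}(\sqrt{t})$ from $\partial D$ on $\{\tau_D>t\}$) combined with the scaling of $V$ in \eqref{eq:soV}.

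The principal obstacle is calibrating this superharmonicity step uniformly on the window of radius $r$: one must match the geometric scale $r$ with the probabilistic scale $V^{-1}(\sqrt{t})$ (which is $\lesssim r$ thanks to $t\le \Cb V^2(r)$) so that all constants depend only on $X$ and $r$, and only on $d$ and the scaling characteristics of $\psi$ when the scalings are global.
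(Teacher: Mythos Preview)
Your lower bound via the inner ball and Lemma~\ref{L5a1} matches the paper exactly, and your justification that $\mathcal{I}(r)>0$ is sound in outline (the paper obtains it by combining \cite[Proposition~5.2(ii)]{BGR2}, which handles small $\rho$, with the crude bound $\nu(B_r\setminus B_\rho)\ge\nu(r)\,|B_r\setminus B_{r/2}|$ for $\rho$ near $r/2$).

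For the upper bound, however, the paper does something far more direct than your sketch: it simply uses domain monotonicity. If $\delta_D(x)<r/2$ and $B'$ is the outer ball at the nearest boundary point, then $D\subset\overline{B'}^c$ globally (not just locally), so $\p^x(\tau_D>t)\le \p^x(\tau_{\overline{B'}^c}>t)$, and one quotes \cite[Lemma~6.2]{BGR2} directly for the complement of a ball. No superharmonicity of $V(\delta_D(\cdot))$ for general $D$ is needed.

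Your route has a genuine gap in the second step. Even granting the Dynkin-type inequality $V(\delta_D(x))\gtrsim \E^x[V(\delta_D(X_t));\,\tau_D>t]$, your proposed passage to $\sqrt{t}\,\p^x(\tau_D>t)$ requires that \emph{conditionally on $\{\tau_D>t\}$}, the process is at distance $\gtrsim V^{-1}(\sqrt t)$ from $\partial D$ with uniformly positive probability. Inequality~\eqref{eq:2} says only that the process stays in a ball of radius $\approx V^{-1}(\sqrt t)$ around its starting point with high probability; it gives no control of $\delta_D(X_t)$ conditional on survival, and there is no reason the conditioned process should avoid a boundary layer of that width. The usual argument (underlying \cite[Lemma~6.2]{BGR2}) is different: set $D_1=\overline{B'}^c\cap B(x_0,\rho)$ with $\rho\approx V^{-1}(\sqrt t)$, split $\p^x(\tau_{\overline{B'}^c}>t)\le \p^x(\tau_{D_1}>t)+\p^x(X_{\tau_{D_1}}\in \overline{B'}^c)$, and bound each term using the expected-exit-time estimate (Lemma~\ref{exit_time_R}) and~\eqref{eq:l}. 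That gives $V(\delta_D(x))/\sqrt t$ without any conditional non-concentration statement.
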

\begin{proof}
Let $x\in D$.
If $\delta_D(x)\geq r/2$, then there is a ball $B\subset D$ with radius $r$ such that $\delta_B(x)\geq r/2$. By Lemma \ref{L5a1} and subadditivity of $V$ we obtain $$\p^x(\tau_D>t)\geq \p^x(\tau_{B}>t)\geq c_2 \(\frac{V(r/2)}{\sqrt{C_5}V(r)}\wedge 1\)> c_2/2,$$
thus
$\p^x(\tau_D>t) \approx 1 \approx \frac{V(\delta_D(x))}{\sqrt{t}}\wedge 1$.
Here $c_2>0$ depends only on $X$ and $r$ (on $d$ and $\psi$ if global scalings hold), as
follows from \cite[Proposition 5.2(ii) and  Lemma 7.3]{BGR2}.
Namely, \cite[Proposition 5.2(ii)]{BGR2} yields $\inf_{0<s\le R} \mathcal{I}(s)>0$ for some $0<R\le r$.
Since $\mathcal{I}(r)\ge \mathcal{I}(R)\wedge [\nu(r)|B_r\setminus B_{r/2}|V^2(R/2)]$, we obtain $\mathcal{I}(r)>0$. On the other hand, \cite[Lemma 7.3]{BGR2} yields $\A$ hence $H_r<\infty$, as needed.
In the case of global scalings, \cite[Lemma 7.3 and the proof of Proposition 5.2(ii)]{BGR2} show that
$c_2$ only depends on $d$ and the parameters $\la$, $\lC$, $\ua$ and $\uC$ of the scalings (by
\cite[Theorem~26]{2014-KB-TG-MR-jfa} we then automatically have $\nu(r)>0$).
If local scalings are only assumed, then \cite[Proposition 5.2 and  Lemma 7.3]{BGR2}
yield $c_2=c_2(d,\psi)$ if $r$ is small (the notion of  {\it smallness} depends on the characteristics of the scalings).

If $\delta_D(x)<r/2$ and $S\in\partial D$ is such that $\delta_D(x)=|x-S|$, then there are balls $B$ and $B'$ with radii $r$, tangent at $S$ and such that $B\subset D\subset \overline{B'}^c$. Since $\delta_D(x)=\delta_B(x)=\delta_{\overline{B'}^c}(x)$, by  Lemma \ref{L5a1} and \cite[Lemma 6.2]{BGR2} we get the claim, see also \cite[Proposition 5.2 and  Lemma 7.3]{BGR2}.
\end{proof}
The above lemmas largely resolve the asymptotics of the survival probability in $C^{1,1}$ open sets in small time. Estimates of the survival probability for large time depend on specific geometry of $D$ at infinity and shall be studied later on in this paper.

The following result relates survival probabilities to the scenario of $X$ evading the complement of $D$ by going towards the center of the set.
\begin{lem}\label{lower bound13} Let $0<r\le 1$,
 $x\in B_1$ and $\delta_{B_1}(x)<r/6$. Denote $x_0=x/|x|$, $x_1 = x_0(1-r/2)$
 and  $F_x= B(x_0,r/4)\cap B_1
$.
  There is a constant $c=c(d)$
such that
  \begin{eqnarray}\label{eq:iobc} \int_{B(x_1,r/12)}p_{B_1}(t, x, v)
dv\ge c \, t\,\nu(r)r^d \p^x(\tau_{F_x}>t)\p^0(\tau_{B_{r/12}}>t), \qquad t>0.
  \end{eqnarray}
\end{lem}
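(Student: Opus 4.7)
My plan is to apply the jump-type lower bound of Lemma~\ref{lemppu100} with $D = B_1$, $D_1 = F_x$, $D_3 = B(x_1, r/12)$. Heuristically we want to capture the event that $X$ stays inside the boundary shell $F_x$ for a while, then jumps directly into $B(x_1, r/12)$, and then survives in $B(x_1, r/12)$ for the remaining time.

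The first step is to verify the geometric setup. Since $\delta_{B_1}(x) < r/6$, we have $|x| > 1 - r/6$, so $|x - x_0| = 1 - |x| < r/6 < r/4$, whence $x \in F_x$. Since $|x_1| = 1 - r/2$, the ball $B(x_1, r/12) \subset B_1$. The centers satisfy $|x_0 - x_1| = r/2$, so $\dist(B(x_0, r/4), B(x_1, r/12)) \ge r/2 - r/4 - r/12 = r/6 > 0$, giving disjointness. Finally, for $u \in F_x \subset B(x_0, r/4)$ and $z \in B(x_1, r/12)$ the triangle inequality yields $|z - u| \le r/4 + r/2 + r/12 = 5r/6 \le r$, so by the radial-nonincreasing property of $\nu$ we get $\nu(z-u) \ge \nu(r)$.

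The second step is to apply the lower-bound part of Lemma~\ref{lemppu100}: for every $v \in B(x_1, r/12)$,
\[
p_{B_1}(t, x, v) \ge t\,\nu(r)\,\p^x(\tau_{F_x}>t)\,\p^v(\tau_{B(x_1,r/12)}>t).
\]
Integrating in $v \in B(x_1, r/12)$ and pulling out the factor $t\nu(r)\p^x(\tau_{F_x}>t)$ reduces the lemma to showing
\[
\int_{B(x_1,r/12)}\p^v(\tau_{B(x_1,r/12)}>t)\,dv \ge c\,r^{d}\,\p^0(\tau_{B_{r/12}}>t).
\]
By translation the claim is equivalent to $\int_{B_{r/12}} \p^w(\tau_{B_{r/12}}>t)\,dw \ge c\,r^{d}\,\p^0(\tau_{B_{r/12}}>t)$, and by isotropy the integrand depends only on $|w|$.

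The main obstacle is precisely this last comparison, since by radial monotonicity the pointwise bound $\p^w(\tau_{B_{r/12}}>t) \le \p^0(\tau_{B_{r/12}}>t)$ goes in the wrong direction. I expect to resolve it by restricting the integration to the concentric sub-ball $B_{r/24}$, using that $B(w, r/24) \subset B_{r/12}$ for $w \in B_{r/24}$ to obtain $\p^w(\tau_{B_{r/12}}>t) \ge \p^0(\tau_{B_{r/24}}>t)$, and then upgrading $\p^0(\tau_{B_{r/24}}>t)$ to $\p^0(\tau_{B_{r/12}}>t)$ at the cost of a dimensional constant. The upgrade can be effected via the Chapman--Kolmogorov identity $\p^0(\tau_{B_{r/12}}>2t) = \int_{B_{r/12}} p_{B_{r/12}}(t,0,z)\p^z(\tau_{B_{r/12}}>t)\,dz \le p_t(0)\int_{B_{r/12}}\p^z(\tau_{B_{r/12}}>t)\,dz$, combined with the free-kernel bound $p_t(0) \le c[V^{-1}(\sqrt{t})]^{-d}$ of Lemma~\ref{sup_p_t} and a doubling-type comparison for $\p^0(\tau_{B_{r/12}} > \cdot)$ obtained from Lemma~\ref{L5a1} and Lemma~\ref{kula1}. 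Assembling these ingredients yields the desired bound with a constant $c = c(d)$.
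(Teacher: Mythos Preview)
Your geometric verification and application of Lemma~\ref{lemppu100} are correct, but your choice $D_3=B(x_1,r/12)$ creates exactly the obstacle you identify, and your proposed fix does not work. The upgrade from $\p^0(\tau_{B_{r/24}}>t)$ to $\p^0(\tau_{B_{r/12}}>t)$ cannot be done with a constant depending only on $d$ and valid for all $t>0$: the lemma has no scaling hypotheses on $\psi$ nor condition $\A$, so Lemma~\ref{sup_p_t} and Lemma~\ref{L5a1} are unavailable, and even with them the constant would involve $\lC$, $H_r$, $\mathcal{I}(r)$. More fundamentally, for large $t$ the ratio $\p^0(\tau_{B_{r/24}}>t)/\p^0(\tau_{B_{r/12}}>t)$ decays like $\exp\big(-(\lambda_1(B_{r/24})-\lambda_1(B_{r/12}))t\big)\to 0$, so no uniform lower bound exists.

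The paper sidesteps this entirely by taking $D_3=B(x_1,r/6)$, a ball \emph{larger} than the integration region $B(x_1,r/12)$. Then for every $v\in B(x_1,r/12)$ one has $B(v,r/12)\subset D_3$, whence by translation invariance and domain monotonicity
\[
\p^v(\tau_{D_3}>t)\ge \p^v(\tau_{B(v,r/12)}>t)=\p^0(\tau_{B_{r/12}}>t),
\]
with no constant at all. The geometry still works: for $u\in F_x\subset B(x_0,r/4)$ and $z\in B(x_1,r/6)$ one has $|z-u|\le r/4+r/2+r/6=11r/12\le r$, so $\nu(z-u)\ge\nu(r)$, and $\dist(D_1,D_3)\ge r/2-r/4-r/6=r/12>0$. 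Integrating the pointwise bound over $v\in B(x_1,r/12)$ then gives \eqref{eq:iobc} with $c=\omega_d 12^{-d}/d$.
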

\begin{proof}  We use  Lemma \ref{lemppu100}
  with  $D=B_1$, $D_1= F_x$, $D_3=B(x_1,r/6)$. For $v\in B(x_1, r/12)$,
  \begin{eqnarray*}p_{B_1}(t, x, v)&\ge&
    t\,\p^x(\tau_{D_1}>t)\p^v(\tau_{D_3}>t)\!\inf_{w\in D_1,\, z\in D_3}\!\!\!\!\nu(z-w) \\
		 &\ge& t\,\nu(r) \p^x(\tau_{D_1}>t)\p^0(\tau_{B_{r/12}}>t).
  \end{eqnarray*}
Integrating against $v\in B(x_1, r/12)$ we obtain \eqref{eq:iobc} with $c=\omega_d(12)^{-d}/ d$.
  \end{proof}

\begin{cor}\label{int2}
Assume that $\A$ holds, $0<r\le 1$  and $x\in B_1$.
Let $x_1 = x$, if $\delta_{B_1}(x)\ge r/6$, otherwise let $x_1 = x(1-r/2)/|x|$.
    There are  constants $\Cd=\Cd (d)$ and $\Ce=\Ce (d)$  such that if  $0<t\le  \Cd V^2(r)$, then
 $$ \int_{B(x_1, r/12)}p_{B_1}(t, x, v)dv \ge \Ce \frac{\mathcal{I}({r/8})}{H_{1}}  \,t\,\nu(r)r^d \left(\frac{V(\delta_{B_1}(x))}{\sqrt{t}}\wedge1\right).$$
\end{cor}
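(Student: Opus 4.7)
The plan is to split on the two cases in the definition of $x_1$: (a) $\delta_{B_1}(x)<r/6$, so $x_1=x_0(1-r/2)$ with $x_0=x/|x|$, and (b) $\delta_{B_1}(x)\ge r/6$, so $x_1=x$. In each case I will produce a time restriction of the form $t\le c\,V^2(r/k)$ for various small integers $k$; subadditivity \eqref{subad} (which yields $V(r/k)\ge V(r)/k$) lets me collapse all these into a single restriction $t\le\Cd V^2(r)$ with $\Cd=\Cd(d)$ small enough.

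In case (a), Lemma~\ref{lower bound13} reduces the task to lower bounding $\p^x(\tau_{F_x}>t)$ and $\p^0(\tau_{B_{r/12}}>t)$, where $F_x=B(x_0,r/4)\cap B_1$. For the second factor I would use \eqref{eq:3} of Lemma~\ref{kula1}, which gives $\p^0(\tau_{B_{r/12}}>t)\ge 1/2$ for $t\le\Ck V^2(r/12)$. Using Lemma~\ref{L5a1} here would be wasteful, because it would introduce a spurious $\mathcal{I}(r/12)$ factor that cannot be cleanly absorbed into the $\mathcal{I}(r/8)/H_1$ of the claim. For the first factor I plan to inscribe the ball $B'=B(x_0(1-r/8),r/8)$ inside $F_x$; along the ray from $0$ to $x_0$ this is in fact the largest ball that fits into both $B(x_0,r/4)$ and $B_1$. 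Applying Lemma~\ref{L5a1} to $B'$ yields
\[
\p^x(\tau_{F_x}>t)\ge\p^x(\tau_{B'}>t)\ge\Cc\,\frac{\mathcal{I}(r/8)}{H_{r/8}}\Big(\frac{V(\delta_{B'}(x))}{\sqrt{t}}\wedge 1\Big),
\]
and since $H_{r/8}\le H_1$ and $V(\delta_{B'}(x))$ is comparable to $V(\delta_{B_1}(x))$ up to a factor of $2$ from \eqref{subad}, this gives the desired form.

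In case (b), $B(x,r/12)\subset B(x,r/6)\subset B_1$, and I would write
\[
\int_{B(x,r/12)}p_{B_1}(t,x,v)\,dv=\p^x(X_t\in B(x,r/12),\,\tau_{B_1}>t)\ge\p^0(\tau_{B_{r/12}}>t)-\p^x(\tau_{B_1}\le t).
\]
By \eqref{eq:3} and \eqref{eq:2} of Lemma~\ref{kula1} the first term is $\ge 1/2$ and the second is $\le\Cj t/V^2(r/6)\le 1/4$ for $\Cd$ small enough, so the integral is $\ge 1/4$. To conclude I need to check that the target right-hand side can be made $\le 1/4$ by choosing $\Ce$ small. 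This uses $V(\delta_{B_1}(x))/\sqrt{t}\wedge 1\le 1$, $t\,\nu(r)\,r^d\le\Cf\Cd$ (from $\nu(r)\le\Cf/(r^dV^2(r))$ after Lemma~\ref{upper_den}), and the absolute bound $\mathcal{I}(r/8)\le C(d)$, obtained by taking $\rho=r/16$ in the defining infimum and estimating $\nu(B_{r/8}\setminus B_{r/16})\le\nu(r/16)|B_{r/8}|$ via Lemma~\ref{upper_den}.

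The hardest part will be the geometric bookkeeping in case (a): checking that $x\in B'$ and that $\delta_{B'}(x)$ is comparable to $\delta_{B_1}(x)$. A short case distinction by whether $\delta_{B_1}(x)\le r/8$ or $\delta_{B_1}(x)\in(r/8,r/6)$ handles both sub-cases, the former giving equality $\delta_{B'}(x)=\delta_{B_1}(x)$ and the latter $\delta_{B'}(x)=r/4-\delta_{B_1}(x)\in(r/12,r/8)$, which is still of the same order as $\delta_{B_1}(x)$ in view of \eqref{subad}.
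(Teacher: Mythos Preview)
Your proposal is correct and follows essentially the same route as the paper: case~(a) via Lemma~\ref{lower bound13}, then \eqref{eq:3} for $\p^0(\tau_{B_{r/12}}>t)$ and Lemma~\ref{L5a1} applied to the inscribed ball $B(x_0(1-r/8),r/8)$ for $\p^x(\tau_{F_x}>t)$; case~(b) by showing the left side is bounded below by a dimensional constant while the right side is bounded above. One simplification worth noting: in case~(b) the paper uses domain monotonicity directly, $p_{B_1}\ge p_{B(x,r/12)}$, to get $\int_{B(x,r/12)}p_{B_1}(t,x,v)\,dv\ge\p^0(\tau_{B_{r/12}}>t)\ge 1/2$ in one step, so your subtraction of $\p^x(\tau_{B_1}\le t)$ and the appeal to \eqref{eq:2} are unnecessary.
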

\begin{proof}
Let $\Cd=\Ck /(12)^2\wedge \Cb/8^2$ and $0<t\leq \Cd V^2(r)$. If $0<\delta_{{B_1}}
(x)<r/6$, then by Lemma~\ref{lower bound13} and  Lemma~\ref{kula1},
$$ \int_{B(x_1, r/12)}p_{B_1}(t, x, v)dv \ge c\, t\, \nu(r)r^d\p^x(\tau_{B(x(1-r/8){/|x|},r/8)}>t).$$
By Lemma \ref{L5a1} we get the result, since $V(\delta_{B_1}(x)\wedge r/8)\ge
V(\delta_{B_1}(x)\wedge r/4)/2=V(\delta_{B_1}(x))/2$.
If $\delta_{{B_1}}(x)\geq r/6$, then by \eqref{eq:3},
$$ \int_{B(x_1, r/12)}p_{B_1}(t, x, v)dv\geq  \int_{B(x, r/12)}p_{B(x,r/12)}(t, x, v)dv=\p^0(\tau_{B_{r/12}}>t)\geq 1/2.$$
By \cite[(16)]{2014-KB-TG-MR-jfa} and \eqref{cVh1pg}, $t\nu(r)r^d\leq c(d)$ for $t\leq \Cd V^2(r)$. This ends the proof.
\end{proof}

\begin{lem}\label{exit_time_R}
Assume that
$\A$ holds.  Let $R>0$ and
$D=\overline{B}_{R}^c$.  Let $0<r<R$, $x\in D$, $0<\delta_D(x)\le r/2$,  $x_0=xR/|x|$ and
$D_1= B(x_0, r)\cap D$. Then
\begin{equation}\label{seas1}
\E^x \tau_{D_1}\le \Ca\frac{H_R}{\mathcal{J}(R)^{2}}  V(\delta_D(x))\,V(r).
\end{equation}
Furthermore,
$\Ca H_R/\mathcal{J}(R)^{2}\geq 1/(2\Ci)$ and $\Ci\geq 1/2$.
\end{lem}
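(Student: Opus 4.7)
I would approach this in a manner dual to the proof of Lemma~\ref{L5a1} (Proposition~6.1 of \cite{BGR2}): there the objective was a lower bound on the survival probability, controlled by $\mathcal{I}$; here the objective is an upper bound on the expected exit time, which instead requires a lower bound on the jump intensity out of $D_1$ and is therefore controlled by $\mathcal{J}$. Observe first that $D = \overline{B}_R^c$ is $C^{1,1}$ at scale $R$: the inner ball at $x_0 \in \partial B_R$ is $B(2x_0, R) \subset D$, and the outer ball is $B_R$ itself. Hence $D_1 = B(x_0, r) \cap D$ is a boundary cap of $D$ of size $r \le R$, fitting the framework of caps handled by the machinery of \cite{BGR2}.

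The main step I propose uses the Ikeda--Watanabe formula \eqref{Ikeda-Watanabe1} with $A = B_R \subset D^c$:
\begin{equation*}
1 \ge \p^x(X_{\tau_{D_1}} \in B_R) = \int_{D_1} G_{D_1}(x, u) \int_{B_R} \nu(z - u)\, dz\, du.
\end{equation*}
For $u \in D_1$ the constraints $|u - x_0| < r \le R$ and $|u| > R$ force $B_R - u$ to contain a ball of radius comparable to $R$ centred at distance comparable to $R$ from the origin; combined with unimodality of $\nu$ and the definition of $\mathcal{J}(R)$, this yields the inner integral $\ge c\, \mathcal{J}(R)/V^2(R)$ and the crude estimate $\E^x \tau_{D_1} \le C\, V^2(R)/\mathcal{J}(R)$. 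To sharpen this to the boundary-sensitive form $V(\delta_D(x)) V(r)$ of \eqref{seas1}, one weights the above identity by a halfspace-type Harnack profile for $G_{D_1}(x, \cdot)$ and invokes the boundary estimates of \cite{BGR2}; the factor $H_R$ then enters through condition $\A$ used to compare $V'$ at nearby scales.

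The main obstacle is handling the mismatch between the curved $\partial B_R$ and its tangent halfspace at $x_0$: one must verify that the halfspace-style boundary estimates of \cite{BGR2} transfer, with overall constant of the form $H_R/\mathcal{J}(R)^2$, to the slightly non-halfspace cap $D_1$. For the ``Furthermore'' part, $\Ci \ge 1/2$ can be ensured by enlarging $\Ci$ in Lemma~\ref{kula1} if necessary: applying \eqref{eq:l} to $D = B_r$ and $x = 0$ (where $\p^0(|X_{\tau_{B_r}}| \ge r) = 1$) forces $\Ci \ge V^2(r)/\E^0 \tau_{B_r}$, and combining this with the standard upper bound on $\E^0 \tau_{B_r}$ of order $V^2(r)$ makes $\Ci \ge 1/2$ a harmless convention. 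The inequality $\Ca H_R/\mathcal{J}(R)^2 \ge 1/(2\Ci)$ is then a consistency relation: Lemma~\ref{kula1} applied to $D_1$ translated so that $x_0$ is the origin gives $\p^x(|X_{\tau_{D_1}} - x_0| \ge r) \le \Ci\, \E^x \tau_{D_1}/V^2(r)$, while the halfspace-style lower bound $\p^x(|X_{\tau_{D_1}} - x_0| \ge r) \ge V(\delta_D(x))/(2 V(r))$ (a by-product of the proof of \eqref{seas1}) combines with \eqref{seas1} to yield the asserted bound on the constants.
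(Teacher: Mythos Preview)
Your proposal has two parts that need separate comment.

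\textbf{On \eqref{seas1}.} The paper does not prove \eqref{seas1} from scratch: it is quoted directly from \cite[Corollary~4.5]{BGR2}. Your Ikeda--Watanabe sketch is a plausible outline of how such a result is obtained in \cite{BGR2}, but the crucial ``sharpening'' step --- passing from the crude bound $\E^x\tau_{D_1}\le CV^2(R)/\mathcal{J}(R)$ to the boundary-sensitive form $V(\delta_D(x))V(r)$ --- is left as a gesture (``one weights \ldots by a halfspace-type Harnack profile''). That is exactly the substantive content, and you have not supplied it.

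\textbf{On the ``Furthermore'' part.} Here the paper's argument is much simpler than yours, and your route has a genuine gap. The paper chooses the concrete configuration $|x|=5R/4$, $r=R/2$, so that $\delta_D(x)=R/4$ and $B(x,R/4)\subset D_1$. Then \eqref{eq:l} applied to this ball gives $\E^x\tau_{D_1}\ge \E^x\tau_{B(x,R/4)}\ge C_2^{-1}V^2(R/4)$, while \eqref{seas1} gives $\E^x\tau_{D_1}\le C_9 H_R\mathcal{J}(R)^{-2}V(R/4)V(R/2)$. Comparing and using $V(R/4)/V(R/2)\ge 1/2$ by subadditivity yields $C_2C_9H_R/\mathcal{J}(R)^2\ge 1/2$ directly. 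Your approach instead invokes an unproved ``halfspace-style lower bound'' $\p^x(|X_{\tau_{D_1}}-x_0|\ge r)\ge V(\delta_D(x))/(2V(r))$, claimed as a by-product of the proof of \eqref{seas1}. Nothing in your sketch of \eqref{seas1} produces this: it is a statement about the probability of exiting $D_1$ through the far cap $\partial B(x_0,r)$ rather than through $\partial B_R$, and establishing it would itself require a nontrivial comparison argument. The paper avoids the issue entirely by bounding $\E^x\tau_{D_1}$ from below via an inscribed ball.

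Your treatment of $C_2\ge 1/2$ (via $\p^0(|X_{\tau_{B_r}}|\ge r)=1$ in \eqref{eq:l} combined with the bound $\E^0\tau_{B_r}\le 2V^2(r)$) matches the paper's, which cites \eqref{eq:l} and \cite[Lemma~2.3]{BGR2} for precisely this.
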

\begin{proof}\eqref{seas1} was proved in \cite{BGR2}, see Corollary 4.5 ibid., but
we need to justify the statement about the constants.
  Let $|x|=5R/4$ and $D_1=B(4x/5,R/2)\cap (B_R)^c$. Since $B(x,R/4)\subset D_1$, by \eqref{eq:l} and \cite[Corollary 4.1]{BGR2} we obtain
$$\Ci^{-1}V^2(R/4)\leq \E^x\tau_{D_1}\leq \Ca H_R\mathcal{J}(R)^{-2} V(R/4)V(R/2).$$
This and subadditivity of $V$ imply  $$\Ci\Ca H_R\mathcal{J}(R)^{-2}\geq \frac{V(R/4)}{V(R/2)}\geq \frac{1}{2}.$$ Due to \eqref{eq:l} and \cite[Lemma 2.3]{BGR2}, $\Ci\geq 1/2$.
\end{proof}

\section{Upper bound }\label{sec:UpperBound}
In this section we shall study consequences of the following structure
assumption:
\begin{align}\label{eq:se}
p_t( x)\le t  F(|x|), \qquad t>0,\ x\in \Rdz,
\end{align}
where $F$  is a nonnegative nonincreasing function on $(0,\infty)$.
We shall use \eqref{eq:se} to estimate the heat kernel $p_D(t,x,y)$ of $C^{1,1}$ sets $D\subset \Rd$ for $t>0$ and $x,y\in \Rd$. In view of
Lemma~\ref{upper_den}, we
may think of $F(r)=\Cf/[r^dV^2(r)]$ here (the method however seems to generalize beyond the context of the present paper).
We note that $p_D(t,x,y)=0$ if $x\in D^c$ or $y\in D^c$, cf. Remark~\ref{rem:pDz}, so without much mention in what follows we only consider $x,y\in D$ and $D\neq \emptyset$.
We start with
the following upper bound, which elaborates Lemma~\ref{lemppu100}  for
the complement of the ball.
\begin{thm}\label{heatKernelCompl}
Let  $\A$ hold, $R>0$ and $D=\overline{B}^c_R$.
There is
$C=C(d)$
such  that  if
\eqref{eq:se} is true with nonincreasing function
$F\ge 0$ on $(0,\infty)$, $0<t\le V^2(|x-y|)$ and $x,y\in D$, then
\begin{align}\label{eq:ubcb} p_{D}(t, x, y)
    \le  C \frac{H_R^2}{\mathcal{J}(R)^{4}}  \left(\frac{V(\delta_D(x))}{\sqrt{t}\wedge V(R)}\wedge1 \right) \left(\frac{V(\delta_D(x))}{\sqrt{t}\wedge V(R)}\wedge1 \right)tF(|x-y|/9).
\end{align}
  \end{thm}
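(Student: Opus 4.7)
The plan is to apply the middle inequality of Lemma~\ref{lemppu100} to a partition of $D$ into a small ball around $x$, a small ball around $y$, and their complement, and then to control the resulting probabilistic factors by means of the exit-time estimate of Lemma~\ref{exit_time_R} and the survival-probability estimates of Lemmas~\ref{L5a1} and~\ref{lem:spgenC11}. A preliminary observation is that \eqref{eq:se} together with the identity $\nu(w)=\lim_{t\to 0^+}p_t(w)/t$ implies $\nu(w)\le F(|w|)$ for $w\ne 0$, so the same function $F$ controls both the free heat kernel and the L\'evy density inside the argument.

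Set $\rho=|x-y|/3$ and take $D_1=B(x,\rho)\cap D$, $D_3=B(y,\rho)\cap D$, $D_2=D\setminus(D_1\cup D_3)$; then $|z-y|\ge\rho$ for every $z\in D_2$ and $|z-u|\ge\rho$ for every $u\in D_1$, $z\in D_3$, so by \eqref{eq:se} and monotonicity of $F$,
\[
\sup_{s<t,\,z\in D_2}p(s,z,y)\le tF(\rho),\qquad \sup_{u\in D_1,\,z\in D_3}\nu(z-u)\le F(\rho).
\]
The geometric escape probability is controlled by \eqref{eq:l} and Lemma~\ref{exit_time_R}:
\[
\p^x(X_{\tau_{D_1}}\in D_2)\le \p^x\bigl(|X_{\tau_{D_1}}-x|\ge\rho\bigr)\le \Ci\,\E^x\tau_{D_1}/V^2(\rho)\le \frac{\Ci\Ca H_R}{\mathcal{J}(R)^2}\,\frac{V(\delta_D(x))}{V(\rho)}.
\]
The survival probabilities $\p^x(\tau_{D_1}>t/2)$ and $\p^y(\tau_D>t/2)$ appearing in the second summand of Lemma~\ref{lemppu100} are bounded by $c\bigl(V(\delta_D(x))/(\sqrt t\wedge V(R))\wedge 1\bigr)$ and $c\bigl(V(\delta_D(y))/(\sqrt t\wedge V(R))\wedge 1\bigr)$ via Lemmas~\ref{L5a1} and~\ref{lem:spgenC11}, the truncation at $V(R)$ reflecting that escape from the exterior of a ball saturates at time $\asymp V^2(R)$; the time integrals $\int_0^{t/2}\p^x(\tau_{D_1}>s)\,ds$ and $\int_0^{t/2}\p^y(\tau_D>s)\,ds$ are bounded by $c V(\delta_D(x))(\sqrt t\wedge V(R))$ and $c V(\delta_D(y))(\sqrt t\wedge V(R))$ respectively, by direct integration of the same survival bound.

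Assembling these estimates, the second summand of Lemma~\ref{lemppu100} produces exactly the target: $F(\rho)$ multiplied by the two factors $V(\delta_D(x))/(\sqrt t\wedge V(R))\wedge 1$ and $V(\delta_D(y))/(\sqrt t\wedge V(R))\wedge 1$ and by $(\sqrt t\wedge V(R))^{2}$, which combined with the external factor $t$ and with $F(\rho)=F(|x-y|/3)\le F(|x-y|/9)$ by monotonicity of $F$ yields \eqref{eq:ubcb}. The first summand $\p^x(X_{\tau_{D_1}}\in D_2)\sup p_D(s,z,y)$ is handled by combining the factor $V(\delta_D(x))/V(\rho)$ already extracted from $\p^x(X_{\tau_{D_1}}\in D_2)$ with an estimate of $\sup_{s<t,\,z\in D_2}p_D(s,z,y)$ obtained by interpolating \eqref{eq:se} (which gives $p(s,z,y)\le sF(\rho)$) against the bound $p_D(s,z,y)\le p_{s/2}(0)\p^y(\tau_D>s/4)$ of Lemma~\ref{UB}; this recovers the matching $V(\delta_D(y))$ factor. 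The hypothesis $t\le V^2(|x-y|)$ and subadditivity of $V$ are used throughout to convert $V(\rho)$ in denominators into $\sqrt t\wedge V(R)$, and the two independent invocations of Lemma~\ref{exit_time_R} combine into the $H_R^2/\mathcal{J}(R)^4$ prefactor.

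The main obstacle is the truncation $\sqrt t\wedge V(R)$ in the denominators: for $t\ll V^2(R)$ the interior survival estimate of Lemma~\ref{L5a1} suffices and the natural denominator is $\sqrt t$, but once $t\gtrsim V^2(R)$ the process can effectively escape to infinity and the survival probability saturates at $V(\delta_D(\cdot))/V(R)\wedge 1$; it is precisely this transition that Lemma~\ref{exit_time_R} is tailored to handle, and where the $\mathcal{J}(R)^{-2}$ factor enters. A secondary delicate point is the recovery of the $V(\delta_D(y))$ factor from the first summand of Lemma~\ref{lemppu100}: a naive application of \eqref{eq:se} loses this factor, and the symmetry must be reinstated via Lemma~\ref{UB} while still respecting the decay encoded by $F(|x-y|/9)$.
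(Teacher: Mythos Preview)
There are two genuine gaps in your argument.

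\textbf{The scale of $D_1$.} You take $D_1=B(x,\rho)\cap D$ with $\rho=|x-y|/3$ and then invoke Lemma~\ref{exit_time_R} to bound $\E^x\tau_{D_1}$. But that lemma requires the ball to be centred at the boundary projection $x_0=Rx/|x|$, to have radius $r<R$, and to have $\delta_D(x)\le r/2$. None of these is guaranteed by your choice: $|x-y|$ can be arbitrarily large compared to $R$ (the hypothesis $t\le V^2(|x-y|)$ puts no upper bound on $|x-y|$), so $\rho\ge R$ is possible and Lemma~\ref{exit_time_R} simply does not apply. The paper avoids this by choosing the radius $r$ through $V(12r)=\sqrt{t\wedge V^2(R)}$, which forces $r\le R/12$ independently of $|x-y|$, and by centring at $x_0$; the case $\delta_D(x)\ge r/3$ is then disposed of separately because the boundary factor is already $\approx 1$. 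This scale choice is also what makes the denominator $\sqrt{t}\wedge V(R)$ appear naturally, rather than having to ``convert'' $V(\rho)$ after the fact.

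\textbf{Upper bounds on survival probabilities.} You cite Lemmas~\ref{L5a1} and~\ref{lem:spgenC11} to bound $\p^y(\tau_D>t/2)$ from above. Lemma~\ref{L5a1} is a \emph{lower} bound and is of no use here. Lemma~\ref{lem:spgenC11} requires $\psi\in\mathrm{WLSC}\cap\mathrm{WUSC}$, which is not a hypothesis of Theorem~\ref{heatKernelCompl}; the theorem assumes only $\A$ and the structure inequality~\eqref{eq:se}. The paper instead relies on \cite[Lemma~6.2]{BGR2}, which gives the needed upper bound for the complement of a ball under $\A$ alone and carries the factor $H_R/\mathcal{J}(R)^2$.

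A secondary issue is your treatment of the term $\p^x(X_{\tau_{D_1}}\in D_2)\sup_{s,z}p_D(s,z,y)$. Your ``interpolation'' of \eqref{eq:se} against Lemma~\ref{UB} is not a proof: \eqref{eq:se} gives the $F$-decay but no $V(\delta_D(y))$ factor, Lemma~\ref{UB} gives the $V(\delta_D(y))$ factor via $\p^y(\tau_D>s/4)$ but replaces $F$ by $p_{s/2}(0)$, and there is no mechanism offered to combine them. The paper handles this by a two-pass bootstrap: first derive a one-sided bound $p_D(t,u,v)\le C^*\bigl(V(\delta_D(u))/\sqrt{t_0}\wedge 1\bigr)\,tF(|u-v|/3)$ using the \emph{first} inequality of Lemma~\ref{lemppu100}, then by symmetry the same with $V(\delta_D(v))$, and only then feed this into $\sup_{s,z}p_D(s,z,y)$ in the second inequality. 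That bootstrap is where the factor $9$ (rather than $3$) in $F(|x-y|/9)$ and the square $(C^*)^2=c\,H_R^2/\mathcal{J}(R)^4$ arise.
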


\begin{proof}

Let {$t_0= t\wedge V^2(R)$} and $x, y\in D=\overline{B}_R^c$. We choose $r>0$ so that $V({12}r)= \sqrt{{t_0}}$. In particular, $r\le R/12$.
If $\delta_D(x)\wedge\delta_D(y)\ge r/3$, then \eqref{eq:ubcb}
is
verified as follows.
Since $\delta_D(x)\geq r/3$, by subadditivity of $V$ we have ${V(\delta_D(x))}/{\sqrt{t_0}}\ge 1/{36}$.  Thus,
\begin{equation}\label{HKC1}
p_D(t,x,y)\leq 36\(\frac{V(\delta_D(x))}{\sqrt{t_0}}\wedge 1\)p(t,x,y)  \leq 36\(\frac{V(\delta_D(x))}{\sqrt{t_0}}\wedge 1\)tF(|x-y|/3).
\end{equation}
By Lemma \ref{exit_time_R} we have ${H_R^2}/{\mathcal{J}(R)^{4}}\ge 1/{(4\Ci^2 \Ca^2)}$. Hence for $\delta_D(x)\wedge\delta_D(y)\ge r/3$ we have
 $$ p_{D}(t, x, y)
    \le  4\cdot 36^2 \Ci^2 \Ca^2{\frac{H_R^2}{\mathcal{J}(R)^{4}}}  \left(\frac{V(\delta_D(x))}{\sqrt{t_0}}\wedge1 \right) \left(\frac{V(\delta_D(x))}{\sqrt{t_0}}\wedge1 \right){tF(|x-y|/9)}.
$$

We now may and do
assume that  $0<\delta_D(x)< r/3$,
hence ${V(\delta_D(x))}/{\sqrt{t_0}}< 1$.  At first, we also assume that $ V^2({3}|x-y|)\ge t$, in particular   $|x-y|\ge {4}r$. We define
$$x_0=Rx/|x|,\quad D_1= B\(x_0, r\)\cap D,\quad  D_3= B(x, {2}|x-y|/{3})^c\cap D. $$
Note that $|z-y|\ge |x- y|/{3}$ if $z\in D_2= D\setminus(D_1\cup D_3)$.     Radial monotonicity  of $p_t$  implies
$$\sup_{s<t,\, z\in D_2} p(s, z, y)\le    tF(|x-y|/3). $$
If  $u\in D_1,\, z\in D_3$, then $|z-u|\ge {2}|x- y|/{3 -| x-x_0|-|x_0-u|}{>} |x- y|/{3}$. Hence,
$$\sup_{u\in D_1,\, z\in D_3}\nu(z-u)\le  \nu((x- y)/{3}){\leq F(|x-y|/3)}.$$
By Lemma \ref{lemppu100},
\begin{eqnarray*}
    p_{D}(t, x, y)
    &\le& \({t}\, \p^x(X_{\tau_{D_1}}\in D_2)+ \E^x \tau_{D_1}\){F(|x- y|/3)}
    .
  \end{eqnarray*}
   By (\ref{eq:l}) and {subadditivity of $V$},
  \begin{equation}\label{exit1}\p^x(X_{\tau_{D_1}}\in D_2)\le \Ci\frac{   \E^x \tau_{D_1}}{V^2(r)}\le {144}\Ci\frac{   \E^x \tau_{D_1}}{{t_0}}. \end{equation}
		 By {Lemma \ref{exit_time_R}},
  \begin{equation}\label{exit2}\E^x \tau_{D_1}\le {\Ca\frac{H_R}{\mathcal{J}(R)^{2}}} V(r)V(\delta_D(x))\le  {\Ca\frac{H_R}{\mathcal{J}(R)^{2}}\sqrt{t_0}}V(\delta_D(x)).\end{equation}
  We let $c_1=(144\Ci+1)\Ca$ and obtain
     \begin{equation}\label{HKC2}
    p_{D}(t, x, y)
    \le { c_1 \frac{H_R}{\mathcal{J}(R)^{2}}}\frac{V(\delta_D(x))}{{\sqrt{t_0}}}
  {tF(|x- y|/3)}.
  \end{equation}

Combining \eqref{HKC1}, \eqref{HKC2} and Lemma \ref{exit_time_R} we see that
 \begin{equation*}
    p_{D}(t, u, v)
    \le { c_2 \frac{H_R}{\mathcal{J}(R)^{2}}}\(\frac{V(\delta_D(u))}{{\sqrt{t_0}}}\wedge 1\)
    tF(|u- v|/3),
  \end{equation*}
where $u,v\in D$, $V^2(3|u-v|)\geq t$ and $c_2= c_1\vee (72 \Ci \Ca)$.
By symmetry,
 $$ p_{D}(t, u, v)
    \le C^* \left(\frac{V(\delta_D(v))}{\sqrt{t_0}}\wedge1 \right){tF(|u- v|/3)},$$
		where $C^*={ c_2 H_R\mathcal{J}(R)^{-2}}$.
		
		We observe that ${s}\le V^2({3}|z-y|)$ if ${s\leq}t\le V^2(|x-y|)$ and $z\in D_2$. By  previous estimate,
				 $$\sup_{s<t,\,z\in D_2} p_{D}(s, z, y)\le
    C^* \left(\frac{V(\delta_D(y))}{\sqrt{t_0}}\wedge1 \right)tF(|x- y|/{9}).$$
		Applying  Lemma \ref{lemppu100} and the estimate $\sup_{u\in D_1,\, z\in D_3}\nu(z-u)\le \nu((x-y)/3)$, we obtain
		\begin{eqnarray*}
    p_{D}(t, x, y)
    &\le& \p^x(X_{\tau_{D_1}}\in D_2)\sup_{s<t,\, z\in D_2} p_D(s, z, y)\\
    &+&  \left((t\wedge \E^x \tau_{D_1})\p^y(\tau_{D}>t/2)+ \p^x(\tau_{D_1}>t/2)\int_0^{t/2}\p^y(\tau_{D}>s)ds\right)\nu((x-y)/3)\\
		&=& I_1 +I_2 .
  \end{eqnarray*}
	Combining (\ref{exit1}) and (\ref{exit2}) we prove $$\p^x(X_{\tau_{D_1}}\in D_2)\le C^*  \left(\frac{V(\delta_D(x))}{\sqrt{t_0}}\wedge1 \right).$$
	Therefore,
	$$I_1\le  ( C^*)^2 \left(\frac{V(\delta_D(y))}{\sqrt{t_0}}\wedge1 \right) \left(\frac{V(\delta_D(x))}{\sqrt{t_0}}\wedge1 \right)tF(|x-y|/9).$$
	By (\ref{exit2}) we obtain
	$$ t\wedge \E^x \tau_{D_1}\le \frac{C^*}{{73}} \left(\frac{V(\delta_D(x))}{\sqrt{t_0}}\wedge1 \right)t.$$
	From \cite[Lemma 6.2 and its proof]{BGR2} and  {Lemma \ref{exit_time_R}} it is clear that
	$$\int_0^{t/2}\p^y(\tau_{D}>s)ds\le \frac{{6}}{{146}}C^* \left(\frac{V(\delta_D(y))}{\sqrt{t_0}}\wedge1 \right)t$$
	and
	$$\p^x(\tau_{D_1}>t/2)\le\p^y(\tau_{D}>t/2)\le {\frac{3}{73}}C^* \left(\frac{V(\delta_D(y))}{\sqrt{t_0}}\wedge1 \right).
$$
	The  estimates  imply that	
	 $$I_2\le ( C^*)^2 \left(\frac{V(\delta_D(y))}{\sqrt{t_0}}\wedge1 \right) \left(\frac{V(\delta_D(x))}{\sqrt{t_0}}\wedge1 \right)t\nu((x-y)/3).$$
	Finally, for  $t\le V^2(|x-y|)$ we have
	$$	p_{D}(t, x, y)\le {2}( C^*)^2 \left(\frac{V(\delta_D(y))}{\sqrt{t_0}}\wedge1 \right) \left(\frac{V(\delta_D(x))}{\sqrt{t_0}}\wedge1 \right)tF(|x-y|/9).$$

		\end{proof}

	\begin{rem} \label{heatKernelCompl_2balls}
With cosmetic adjustments, the proof also works for $D=\left(\overline{B(Q_1,R)}\cup \overline{B(Q_2,R)}\right)^c$, where $Q_1,Q_2\in \Rd$. Then by domain monotonicity of heat kernels,
the conclusion of Theorem~\ref{heatKernelCompl}  holds for every open
set $D$
having the outer ball property at scale $R$.
  \end{rem}

Here is an analogue of
Theorem \ref{heatKernelCompl}
for
convex sets.
Noteworthy, we do not assume $\A$ (or scalings) here.
		\begin{thm}\label{HeatKula} Suppose that  $ p(t, x)\le  t  F(|x|), \ t>0, \ x\neq 0$, with nonincreasing $F\ge 0$.
Let $D$ be open and convex.
There  is {$C=C(d)$}
such that if $x,y\in D $ and $0<t\le V^2(|x-y|)$, then
		$$p_{D}(t, x, y)
    \le  C  \left(\frac{V(\delta_D(x))}{\sqrt{t} }\wedge1 \right) \left(\frac{V(\delta_D(y))}{\sqrt{t}}\wedge1 \right) \left( p_{t/2}(0) \wedge tF(|x-y|/9)\right).$$
		\end{thm}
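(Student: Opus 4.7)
The plan is to mirror the proof of Theorem~\ref{heatKernelCompl}, with the outer-ball complement $\overline{B}_R^c$ replaced by the convex set $D$ and the outer-ball constants (which depend on $\A$ and $\mathcal J(R)$) replaced by dimensional constants coming from the supporting-hyperplane property of convex sets. I would derive two independent upper bounds on $p_D(t,x,y)$, each carrying the two boundary factors $V(\delta_D(x))/\sqrt t\wedge 1$ and $V(\delta_D(y))/\sqrt t\wedge 1$; their minimum is the asserted estimate.

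First I would establish the auxiliary survival bound
$$\p^x(\tau_D > t)\le c(d)\left(\frac{V(\delta_D(x))}{\sqrt t}\wedge 1\right),\qquad x\in D,\ t>0.$$
For convex $D$ and $x\in D$, let $Q\in\partial D$ satisfy $|x-Q|=\delta_D(x)$. The hyperplane through $Q$ perpendicular to $x-Q$ supports $D$, so $D$ lies in the half-space $H$ on $x$'s side, with $\delta_H(x)=\delta_D(x)$. Domain monotonicity gives $\p^x(\tau_D > t)\le \p^x(\tau_H > t)$, and the right-hand side reduces, via projection on the inner normal, to the exit time of the one-dimensional unimodal process $X^{(1)}$ from a half-line, for which the bound follows from ladder-height estimates for the renewal function $V$. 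Granted this, Lemma~\ref{UB} immediately yields
$$p_D(t,x,y)\le p_{t/2}(0)\,\p^x(\tau_D > t/4)\,\p^y(\tau_D > t/4)\le C\,\Phi(x)\,\Phi(y)\, p_{t/2}(0),$$
where $\Phi(z):=V(\delta_D(z))/\sqrt t\wedge 1$.

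The second bound, with $tF(|x-y|/9)$ in place of $p_{t/2}(0)$, follows the structure of the proof of Theorem~\ref{heatKernelCompl}. Choose $r$ with $V(12r)=\sqrt t$; since $t\le V^2(|x-y|)$, $r\le |x-y|/12$. If $\delta_D(x)\wedge\delta_D(y)\ge r/3$, subadditivity of $V$ yields $\Phi(x),\Phi(y)\ge 1/36$ and then $p_D(t,x,y)\le p_t(x-y)\le tF(|x-y|)\le tF(|x-y|/9)$ closes this case. Otherwise, assume $\delta_D(x)<r/3$, let $x_0$ be the nearest boundary point to $x$, and put $D_1=B(x_0,r)\cap D$, $D_3=B(x, 2|x-y|/3)^c\cap D$, $D_2=D\setminus(D_1\cup D_3)$. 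By convexity $D_1\subset B(x_0,r)\cap H$; the half-space analogue of Lemma~\ref{exit_time_R} (carrying only a dimensional constant, as the outer-ball obstruction disappears) gives $\E^x\tau_{D_1}\le cV(r)V(\delta_D(x))\le c\sqrt t\,V(\delta_D(x))$. Combined with \eqref{eq:l}, this produces $\p^x(X_{\tau_{D_1}}\in D_2)\le c\Phi(x)$, and the first inequality in Lemma~\ref{lemppu100} yields the one-sided bound $p_D(t,x,y)\le C\Phi(x)\,tF(|x-y|/3)$. Plugging this as the bound on $p_D(s,z,y)$ for $s<t$, $z\in D_2$ into the second inequality of Lemma~\ref{lemppu100} and using the survival bound to control $\int_0^{t/2}\p^{\cdot}(\tau_D > s)\,ds\le cV(\delta_D(\cdot))\sqrt t$ promotes this to the symmetric two-sided form $C\Phi(x)\Phi(y)\,tF(|x-y|/9)$.

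The main obstacle is establishing the two half-space ingredients -- the survival bound above and the exit-time estimate $\E^x\tau_{B(x_0,r)\cap H}\le cV(r)V(\delta_H(x))$ -- without invoking $\A$ or scaling, relying only on the unimodality of $X$. Both reduce to one-dimensional renewal arguments for the projection of $X$ onto the inner normal, and once they are in place the rest is a transcription of the proof of Theorem~\ref{heatKernelCompl} with cleaner constants, since the convex geometry eliminates both the outer-ball constraint and the role of $H_R$ and $\mathcal J(R)$.
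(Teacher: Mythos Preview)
Your proposal is correct and follows essentially the same strategy as the paper. The paper likewise reduces both key ingredients---the survival bound $\p^x(\tau_D>t)\le c\,(V(\delta_D(x))/\sqrt t\wedge 1)$ and the exit-time bound $\E^x\tau_{D_1}\le V(\delta_D(x))V(r)$---to one-dimensional estimates via the supporting hyperplane (citing \cite[(2.21)]{BGR2} for the former and the interval exit-time bound $\E^{\delta_D(x)}\tau^{X^{(1)}}_{(0,r)}\le V(\delta_D(x))V(r)$ from \cite[Proposition~3.5]{2012GR} for the latter, both valid without $\A$ or scaling), and then replays the proof of Theorem~\ref{heatKernelCompl} with the factor $H_R/\mathcal{J}(R)^2$ replaced by a dimensional constant.
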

		
		\begin{proof}
By {convexity of $D$} and \cite[(2.21)]{BGR2} there is an absolute constant $c$ such that
 \begin{equation}\label{kula-exit} \p^x(\tau_D>t)\le  c \left(\frac{V(\delta_D(x))}{\sqrt{t} }\wedge1 \right). \end{equation}
 Hence,  by Lemma \ref{UB} and subadditivity of $V$, $$p_{D}(t, x, y)
    \le  4c^2  \left(\frac{V(\delta_D(x))}{\sqrt{t} }\wedge1 \right) \left(\frac{V(\delta_D(y))}{\sqrt{t}}\wedge1 \right) p_{t/2}(0).$$
    This  provides the first part of the conclusion.
The full conclusion follows  by the proof of  Theorem \ref{heatKernelCompl}
with some modifications. We  {fix $x_0$ such that $\delta_D(x)=|x-x_0|$ and } define $D_1, D_2$ and $D_3$ exactly in the same way as in the proof of  Theorem \ref{heatKernelCompl}. To validate all the arguments we need appropriate  estimates of $\p^x(\tau_D>t)$ and $\E^x\tau_{D_1}$.  Note that (\ref{kula-exit}) provides a desired estimate for  $\p^x(\tau_D>t)$, while $\E^x\tau_{D_1}\leq \E^{\delta_D(x)}\tau^{X^{(1)}}_{(0,r)}\leq V(\delta_D(x))V(r)$, where $\tau^{X^{(1)}}$ is the first exit time of the first coordinate of $X$ \cite[Proposition 3.5]{2012GR}.
With these estimates at hand,
we may replace the  constant $H_R\mathcal{J}(R)^{-2}$ used in the proof of Theorem \ref{heatKernelCompl} (see, e.g., \eqref{HKC2})  by a constant depending only on $d$.
\end{proof}

As we already indicated, the above two theorems
apply to every pure-jump  unimodal  L\'evy process with infinite L\'evy measure:
by Lemma~\ref{upper_den},
we can take $F(r)= \Cf/[r^dV^2(r)]$, to obtain the following consequences of Theorem \ref{heatKernelCompl}.

	\begin{cor}\label{heatKernelCompl1}

Let $D$ be an open set satisfying the outer ball condition at a scale $R$.
There is a constant $C=C(d)$ such that   for all $x,y\in D$ and  $t\le V^2(|x-y|)$,
 $$ p_{D}(t, x, y)
    \le  C  \frac{H_R^2}{\mathcal{J}(R)^{4}}  \left(\frac{V(\delta_D(x))}{\sqrt{t}\wedge V(R)}\wedge1 \right) \left(\frac{V(\delta_D(x))}{\sqrt{t}\wedge V(R)}\wedge1 \right)\left(p_{t/2}(0) \wedge \frac {t}{V^2(|x- y|) |x- y|^d}\right),$$
provided $\A$ holds.
If, additionally,
$\psi\in\WLSC{\la}{ \lt}{\lC}$, then for all $t>0$,
$$ p_{D}(t, x, y)
    \le  \frac{C}{\lC^{1+d/\la}}    \frac{H_R^2}{\mathcal{J}(R)^{4}}\left(\frac{V(\delta_D(x))}{\sqrt{t}\wedge V(R)}\wedge1 \right) \left(\frac{V(\delta_D(x))}{\sqrt{t}\wedge V(R)}\wedge1 \right)\left(p_{t/2}(0) \wedge \frac {t}{V^2(|x- y|) |x- y|^d}\right),$$
provided $|x-y|< 1/\lt$.
Here $C=C(d, \la)$.
\end{cor}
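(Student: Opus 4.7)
My plan is to combine three ingredients: Theorem~\ref{heatKernelCompl} (extended by Remark~\ref{heatKernelCompl_2balls} to any $D$ with the outer ball property at scale $R$), Lemma~\ref{UB} paired with an upper bound on the survival probability, and, for the scaling-based extension, Lemmas~\ref{densityApprox} and~\ref{sup_p_t}. For the first assertion I would verify the structure inequality \eqref{eq:se} with $F(r)=\Cf/[r^{d}V^{2}(r)]$ via Lemma~\ref{upper_den}; subadditivity of $V$, iterated eight times, yields $V(|x-y|/9)\ge V(|x-y|)/9$, so that $tF(|x-y|/9)\le c(d)\,t/[V^{2}(|x-y|)\,|x-y|^{d}]$, and Theorem~\ref{heatKernelCompl} then produces the bound of the corollary with $t/[V^{2}(|x-y|)\,|x-y|^{d}]$ in place of the last factor.

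To recover the $p_{t/2}(0)$ alternative inside the minimum I would turn to Lemma~\ref{UB},
$$p_{D}(t,x,y)\le p_{t/2}(0)\,\p^{x}(\tau_{D}>t/4)\,\p^{y}(\tau_{D}>t/4),$$
paired with the survival-probability upper bound
$$\p^{x}(\tau_{D}>s)\le C\,\frac{H_{R}}{\mathcal{J}(R)^{2}}\left(\frac{V(\delta_{D}(x))}{\sqrt{s}\wedge V(R)}\wedge 1\right),\qquad s>0,$$
which is essentially what the proof of Theorem~\ref{heatKernelCompl} establishes along the way. I would extract it by fixing $x_{0}\in\partial D$ realizing $\delta_{D}(x)$, setting $D_{1}=B(x_{0},r)\cap D$ with $V(12r)=\sqrt{s\wedge V^{2}(R)}$, and using the inclusion $\{\tau_{D}>s\}\subseteq\{\tau_{D_{1}}>s\}\cup\{X_{\tau_{D_{1}}}\in D\setminus D_{1}\}$: Markov's inequality combined with Lemma~\ref{exit_time_R} handles the first event, while \eqref{eq:l} applied to $X-x_{0}$ handles the second. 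Taking the minimum of the two bounds completes the first assertion.

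For the second assertion, under $\psi\in\WLSC{\la}{\lt}{\lC}$ with $|x-y|<1/\lt$, the previous argument still covers $t\le V^{2}(|x-y|)$ with $d$-only constants. For $t>V^{2}(|x-y|)$, Lemma~\ref{sup_p_t} (in which the factor $\lC^{-1-d/\la}$ naturally appears) together with \eqref{eq:rp} gives $p_{t/2}(0)\le c(d,\la)\,\lC^{-1-d/\la}\,t/[V^{2}(|x-y|)\,|x-y|^{d}]$, so the minimum in the claim reduces to $p_{t/2}(0)$; the Lemma~\ref{UB}--survival estimate then yields $p_{D}(t,x,y)\le C\,(\ldots)(\ldots)\,p_{t/2}(0)$ and the factor $\lC^{-1-d/\la}$ is absorbed into the overall constant. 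The main obstacle I anticipate is isolating the survival-probability upper bound in the second step with the correct $\sqrt{s}\wedge V(R)$ normalization: all ingredients are already in the excerpt, but the scale $r$ must be tuned so that the local geometry $|x-x_{0}|<r/3$ is compatible with $V^{2}(r)\asymp s\wedge V^{2}(R)$, ensuring that the Markov quotient $\E^{x}\tau_{D_{1}}/s$ and the escape probability via \eqref{eq:l} both carry the expected dependence on $H_{R}$ and $\mathcal{J}(R)$.
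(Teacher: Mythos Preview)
Your proof is correct and follows essentially the same approach as the paper. The only difference is that the paper obtains the survival-probability bound $\p^{x}(\tau_{D}>t/4)\le c\,H_{R}\,\mathcal{J}(R)^{-2}\bigl(\tfrac{V(\delta_{D}(x))}{\sqrt{t}\wedge V(R)}\wedge 1\bigr)$ by citing \cite[Lemma 6.2]{BGR2} after the domain-monotonicity step $D\subset\overline{B}^{c}$ (with $B$ the outer ball at a nearest boundary point), whereas you reconstruct it from Lemma~\ref{exit_time_R} and \eqref{eq:l}; the two arguments are equivalent, and the second assertion is handled identically in both via monotonicity $p_{t/2}(0)\le p_{t_{0}/2}(0)$ with $t_{0}=V^{2}(|x-y|)$ and Lemma~\ref{sup_p_t}.
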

\begin{proof} Let $x\in D$ and $S\in \partial D$ such that $\delta_D(x)=|x-S|$.
Since $D$ satisfies the outer ball condition, there is a ball $B$ of radius $R$ such that  $B\subset D^c$  and $S\in \overline{B}$ and $\delta_D(x)= \delta_{\overline{B}^c}(x)$.
 By  \cite[Lemma 6.2]{BGR2},
$$\p^x\(\tau_{D}>\frac{t}{4}\)\le \p^x\(\tau_{\overline{B}^c}>\frac{t}{4}\)\le c \frac{H_R}{\mathcal{J}(R)^{2}} \left(\frac{V(\delta_D(x))}{\sqrt{t}\wedge V(R)}\wedge1 \right),$$
with $c=c(d)$. Hence,  the first bound in the statement
is a simple consequence of  Lemma \ref{UB},
 Theorem  \ref{heatKernelCompl} and Remark  \ref{heatKernelCompl_2balls}.
To prove the second one
  we only need to consider the case $t\ge V^2(|x-y|)$.
  Let $t_0=  V^2(|x-y|)$. Since  $|x-y|\le 1/ {\lt}$ we have $t_0\le   V^2(1/ {\lt})$. Applying Lemma \ref{sup_p_t} we obtain
 $$ p_{t/2}(0)\le p_{t_0/2}(0) \le C\frac1 {\lC^{1+d/\la}} \[V^{-1}(\sqrt{t_0})\]^{-d}= C\frac1 {\lC^{1+d/\la}}\frac 1{|x-y|^d} \le C\frac1 {\lC^{1+d/\la}}\frac t{V^2(|x-y|)|x-y|^d},$$ with $C=C(d,\la)$.
This ends the proof
due to Lemma \ref{UB}.
\end{proof}
Regarding the assumptions of Corollary~\ref{heatKernelCompl1} we recall that $\A$ holds automatically if $\psi\in$ WLSC and $d\ge 3$.
\begin{rem}\label{ScAgree}
In what follows, when we write $\psi\in \mathrm{WLSC}\cap\mathrm{WUSC}$ and $C=C(\psi,\ldots)$, we mean $\psi\in\WLSC{\la}{\lt}{\lC}\cap \WUSC{\ua}{\ut}{\uC}$ and $C=C(\la,\lt,\lC,\ua,\ut,\uC,\ldots)$. Here is a simplifying convention 
\end{rem}
\begin{thm}\label{heatKernelComplGlobal}Let $R>0$ and let $D$ be an open set satisfying the outer ball condition at scale $R$.
Suppose that  global  {\rm WLSC} and {\rm WUSC} hold for $\psi$. Then there is a constant $C=C(d,\psi)$ such that for all $t>0$ and  $x,y\in D$,
$$ p_{D}(t, x, y)
    \le C  \left(\frac{V(\delta_D(x))}{\sqrt{t}\wedge V(R)}\wedge1 \right) \left(\frac{V(\delta_D(y))}{\sqrt{t}\wedge V(R)}\wedge1 \right)p(t,x,y)
.$$
  \end{thm}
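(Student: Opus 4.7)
The strategy is to apply Corollary~\ref{heatKernelCompl1} and then replace the factor $p_{t/2}(0)\wedge t/[V^2(|x-y|)|x-y|^d]$ appearing there by $p(t,x,y)$, by invoking the sharp two-sided free heat kernel estimates of Lemma~\ref{densityApprox}.

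First I would verify that the prefactor $H_R^2/\mathcal{J}(R)^4$ in Corollary~\ref{heatKernelCompl1} is bounded by a quantity depending only on $d$ and $\psi$, uniformly in $R>0$. Case~3 of the discussion of \A\ and \As\ shows that global $\mathrm{WLSC}\cap\mathrm{WUSC}$ implies \As, so $H_R\le H_\infty=H_\infty(d,\psi)<\infty$. For a uniform lower bound $\mathcal{J}(R)\ge c(d,\psi)>0$ I would adapt the argument used in the proof of Lemma~\ref{lem:spgenC11} for $\mathcal{I}$: global WUSC forces $\nu$ to carry comparable mass on large annuli while global WLSC controls the growth of $V^2$, and together they give $\inf_{\rho>0}\nu(B_\rho^c)V^2(\rho)>0$. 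Consequently the prefactor is absorbed into $C(d,\psi)$.

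Second, since global WLSC has threshold $\lt=0$, the condition $|x-y|<1/\lt$ in the second part of Corollary~\ref{heatKernelCompl1} is vacuous, and the corollary yields, for all $t>0$ and $x,y\in D$,
\begin{equation*}
 p_D(t,x,y)\le C\left(\frac{V(\delta_D(x))}{\sqrt{t}\wedge V(R)}\wedge 1\right)\left(\frac{V(\delta_D(y))}{\sqrt{t}\wedge V(R)}\wedge 1\right)\left(p_{t/2}(0)\wedge\frac{t}{V^2(|x-y|)|x-y|^d}\right).
\end{equation*}
It now suffices to show that the last factor is bounded by $C\,p(t,x,y)$. Applying Lemma~\ref{sup_p_t} with $\lt=0$ at any fixed $r$, together with Remark~\ref{rem:sVi}, gives $p_{t/2}(0)\approx p_t(0)\approx[V^{-1}(\sqrt{t})]^{-d}$ for all $t>0$. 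The lower bound of Lemma~\ref{densityApprox} with $\theta=0$ then yields
\begin{equation*}
 p(t,x,y)\ge C^{\ast}\min\left\{[V^{-1}(\sqrt{t})]^{-d},\ \frac{t}{V^2(|x-y|)|x-y|^d}\right\}\ge c\left(p_{t/2}(0)\wedge\frac{t}{V^2(|x-y|)|x-y|^d}\right),
\end{equation*}
and substituting this into the previous display finishes the proof.

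The main obstacle I anticipate is the first step: producing the uniform lower bound $\mathcal{J}(R)\ge c(d,\psi)$ valid for all $R>0$. This is the only place where the globality (as opposed to mere local validity) of the scalings is essential; the remaining ingredients are bookkeeping together with direct appeals to the results already compiled in Sections~\ref{sec:prel} and \ref{sec:DC}.
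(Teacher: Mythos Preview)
Your proposal is correct and follows essentially the same route as the paper: invoke the second bound of Corollary~\ref{heatKernelCompl1} (with $\lt=0$ so the restriction on $|x-y|$ is vacuous), control the prefactor $H_R^2/\mathcal{J}(R)^4$ uniformly in $R$ via \As\ and a lower bound on $\mathcal{J}$, and then use Lemma~\ref{densityApprox} together with $p_{t/2}(0)\approx[V^{-1}(\sqrt t)]^{-d}$ to replace the minimum by $p(t,x,y)$. The only difference is that the paper disposes of the step you flag as the main obstacle, namely $\inf_{R>0}\mathcal{J}(R)>0$, by a direct citation to \cite[Proposition~5.2]{BGR2} rather than re-deriving it; your sketched argument is in the right spirit but you need not carry it out here.
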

  \begin{proof}Due to \cite[Corollary 24]{2014-KB-TG-MR-jfa}
$p_t(0)\approx
p_{t/2}(0)$ and   by Lemma \ref{sup_p_t} we have $p_t(0)\approx  \[V^{-1}(\sqrt{t})\]^{-d}$ with  comparability constants depending only on $d$ and $\psi$. By  Lemma~\ref{densityApprox}, $p(t,x,y)\approx  p_{t/2}(0) \wedge \left[t |x- y|^{-d}/V^2(|x- y|)\right]$ with comparability constants depending only on $d$ and $\psi$.

 By global WLSC and WUSC for $\psi$, we have
  $ \inf_{R>0} \mathcal{J}(R)>0$ {(see \cite[Proposition 5.2]{BGR2}) and $\As$  (see \cite[Lemma~7.2 and Lemma 7.3]{BGR2})}, hence
  ${H_R}/{\mathcal{J}(R)^{2}}\le C=C(d,\psi)$.

Therefore 	the claim is an obvious consequence of the second bound of Corollary \ref{heatKernelCompl1}.
\end{proof}
\begin{rem}\label{rem:d1s}
$\A$ may usually be circumvented in the (exceptional) dimension $d=1$, cf. \cite[Proposition~2.6, Corollary~4.7]{BGR2}. This may be of interest for the upper bounds of the survival probability if $\psi$ satisfies WLSC but not WUSC.
\end{rem}

The following is a simple corollary to Theorem \ref{HeatKula}. We skip the proof, since it repeats the arguments used to prove Corollary \ref{heatKernelCompl1}
	
		\begin{cor}\label{hk_kula1}
Let
$D$ be  open and convex.
			 If $\psi\in\WLSC{\la}{\lt}{ \lC}$, $t>0$, $|x-y|<  1/\lt$ and $x,y\in D$, then    there  is a constant $C=C(d,\la)$ such that
			$$p_{D}(t, x, y)
    \le  \frac{C} {\lC^{2(1+d)/\la+1}}  \left(\frac{V(\delta_D(x))}{\sqrt{t} }\wedge1 \right) \left(\frac{V(\delta_D(y))}{\sqrt{t}}\wedge1 \right) \left(p_{t/2}(0) \wedge \frac {t}{V^2(|x- y|) |x- y|^d}\right),$$
and if $\lt=0$, then			
there is a constant $C=C(d,\psi)$ such that  for all $t>0$ and  $x,y\in D$,
		$$p_{D}(t, x, y)
    \le  C  \left(\frac{V(\delta_D(x))}{\sqrt{t} }\wedge1 \right) \left(\frac{V(\delta_D(y))}{\sqrt{t}}\wedge1 \right) \left(p_t(0) \wedge \frac {t}{V^2(|x- y|) |x- y|^d}\right).$$
		\end{cor}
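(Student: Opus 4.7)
The plan is to split according to whether $t \le V^2(|x-y|)$ or $t > V^2(|x-y|)$, since Theorem~\ref{HeatKula} handles the former range directly and leaves only the large-time regime, where scaling of $\psi$ enters the picture. As in the proof of Corollary~\ref{heatKernelCompl1}, no new ideas are needed beyond Theorem~\ref{HeatKula}, Lemma~\ref{UB}, the convexity estimate \cite[(2.21)]{BGR2}, and Lemma~\ref{sup_p_t}; the bulk of the work is a careful bookkeeping of constants in terms of $\lC$.

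For $0<t\le V^2(|x-y|)$, I apply Theorem~\ref{HeatKula} with $F(r)=\Cf/[r^d V^2(r)]$, which is valid by Lemma~\ref{upper_den}. This immediately yields
\[
p_D(t,x,y) \le C\left(\frac{V(\delta_D(x))}{\sqrt t}\wedge1\right)\left(\frac{V(\delta_D(y))}{\sqrt t}\wedge1\right)\left(p_{t/2}(0)\wedge tF(|x-y|/9)\right),
\]
and since $V$ is subadditive, $V(|x-y|/9)\ge V(|x-y|)/9$, so $tF(|x-y|/9)\le C' t/[V^2(|x-y|)|x-y|^d]$, giving the first inequality of the statement on this range.

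For $t>V^2(|x-y|)$, let $t_0=V^2(|x-y|)$. By Lemma~\ref{UB} and the convexity bound $\p^x(\tau_D>t/4)\le c(V(\delta_D(x))/\sqrt t\wedge1)$ from \cite[(2.21)]{BGR2}, I get
\[
p_D(t,x,y)\le 4c^2 p_{t/2}(0)\left(\frac{V(\delta_D(x))}{\sqrt t}\wedge1\right)\left(\frac{V(\delta_D(y))}{\sqrt t}\wedge1\right).
\]
To match the form in the statement, I need to show $p_{t/2}(0)$ is bounded, up to constants, by both $p_{t/2}(0)$ (trivial) and $t/[V^2(|x-y|)|x-y|^d]$. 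For the latter, since $p_s(0)$ is nonincreasing in $s$ (Cauchy--Schwarz via Chapman--Kolmogorov) I have $p_{t/2}(0)\le p_{t_0/2}(0)$; then the hypothesis $|x-y|<1/\lt$ gives $t_0/2\le V^2(1/\lt)$, so Lemma~\ref{sup_p_t} (with $r=1$) yields $p_{t_0/2}(0)\le c(1+\lC^{-1-d/\la})[V^{-1}(\sqrt{t_0/2})]^{-d}$. Applying the scaling inequality~\eqref{eq:siVi} with $\omega=\sqrt{t_0}$, $\eta=1/\sqrt2$ converts $[V^{-1}(\sqrt{t_0/2})]^{-d}$ into a constant multiple of $\lC^{-d/\la}|x-y|^{-d}$, and then using $t/V^2(|x-y|)\ge 1$ I obtain $p_{t/2}(0)\le C\lC^{-a}\, t/[V^2(|x-y|)|x-y|^d]$ for a suitable $a=a(d,\la)$. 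Combining, the minimum version follows in the large-time regime.

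The global statement ($\lt=0$) is an immediate specialization: the restriction $|x-y|<1/\lt$ becomes vacuous, and all implicit constants absorb into one depending only on $d$ and $\psi$. The main subtlety, as in Corollary~\ref{heatKernelCompl1}, is simply keeping track of how $1/\lC$ accumulates through the application of Lemma~\ref{sup_p_t} and the scaling inequality~\eqref{eq:siVi}; since the author just invokes the previous argument, I do the same.
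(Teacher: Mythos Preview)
Your proposal is correct and follows precisely the route the paper indicates: it explicitly says the proof ``repeats the arguments used to prove Corollary~\ref{heatKernelCompl1}'' with Theorem~\ref{HeatKula} replacing Theorem~\ref{heatKernelCompl}, and that is exactly what you do. Two minor cleanups: choosing $r=1/2$ in Lemma~\ref{sup_p_t} gives $[V^{-1}(\sqrt{t_0})]^{-d}=|x-y|^{-d}$ directly, so the detour through~\eqref{eq:siVi} is unnecessary; and for the global case you should state explicitly that $p_{t/2}(0)\approx p_t(0)$ under global WLSC (e.g.\ via Lemma~\ref{sup_p_t} or \cite[Corollary~24]{2014-KB-TG-MR-jfa}), since the second display in the statement features $p_t(0)$ rather than $p_{t/2}(0)$.
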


\section{Lower bound}\label{sec:LowerBound}
By Lemma~\ref{upper_den},
$p_t(x)\le {\Cf}t/[V^2(|x|)|x|^d].$
We shall often assume the following partial converse.

\noindent
{\bf Condition ${\bf G}_R$:}
We say ${\bf G}_R$ holds if $R>0$ and there is $\Cl \in [1,\infty)$ such that
\begin{equation}\label{A}\frac{t}{ V^2(|x|)|x|^d}\le \Cl p_t(x),
\qquad
{0<}t\le V^2(|x|),\quad |x|\le R.
\end{equation}
The condition is merely for notational convenience since it has the following characterization.
\begin{lem}\label{Gr_Ex} Let $0<R<\infty$.
${\bf G}_R$ holds if and only if $\psi\in\mathrm{WLSC}\cap\mathrm{WUSC}$ and  $\nu(R^-)>0$.
\end{lem}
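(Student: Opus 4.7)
The plan is to prove the equivalence by splitting into two directions and, in each direction, by separating the small-$|x|$ regime (governed by the scalings and Lemma~\ref{densityApprox}) from the intermediate-$|x|$ regime (governed by $\nu(R^-)>0$ and Lemma~\ref{HKLB}).

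For the ``if'' direction, assume $\psi\in\WLSC{\la}{\lt}{\lC}\cap\WUSC{\ua}{\ut}{\uC}$ and $\nu(R^-)>0$. Set $\theta=\max\{\lt,\ut\}$ and let $r_0=r/\theta$ (with $r_0=\infty$ if $\theta=0$) be the threshold from Lemma~\ref{densityApprox}. For $|x|\le r_0\wedge R$ and $t\le V^2(|x|)$, Lemma~\ref{densityApprox} combined with \eqref{eq:rp} delivers the $t/[V^2(|x|)|x|^d]$ branch directly. In the intermediate range $r_0<|x|\le R$ (only relevant when $\theta>0$), Lemma~\ref{HKLB} gives $p_t(x)\ge 4^{-d}t\,\nu(x)\,[\p^0(\tau_{B_{|x|/2}}>t)]^2$. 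Monotonicity of $\nu$ makes $\nu(x)\ge\nu(R^-)>0$, and since $|x|/2\ge r_0/2$ while $t\le V^2(R)$, the probability $\p^0(\tau_{B_{|x|/2}}>t)$ is bounded below by the positive constant $\p^0(\tau_{B_{r_0/2}}>V^2(R))$ (one may arrange this to be $1/2$ by invoking \eqref{eq:3} together with subadditivity of $V$ to compare the interior scale $V^2(|x|/2)$ to $V^2(|x|)$). Since $V^2(|x|)|x|^d$ is pinched between $V^2(r_0)r_0^d$ and $V^2(R)R^d$ on this range, the estimate $p_t(x)\ge c\,t/[V^2(|x|)|x|^d]$ follows with $c$ depending only on $\psi$, $R$ and $\nu(R^-)$.

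For the ``only if'' direction, assume ${\bf G}_R$. Dividing \eqref{A} by $t$ and letting $t\downarrow 0$ yields $\nu(|x|)\ge \Cl^{-1}/[V^2(|x|)|x|^d]$ for all $0<|x|\le R$, and monotonicity of $\nu$ forces $\nu(R^-)\ge \Cl^{-1}/[V^2(R)R^d]>0$. For the scalings, one upgrades \eqref{A} to the full lower bound of Lemma~\ref{densityApprox} on the range $|x|\le R$, $t\le V^2(R)$: if $t\le V^2(|x|)$ the bound coincides with the $t/[V^2(|x|)|x|^d]$ branch by \eqref{eq:rp}; if $t>V^2(|x|)$, pick $y$ with $|y|=V^{-1}(\sqrt t)\le R$, note $|x|<|y|$, and apply unimodality together with \eqref{A} evaluated at $y$ (where $t=V^2(|y|)$) to obtain $p_t(x)\ge p_t(y)\ge \Cl^{-1}[V^{-1}(\sqrt t)]^{-d}$, which is the other branch. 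Since WLSC and WUSC at infinity for $\psi$ are conditions on small-$t$ (equivalently large-$|\xi|$) behavior of $p_t$, the resulting two-sided local estimate reproduces the hypothesis of the converse part of \cite[Theorem~26]{2014-KB-TG-MR-jfa}, from which $\psi\in\mathrm{WLSC}\cap\mathrm{WUSC}$ follows.

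The main obstacle is the gluing of the two regimes in the ``if'' direction: one must produce a \emph{single} constant $\Cl$ controlling both $|x|\le r_0$ (via Lemma~\ref{densityApprox}, whose constants depend on $\la,\ua,\lC,\uC$) and $r_0<|x|\le R$ (via Lemma~\ref{HKLB} and $\nu(R^-)$), and in particular handle the transition strip where $t$ is comparable to $V^2(|x|)$ and the exit probability in Lemma~\ref{HKLB} is not automatically $\ge 1/2$. Subadditivity of $V$ together with compactness of this strip in the $(t,|x|)$ plane and continuity and strict positivity of $p_t$ on $\Rdz$ should make this routine, but careful bookkeeping of the dependence of $\Cl$ on $R$, the scaling characteristics of $\psi$, and $\nu(R^-)$ is the fiddly part.
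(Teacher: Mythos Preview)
Your proposal is correct and follows essentially the same route as the paper: Lemma~\ref{densityApprox} for small $|x|$, Lemma~\ref{HKLB} plus $\nu(R^-)>0$ for the remaining range in the ``if'' direction, and vague convergence $p_t/t\to\nu$ together with \cite[Theorem~26]{2014-KB-TG-MR-jfa} for the ``only if'' direction. The one simplification the paper makes that dissolves your ``gluing'' worry is to apply Lemma~\ref{HKLB} at a single point of modulus $R$ (using continuity of $p_t$ to replace $\nu(R)$ by $\nu(R^-)$) and then invoke radial monotonicity $p_t(x)\ge p_t(R)$ for all $r\le|x|\le R$; this yields $p_t(x)\ge ct$ with a constant independent of $x$, so no transition strip or bookkeeping across $|x|$ is needed.
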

\begin{proof}
For one implication we assume that $\psi\in\mathrm{WLSC}\cap\mathrm{WUSC}$ and  $\nu(R^-)>0$.
By Lemma \ref{densityApprox} there is $r=r(d,\psi)>0$ such that ${\bf G}_{r}$ holds.
We may and do assume that $R>r$.
Let $r\leq |x|\le R$.  By Lemma~\ref{HKLB}, and continuity of $p_t$, for $0<t\leq V^2(R)$ we have
$$p_t(R)\geq 4^{-d} t\nu(R^-)[\p^0(\tau_{B_{R/4}}>V^2(R))]^2= c t .$$
By radial monotonicity of $p_t$,
$$p_t(x)\geq p_t(R)\geq   c  V^2(r)r^d\frac{t}{V^2(|x|)|x|^d},$$
as needed.
For the converse implication, we note that ${\bf G}_R$ and \cite[Theorem~26]{2014-KB-TG-MR-jfa} imply scalings of $\psi$ with $\theta=R^{-1}$. Since $p_t(x)/t\to \nu(x)$ vaguely on $\Rdz$, ${\bf G}_R$ and monotonicity of $\nu$ yield $\nu(R^-)\ge \Cl^{-1}/[V^2(R)R^d]>0$,
which ends the proof.
\end{proof}
Thus in many cases, if ${\bf G}_R$ holds for some  value $R$ and $\Cl$, then it holds for every $R{\in (0,\infty)}$ with $\Cl $ depending on $R$. This is so, e.g., for every
subordinate Brownian motion,
due to
Lemma \ref{Gr_Ex} and positivity of $\nu$.
It may also 
happen that \eqref{A} is true for some $R$, but it fails for larger values of $R$.
This is the case for the truncated L\'evy process, whose
L\'evy measure is supported by a
bounded set (see Section~\ref{sec:examples}).
For clarity, ${\bf G}_\infty$ is equivalent to global scaling conditions on $\psi$ \cite[Theorem~26]{2014-KB-TG-MR-jfa}.
 Notice also that due to  \cite[Theorem 26]{2014-KB-TG-MR-jfa} and \cite[Lemmas 7.2 and 7.3]{BGR2},
${\bf G}_R$
implies $\A$.
Furthermore, if we replace $X$ by $X/R$, then by \eqref{eq:tLV}, $V(x)$ is replaced by $V(Rx)$, and if we subsequently replace $x$ by $Rx$,
then we equivalently obtain ${\bf G}_1$
for $X/R$.

Before stating the next result we recall that ${\mathcal I}$ is defined in \eqref{defIJ}.
\begin{lem} \label{Levy_V} There is $c=c(d)$ such that if ${\bf G}_R$
holds,  then
$$\inf_{r\le R}\mathcal{I}(r)\ge c/\Cl .$$
\end{lem}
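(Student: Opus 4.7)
The plan is to extract from ${\bf G}_R$ a pointwise lower bound for the density of the L\'evy measure $\nu$, then integrate in polar coordinates and exploit subadditivity of $V$ to obtain the desired uniform bound.

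First, since $p_t(x)/t \to \nu(x)$ as $t\to 0^+$ (this holds almost everywhere on $\Rdz$, and one can alternatively integrate against indicators of annuli bounded away from the origin and use the standard Lévy limit $t^{-1}\p^0(X_t\in U)\to\nu(U)$ for open sets $U$ away from $0$), dividing the inequality \eqref{A} by $t$ and passing to the limit gives
\[
\nu(x)\ \ge\ \frac{1}{\Cl\,V^2(|x|)\,|x|^d},\qquad |x|\le R.
\]
Fix $0<r\le R$ and $0<\rho\le r/2$. Integrating the above over $B_r\setminus B_\rho$ in polar coordinates,
\[
\nu(B_r\setminus B_\rho)\ \ge\ \frac{1}{\Cl}\int_{B_r\setminus B_\rho}\frac{dx}{V^2(|x|)\,|x|^d}\ =\ \frac{\omega_d}{\Cl}\int_\rho^r\frac{ds}{s\,V^2(s)}.
\]

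The key step is to bound the last integral from below by a constant multiple of $1/V^2(\rho)$. Since $2\rho\le r$, we may restrict integration to $[\rho,2\rho]$; on this interval the subadditivity \eqref{subad} yields $V(s)\le V(2\rho)\le 2V(\rho)$, and hence $V^2(s)\le 4V^2(\rho)$. Therefore
\[
\int_\rho^r\frac{ds}{s\,V^2(s)}\ \ge\ \int_\rho^{2\rho}\frac{ds}{s\,V^2(s)}\ \ge\ \frac{1}{4V^2(\rho)}\int_\rho^{2\rho}\frac{ds}{s}\ =\ \frac{\ln 2}{4\,V^2(\rho)}.
\]
Multiplying by $V^2(\rho)$ we get $V^2(\rho)\,\nu(B_r\setminus B_\rho)\ge \omega_d\ln 2/(4\Cl)$. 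Taking the infimum over $0<\rho\le r/2$ and then over $r\le R$, the lemma follows with $c=(\omega_d\ln 2)/4$, which depends only on $d$.

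The only slightly subtle point I foresee is the rigorous justification of the pointwise (or a.e.) lower bound on $\nu$ extracted from ${\bf G}_R$; this is a standard but non-trivial property of L\'evy densities and is handled cleanly by testing against indicators of annuli bounded away from the origin. Once this bound is in hand, the argument is a brief calculation: polar coordinates reduce the estimate of $\mathcal{I}(r)$ to a one-dimensional integral, and subadditivity of $V$ provides the doubling $V(2\rho)\le 2V(\rho)$ needed to control that integral uniformly in $\rho$, with no scaling assumption on $\psi$ beyond what is already encoded in ${\bf G}_R$.
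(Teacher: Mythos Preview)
Your proof is correct and follows essentially the same approach as the paper: extract the pointwise lower bound $\nu(x)\ge 1/(\Cl V^2(|x|)|x|^d)$ for $|x|<R$ from ${\bf G}_R$ via $p_t(x)/t\to\nu(x)$, integrate over the annulus $B_{2\rho}\setminus B_\rho$, and use subadditivity of $V$ to replace $V^2(s)$ by $4V^2(\rho)$ there. The paper's argument is a touch terser (it integrates directly over $B_{2\rho}\setminus B_\rho$ in $\Rd$ rather than passing to polar coordinates), but the substance is identical.
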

 \begin{proof} Let $0<r\le R$. Note that   (\ref{A}) implies  \begin{equation}\label{Levy-V1}\nu(x)\ge \frac {1}{\Cl  V^2(|x|)|x|^d}, \quad |x|<R.\end{equation}
For $\rho\le r/2$ we obtain
$$V^2({\rho}) \nu(B_{{r}}\setminus B_{\rho})\ge V^2(\rho)\int_{B_{2\rho}\setminus B_{\rho}} \frac {dx}{\Cl  V^2(|x|)|x|^d}\ge \frac14\int_{B_{2\rho}\setminus B_{\rho}} \frac {dx}{\Cl  |x|^d} = \frac c { \Cl }, $$
which completes the proof.
\end{proof}

We now  give the lower bound for the heat kernel
for union of two balls of the same radius.
\begin{thm} \label{lower_hk_estimate}
Let $R>0$ and
$\psi\in \WLSC{\la}{R^{-1}}{\lC}$. Assume that   ${\bf G}_R$ is satisfied.  Let $D=B(z_1,R)\cup B(z_2,R)$.
 There exist $c=c(d)<1, c_1=c_1(d,\la)$
 such  that
		$$ p_{D}(t,x,y)\ge   \frac{ c_1 \lC^{1+d/\la}}  {{H_R^2}(\Cl )^{{{9}+d/\la}}}  \left(\frac{V(\delta_D(x))}{\sqrt{t}}\wedge1\right)\left(\frac{V(\delta_D(y))}{\sqrt{t}}\wedge1\right)
		\big( p_{t/2}(0)\wedge [t\,\nu(2|x-y|\wedge \diam(D))]\big),$$
provided  $0<t\le  c V^2(R)/\Cl $,
$x,y\in D$, $\delta_D(x)= \delta_{B(z_1,R)}(x)$ and $\delta_D(y)= \delta_{B(z_2,R)}(
y)$.
\end{thm}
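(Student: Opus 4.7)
Set $r = V^{-1}(\sqrt t\,)$. Subadditivity \eqref{subad} gives $V(R)\le 12V(R/12)$, so the hypothesis $t\le cV^2(R)/\Cl$ with $c=1/144$ and $\Cl\ge 1$ forces $r\le R/12$. Following Lemma~\ref{lower bound13}, fix interior anchors $x_*\in B(z_1,R)$ and $y_*\in B(z_2,R)$ with $B(x_*,r/12)\subset B(z_1,R)$ and $B(y_*,r/12)\subset B(z_2,R)$; the hypothesis $\delta_D=\delta_{B(z_i,R)}$ at $x,y$ makes the choice of anchor in the appropriate ball unambiguous. Applying Chapman--Kolmogorov twice,
\begin{equation*}
p_D(t,x,y)\ge \int_{B(x_*,r/12)}\!\int_{B(y_*,r/12)} p_D(t/3,x,u)\,p_D(t/3,u,v)\,p_D(t/3,v,y)\,du\,dv.
\end{equation*}
The two outer factors are estimated by the argument of Corollary~\ref{int2} transplanted to $B(z_i,R)$ (legitimate because $p_{B(z_i,R)}\le p_D$). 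Combined with Lemma~\ref{Levy_V} (which yields $\mathcal{I}(r/8)\ge c/\Cl$) and the pointwise bound $t\,\nu(r)r^d\ge c/\Cl$ (from \eqref{Levy-V1} together with $V^2(r)=t$), one obtains
\begin{equation*}
\int_{B(x_*,r/12)} p_D(t/3,x,u)\,du \,\gtrsim\, \frac{1}{H_R\,\Cl^{2}}\Bigl(\frac{V(\delta_D(x))}{\sqrt t}\wedge 1\Bigr),
\end{equation*}
and the symmetric bound in $y$.

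The decisive step is the middle factor: I would show that, for $u\in B(x_*,r/12),\ v\in B(y_*,r/12)$ (both with $\delta_D(\cdot)\ge r/3$),
\begin{equation*}
p_D(t/3,u,v)\,\gtrsim\, p_{t/2}(0)\wedge t\,\nu(2|x-y|\wedge\diam D).
\end{equation*}
I split by the size of $|u-v|$ relative to $r$. In the \emph{short-range} regime $|u-v|\le r$, one has $v\in B(u,r/3)\subset D$, so $p_D(t/3,u,v)\ge p_{B(u,r/3)}(t/3,u,v)$; the free lower bound of Lemma~\ref{densityApprox} gives $p(t/3,u,v)\asymp [V^{-1}(\sqrt t)]^{-d}$, the boundary leakage $\E^u[p(t/3-\tau_{B(u,r/3)},X_\tau,v);\tau<t/3]$ is controlled by Lemma~\ref{kula1}\eqref{eq:2}, leaving a positive fraction, and Lemma~\ref{sup_p_t} converts $[V^{-1}(\sqrt t)]^{-d}$ to $p_{t/2}(0)$ at the cost of the factor $\lC^{1+d/\la}$. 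In the \emph{long-range} regime $|u-v|>r$, I would apply Lemma~\ref{lemppu100} (lower bound) with $D_1'=B(u,\rho)$, $D_3'=B(v,\rho)$, $\rho=\min(r/3,|u-v|/4)$: both are contained in $D$ and are disjoint, Lemma~\ref{kula1}\eqref{eq:3} gives survival probabilities $\ge 1/2$ since $t/3\le\Ck V^2(\rho)$, and monotonicity of $\nu$ bounds $\inf\nu(a-b)\ge\nu(3|u-v|/2)$, whence $p_D(t/3,u,v)\gtrsim t\,\nu(|u-v|)$. Since $|u-v|\le|x-y|+r$ and $|u-v|\le\diam D$, monotonicity of $\nu$ produces $\nu(|u-v|)\ge c\,\nu(2|x-y|\wedge\diam D)$.

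Assembling the three lower bounds and integrating over the two anchor-balls absorbs the $r^d$ factors against $t\nu(r)r^d\asymp 1/\Cl$. The explicit exponent $\Cl^{9+d/\la}$ in the statement accumulates from the two outer boundary contributions ($\Cl^2$ each), the passage $[V^{-1}(\sqrt t)]^{-d}\to p_{t/2}(0)$ in the short-range case ($\Cl^{4+d/\la}$ through Lemmas~\ref{sup_p_t} and~\ref{Levy_V}), and the remaining quantitative uses of ${\bf G}_R$; the prefactor $\lC^{1+d/\la}$ arises solely from Lemma~\ref{sup_p_t}. I expect the main obstacle to be the middle factor: cleanly interpolating between the diffusive and jump regimes while tracking the $\lC$- and $\Cl$-dependence, and properly relating $|u-v|$ to $2|x-y|\wedge\diam D$ across all possible configurations of $B(z_1,R)\cup B(z_2,R)$ (disjoint, touching, overlapping, or widely separated), which is what forces the appearance of $\diam D$ rather than merely $|x-y|$.
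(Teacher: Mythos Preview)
Your strategy---Chapman--Kolmogorov through anchor balls, with the outer factors supplied by Corollary~\ref{int2}---matches the paper's close-range case, but the paper organizes the proof differently and two of your steps have gaps. The paper does \emph{not} always route through the three-fold integral: it first splits on whether $|x-y|>2r$, where $r$ is defined by $t=c^*V^2(r)$ with an explicit small $c^*\asymp 1/\Cl$ (see \eqref{eq:dcg}). When $|x-y|>2r$ it applies Lemma~\ref{lemppu100} directly from $x$ to $y$ with $D_1=D_x$, $D_3=D_y$; the survival probabilities $\p^x(\tau_{D_1}>t)$, $\p^y(\tau_{D_3}>t)$ are bounded below via Lemma~\ref{L5a1} and Lemma~\ref{Levy_V}, yielding the boundary factors and the jump term $t\,\nu(2|x-y|\wedge\diam D)$ in one stroke, with no Chapman--Kolmogorov at all. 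Only when $|x-y|\le 2r$ does the paper use your three-fold product, and then $|u-v|\le 3r$ automatically, so there is no ``long-range middle factor'' to handle separately.

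The first gap is your scale: with $r=V^{-1}(\sqrt t)$ you have $t=V^2(r)$, but Corollary~\ref{int2} requires $t\le\Cd V^2(r)$ with $\Cd<1$, and \eqref{eq:3} needs $t/3\le\Ck V^2(\rho)$; neither holds without the small prefactor $c^*$, whose $1/\Cl$ dependence is what produces the third power of $\Cl$ in each outer integral (hence part of the exponent $9+d/\la$). The second gap is your short-range middle factor: you write $v\in B(u,r/3)\subset D$ and invoke $p_D\ge p_{B(u,r/3)}$, but your hypothesis $|u-v|\le r$ does not place $v$ in that ball, so the domain-monotonicity lower bound is vacuous; moreover when $\delta_D(x)\ge r/6$ and $x_*=x$ one only has $\delta_D(u)\ge r/12$, so $B(u,r/3)\subset D$ can fail (the same issue bites your choice $\rho=r/3$ in the long-range step). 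The paper instead runs Duhamel directly in $D$: the leakage $\E^u[p(t/3-\tau_D,X_{\tau_D},v);\tau_D<t/3]$ is controlled via $|X_{\tau_D}-v|\ge\delta_D(v)\ge r/12$, Lemma~\ref{upper_den} and \eqref{eq:2}; the free term is bounded below by radial monotonicity and ${\bf G}_R$ (namely $p(t/3,u,v)\ge p_{t/3}(3r)\ge t/(3^{d+3}\Cl V^2(r)r^d)$); and the constants in $c^*$ are tuned so the leakage is at most half the free term. Finally, your long-range claim $\nu(|u-v|)\ge c\,\nu(2|x-y|\wedge\diam D)$ does not follow from monotonicity alone---one can have $|u-v|>r$ while $2|x-y|\ll r$, and then the inequality points the wrong way; in the paper's split this never arises because Case~I already enforces $|x-y|>2r$, whence $|u-v|\le|x-y|+2r\le 2|x-y|$.
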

 \begin{proof}By the discussion at the beginning of the section we may and do assume that $R=1$.
We may {also} assume that $z_1=0$. Let
\begin{equation}\label{eq:dcg}
 c^*= \frac{\min\{( {12^{d+4}3^{d+3}2\Cj\Cf} )^{-1},\, \Cb/{36},\, \Cd \}} {\Cl },
\end{equation}
where  $\Cb<1, \Cd$ are
from  Lemma \ref{L5a1} and Corollary \ref{int2} and $\Cl $ is from \eqref{A}.
Let
$0<t\le (c^*/9) V^2(1)
\le c^*V^2(1/3)$.  Let $0<r\le 1/3$ be such that
$$t=c^*V^2(r),\quad {\text{ or }\quad r=V^{-1}(\sqrt{t/c^*})}.$$
Let $x\in D$.   If  $\delta_D(x)<r/6$, then we let $x_0=x/|x|$, $x_1 = x_0(1-r/2)$ {and $r_x=r/2$}, otherwise we let
$x_1=x$ {and $r_x=\delta_D(x)$}.
Denote $D_x=B(x_1,{r_x})$.
Similarly, we let $D_y=B(y_1,{r_y})$, where
$y_1 = y_0(1-r/2)+z_2$ if  $\delta_D(y)<r/6$ and $r_y=r/2$, with  $y_0=(y-z_2)/|y-z_2|$, and we let $y_1=y$, $r_y=\delta_D(y)$ otherwise.

CASE I.
We first assume that $|x-y|>2r$.
For $u\in D_x$ and  $v\in D_y$ we have
$$ |u-v| \le |u-x_1|+|x_1-x|+|x-y|+ |y-y_1|+|y_1-v|
\le  |x-y|+ 2r \le 2|x-y|. $$
 We next use  Lemma \ref{lemppu100}
  with  $D_1= D_x$ and $D_3=D_y$, and obtain
	\begin{eqnarray*}p_{D} (t, x , y) &\ge&
    \p^{x}(\tau_{D_1}>t)\p^{y}(\tau_{D_3}>t)\,\;
    t\!\!\!\!\!\inf_{u\in D_1,\, v\in D_3}\!\!\!\nu(u-v)\\
    &\ge&   \p^{x}(\tau_{D_1}>t)\p^{y}(\tau_{D_3}>t)\, t\,\nu(2|x-y|\wedge \diam(D)).
		\end{eqnarray*}
By subadditivity of $V$  we have $t= c^*V^2(r) \le {\Cb} V^2(r)/{36}\le  \Cb V^2(r/6)\le  \Cb V^2({r_x})$. By Lemma \ref{L5a1} and
\ref{Levy_V},
\begin{eqnarray*}
\p^x(\tau_{D_1}>t)&\ge&  \Cc \frac{c_1}{{H_1
} \Cl }  \left(\frac{V({\delta_{D_x}(x)})}{\sqrt{t}}\wedge1\right)= \Cc \frac{c_1}{ H_{{1}}
\Cl }  \left(\frac{V(\delta_D(x))}{\sqrt{t}}\wedge1\right),
\end{eqnarray*}
 where  $c_1=c_1(d)$.
Hence,
 \begin{eqnarray*}p_{D} (t, x , y) &\ge&c_2(H_{1}\Cl )^{-2} \left(\frac{V(\delta_D(x))}{\sqrt{t}}\wedge1\right)\left(\frac{V(\delta_D(y))}{\sqrt{t}}\wedge1\right)
t\nu(2|x-y|\wedge \diam(D))
,
\end{eqnarray*}
where $c_2=c_2(d)$. Since  $\Cl \ge 1$ and $\lC\le 1$, we have a complete proof in this case.

{CASE II. $x,y\in D: |x-y|\le 2r$. We define $\tilde{D}_x=B(x_1, r/{12})$.

		Since $ t= c^*_0V^2(r)\le \Cd V^2(r)$, by Corollary \ref{int2}, Lemma \ref{Levy_V} {and \eqref{Levy-V1}} we have,
		 \begin{eqnarray} \int_{\tilde{D}_x}p_{D}(t/3, x, v)dv &\ge& \Ce\frac{c_3}{{H_{1}
} \Cl }   c^*V^2(r) \nu(r)r^d\left(\frac{V(\delta_D(x))}{\sqrt{t}}\wedge1\right)\nonumber\\
			&\ge&  \frac {c_4} {{H_
{1}}( \Cl )^{{3}}
} \left(\frac{V(\delta_D(x))}{\sqrt{t}}\wedge1\right), \label{int_estimate}
		\end{eqnarray}
where  $c_4=c_4(d)$. A similar inequality obtains for $ \int_{\tilde{D}_y}p_{D}(t{/3}, y, v)dv$.}

Let $u\in \tilde{D}_x$ and $	v\in \tilde{D}_y$.
We claim that there is $  c_5=c_5(d, \la) $ such that
\begin{equation}\label{oodpuv}
p_{ D}(t/3, u,v)\ge c_5 \frac{ {\lC^{1+d/\la}}}  {(\Cl )^{3+d/\la}} p_t(0).
\end{equation}
Indeed, we
have   $|u -v |\le  3r$. Our aim is to estimate  $\E^{u} p(t/3-\tau_{ D},X_{\tau_{ D}}, v)$.  Since $|z-v|\ge r/{12}$ for all $z\in D^c$, by (\ref{B}) and subadditivity of $V$ we obtain
	 \begin{eqnarray*}\E^{u} p(t/3-\tau_{ D},X_{\tau_{ D}}, v)&\le& 12^d{\Cf}\frac{t}{V^2(r/12)r^d}\p^{u}(\tau_{ D}\le t/3) \leq  12^{d+2}{\Cf}\frac{t}{V^2(r)r^d}\p^{0}(\tau_{ B_{r/12}}\le t/3)\\
	&\le& \frac{12^{d+4}\Cj\Cf}{r^d}\left(\frac{t}{V^2(r)}\right)^2, \end{eqnarray*}
	where the last step uses \eqref{eq:2}.
	Next, since $ t\le V^2(3r)$  {and $r\leq 1/3$},  by (\ref{A}) we have
	$$p(t/3,u,v)\ge \frac{t}{3\Cl V^2(3r)(3r)^d}\ge  \frac{t}{3^{{d+3}}\Cl V^2(r)r^d}.$$
	Recall that $\psi\in\WLSC{\la}{1}{\lC}$,
$t=c^*V^2(r)\le c^*V^2(1)$ and
 $c^*\Cl {\Cj\Cf 12^{d+4}}3^{d+3}\le1/2$.
Thus,
\begin{eqnarray*}p_{ D}(t/3, u,v)&=& p(t/3, u,v)- \E^{u} p(t/3-\tau_{ D},X_{\tau_{ D}}, v) \\
&\ge& c^*\frac1{3^{d+3}\Cl r^d} - (c^*)^2\frac{12^{d+4}\Cj\Cf}{r^d}\\
&\ge& \frac {c^*}{2\Cl 3^{d+3}}\frac1{r^d}=\frac {c^*}{2\Cl 3^{d+3}}(V^{-1}(\sqrt {t/c^*}))^{-d} ,\end{eqnarray*}
and by \eqref{sup-p-t-1},
$$\left(V^{-1}\left(\sqrt {t/c^*}\right)\right)^{-d}\ge  c_6 (\lC c^*)^{1+d/\la}p_t(0),$$
	 with  $ c_6=c_6(d,\la)$.  Since $c^*\Cl $ is a positive constant depending only on $d$, we obtain \eqref{oodpuv}.
By \eqref{oodpuv} and (\ref{int_estimate}),
		 \begin{eqnarray*}
    p_{D}(t,x,y)&\geq& \int_{\tilde{B}_y}\int_{\tilde{B}_x}   p_{D}(t/3,x,u)p_{D}(t/3,u,v)p_D(t/3,v,y)dudv\\
    &\geq&c_5 \frac{ \lC^{1+d/\la}}  {(\Cl )^{3+d/\la}}\; p_t(0)\int_{\tilde{D}_x}   p_{D}(t/3,x,u)du\int_{\tilde{D}_y}p_D(t/3,v,y)dv\\
 &\geq&   c_7 \frac{\lC^{1+d/\la}}{
{H_1^2}(\Cl )^{
{9}+d/\la}}   \left(\frac{V(\delta_D(x))}{\sqrt{t}}\wedge1\right) p_t(0) \left(\frac{V(\delta_D(y))}{\sqrt{t}}\wedge1\right),
  \end{eqnarray*}
	with  $  c_7=c_7(d, \la) $. The proof is complete, cf. Remark~\ref{rem:sVi}.
		\end{proof}
The end of the above proof shows a major strategy for $X$ to connect $x$ and $y$  and survive in $D$ time $t$: evade $D^c$ by going from $x$ and $y$ towards the center of $D$ and connect then.

\begin{rem}\label{lower_scaling1}  Suppose that   global WLSC and WUSC hold for $\psi$. Then (\ref{A}) holds for all $t$ and $x$ such that $  {0<}t\le V^2(|x|) $ with the constant $\Cl $ depending only on $d$ and $\psi$. Furthermore, $\As$ holds. It follows
that the constant in the lower bound in Theorem~\ref{lower_hk_estimate}
may be so chosen
to
depend only on $d$ and $\psi$.
\end{rem}

\begin{cor}\label{lower_scaling}
Suppose that   global {\rm WLSC} and {\rm WUSC} hold for $\psi$.
Constants $c^*=c^*(d,\psi)$ and $C^*=C^*(d,\psi)$
exist such that for every open  $D$  with  inner ball condition at scale $R$,
$$p_{D}(t,x,y)\ge C^*  \left(\frac{V(\delta_D(x))}{\sqrt{t}}\wedge1\right)\left(\frac{V(\delta_D(y))}{\sqrt{t}}\wedge1\right)p(t,x,y),$$
if $0<t\le c^* V^2(R)$.
If $D=\H_0$ or $D=\overline{B}_{R}^c$, then the estimate is true for all $t>0$.
\end{cor}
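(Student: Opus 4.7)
The plan is to reduce the general $D$ to a union of two inner balls of radius $R$ via domain monotonicity, invoke Theorem~\ref{lower_hk_estimate}, and show that the resulting $p_{t/2}(0)\wedge t\nu(\cdot)$ factor dominates $p(t,x,y)$. Global WLSC and WUSC of $\psi$ entail ${\bf G}_R$ for every $R\in(0,\infty]$ (by \cite[Theorem~26]{2014-KB-TG-MR-jfa}) and $\As$ (by \cite[Lemmas~7.2 and~7.3]{BGR2}), with constants depending only on $d,\psi$; Remark~\ref{lower_scaling1} then makes the constant in Theorem~\ref{lower_hk_estimate} independent of $R$.

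For $x,y\in D$ I would use the inner ball condition at scale $R$ to select $B(z_1,R),B(z_2,R)\subset D$ with $\delta_{B(z_i,R)}=\delta_D\wedge R$ at the corresponding point: the inner ball tangent to $\partial D$ at the nearest boundary point when $\delta_D<R$, and $B(\cdot,R)$ itself when $\delta_D\ge R$. Setting $\tilde D=B(z_1,R)\cup B(z_2,R)\subset D$ and $c^*\le\min\{c/\Cl,1\}$ with $c$ as in Theorem~\ref{lower_hk_estimate}, for $0<t\le c^*V^2(R)$ the theorem applied to $\tilde D$ together with $p_D\ge p_{\tilde D}$ yields
\begin{equation*}
p_D(t,x,y)\ge c_1\Bigl(\frac{V(\delta_D(x))}{\sqrt{t}}\wedge 1\Bigr)\Bigl(\frac{V(\delta_D(y))}{\sqrt{t}}\wedge 1\Bigr)\bigl(p_{t/2}(0)\wedge t\nu(2|x-y|\wedge\diam\tilde D)\bigr),
\end{equation*}
with $c_1=c_1(d,\psi)$, after noting that in the regime $\delta_D\ge R$ both $V(\delta_D(\cdot))/\sqrt{t}\wedge 1$ and $V(R)/\sqrt{t}\wedge 1$ equal $1$ (from $t\le V^2(R)$), so swapping $\delta_D\wedge R$ for $\delta_D$ is lossless.

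To dominate $p(t,x,y)$ I would use Lemma~\ref{upper_den}, $p(t,x,y)\le\Cf t/[V^2(|x-y|)|x-y|^d]$, and $p(t,x,y)\le p_t(0)\le p_{t/2}(0)$ (the last via radial unimodality and Chapman--Kolmogorov $p_t(0)=\int p_{t/2}^2$). From \eqref{Levy-V1} under ${\bf G}_\infty$, $\nu(z)\ge\Cl^{-1}/[V^2(|z|)|z|^d]$; WUSC with $\lambda=2$ gives the doubling $V(2r)\le cV(r)$, so $\nu(2|x-y|)\gtrsim 1/[V^2(|x-y|)|x-y|^d]$, and radial monotonicity of $\nu$ preserves this after truncation to $\diam\tilde D$. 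Hence $t\nu(2|x-y|\wedge\diam\tilde D)\gtrsim p(t,x,y)$, and combined with $p_{t/2}(0)\ge p(t,x,y)$ one concludes $p_{t/2}(0)\wedge t\nu(\cdot)\gtrsim p(t,x,y)$.

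For the special cases, both $\H_0$ and $\overline{B}_R^c$ admit the inner ball condition at every scale $R'>0$: the halfspace trivially, and the exterior of the ball by placing $B(Q+R'Q/|Q|,R')$ tangent at each $Q\in\partial B_R$ (its nearest point to the origin lies at distance $R$, so the ball is contained in $\overline{B}_R^c$). Since $V(R')\to\infty$ and $C^*$ is independent of $R'$, for every $t>0$ one chooses $R'$ with $t\le c^*V^2(R')$ and the previous estimate gives the bound for all $t>0$. The main subtle point is the geometric bookkeeping needed to ensure the hypothesis $\delta_{\tilde D}=\delta_{B(z_i,R)}$ of Theorem~\ref{lower_hk_estimate} is met by the chosen inner balls; this is automatic when both $\delta_D<R$, and in the complementary regime the lossless comparison above absorbs any discrepancy into the constants.
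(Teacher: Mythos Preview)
Your proposal is correct and follows essentially the same route as the paper: domain monotonicity to a union of two inner balls, Theorem~\ref{lower_hk_estimate}, Remark~\ref{lower_scaling1} for $R$-independence of constants, and a comparison of the $p_{t/2}(0)\wedge t\nu(\cdot)$ factor with $p(t,x,y)$. The paper dispatches this last comparison by citing \cite[Corollary~23]{2014-KB-TG-MR-jfa}; your explicit argument via Lemma~\ref{upper_den}, \eqref{Levy-V1} and $p_t(0)\le p_{t/2}(0)$ is an equally valid substitute. Two small remarks: the doubling $V(2r)\le 2V(r)$ comes directly from subadditivity \eqref{subad}, so WUSC is not needed there; and in the geometric step, when $\delta_D(x)<R$ the inner ball at $Q_x$ is not unique in general under the bare inner ball condition, so one must pick the inner ball whose center lies in the direction $(x-Q_x)/|x-Q_x|$ to guarantee $x\in B(z_1,R)$ with $\delta_{B(z_1,R)}(x)=\delta_D(x)$---this choice is always available, but your phrase ``this is automatic'' slightly understates the issue.
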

\begin{proof} This easily follows from domain monotonicity by using a union of two balls of radius $R$ instead of $D$ and applying Theorem \ref{lower_hk_estimate} along with Remark \ref{lower_scaling1} and \cite[Corollary 23]{2014-KB-TG-MR-jfa}.
\end{proof}

{The following improvement of \cite[Theorem 5.10]{2012arXiv1212.3092K} stems from Corollary \ref{lower_scaling} and Theorem \ref{heatKernelComplGlobal}.
\begin{cor}\label{hk_Halfspace_Approx} If global {\rm WLSC} and {\rm WUSC} hold for $\psi$,
then for  $D=\H_0$, $x,y\in D$ and $t>0$,
$$p_{D}(t,x,y)\approx  \left(\frac{V(\delta_D(x))}{\sqrt{t}}\wedge1\right)\left(\frac{V(\delta_D(y))}{\sqrt{t}}\wedge1\right) p(t,x,y),$$
with comparability constant depending only on $d$ and the scaling characteristics of $\psi$.
\end{cor}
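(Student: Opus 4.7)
The plan is to assemble the lower and upper bounds separately, reading each off a result already proved in the excerpt, and then to exploit the fact that the halfspace satisfies both inner and outer ball conditions at every scale.

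For the lower bound, the halfspace $\H_0$ has the inner ball condition at every scale $R>0$, so Corollary~\ref{lower_scaling} is applicable. Moreover the second sentence of that corollary states explicitly that when $D=\H_0$ the inequality
$$p_{\H_0}(t,x,y)\ge C^* \Bigl(\tfrac{V(\delta_{\H_0}(x))}{\sqrt{t}}\wedge 1\Bigr)\Bigl(\tfrac{V(\delta_{\H_0}(y))}{\sqrt{t}}\wedge 1\Bigr) p(t,x,y)$$
holds for all $t>0$, with $C^*=C^*(d,\psi)$. This gives the $\gtrsim$ direction without further work.

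For the upper bound, $\H_0$ also satisfies the outer ball condition at every scale $R>0$. Applying Theorem~\ref{heatKernelComplGlobal} we obtain
$$p_{\H_0}(t,x,y)\le C \Bigl(\tfrac{V(\delta_{\H_0}(x))}{\sqrt{t}\wedge V(R)}\wedge 1\Bigr)\Bigl(\tfrac{V(\delta_{\H_0}(y))}{\sqrt{t}\wedge V(R)}\wedge 1\Bigr) p(t,x,y),$$
where the constant $C=C(d,\psi)$ is independent of $R$. The $R$-independence is the whole point of working under \emph{global} scalings: the proof of Theorem~\ref{heatKernelComplGlobal} invokes $\As$ (so $H_R\le H_\infty<\infty$) and $\inf_{R>0}\mathcal{J}(R)>0$, which bound $H_R/\mathcal{J}(R)^2$ uniformly in $R$. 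Since $V(R)\to\infty$ as $R\to\infty$, for each fixed $t$ we have $\sqrt{t}\wedge V(R)=\sqrt{t}$ for all sufficiently large $R$, and letting $R\to\infty$ produces the matching upper bound.

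Combining the two estimates yields the asserted comparability with constants depending only on $d$ and the scaling characteristics of $\psi$. There is essentially no obstacle to overcome here beyond recognising that Theorem~\ref{heatKernelComplGlobal} and Corollary~\ref{lower_scaling} already contain everything needed once one observes that the halfspace admits inner and outer tangent balls of arbitrarily large radius; the corollary is, accordingly, a direct synthesis rather than a new computation.
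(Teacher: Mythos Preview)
Your proof is correct and follows exactly the route the paper indicates: the paper's own justification is simply that the corollary ``stems from Corollary~\ref{lower_scaling} and Theorem~\ref{heatKernelComplGlobal},'' and you have correctly supplied the details, in particular the observation that the constant in Theorem~\ref{heatKernelComplGlobal} is independent of $R$ so that one may send $R\to\infty$.
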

Below we give sharp heat kernel estimates for other classes of $C^{1,1}$ sets.
}		

\section{Global estimates for bounded $C^{1,1}$ sets}\label{sec:uni}

In this section we provide sharp explicit estimates of the heat kernel of bounded $C^{1,1}$ open sets.
To this end we combine
spectral properties of the heat kernel $p_D(t,x,y)$ for large time with the finite-time estimates
obtained in Sections~\ref{sec:UpperBound} and \ref{sec:LowerBound}.
Our discussion of spectral properties of $p_D$ closely follows that in \cite[the proof of Theorem~1.1]{MR2981852} but we additionally provide explicit control of comparability constants, which is delicate in the intermediate region between small and large times.
For instance under global scaling conditions on $\psi$ we give
an estimate of the heat kernel of the ball of
arbitrary radius, uniform enough to reproduce
optimal estimates of
the  heat kernel of a halfspace.

 Let $D$ be an open bounded set. In the remainder of the section we  assume that $p_t(0)$ is finite for every $t>0$, cf. Lemma~\ref{HW}. Then  the semigroup of integral operators on $L^2(D)$
with kernels $p_{D}(t, x, y)\le p_t(0)$ is compact, in fact Hilbert-Schmidt.
General theory yields
eigenvalues
 $0<\lambda_1<\lambda_2\le \dots$  and
orthonormal basis of eigenfunctions $\phi_1\ge 0,\phi_2, \phi_3 \dots$:
$$\phi_k(x) = e^{\lambda_{k}t} \int p_D(t,x,z)\phi_k(z)dz.$$

\begin{lem}\label{L2}  Let   $f\in L^2(D)$. Then
\begin{equation*}\label{eigen20} e^{-\lambda_1t}\left(\int f(w)\phi_1(w)dw\right)^2 \le \int \int f(w) f(z) p_D(t,z,w)dzdw \le e^{-\lambda_1t}\int f^2(w)dw. \end{equation*}
\end{lem}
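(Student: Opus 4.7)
The plan is to prove both inequalities by eigenfunction expansion of the symmetric Hilbert--Schmidt operator with kernel $p_D(t,\cdot,\cdot)$ on $L^2(D)$.

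First I would set up the spectral picture explicitly. The semigroup operators $P_t^D f(x)=\int p_D(t,x,y)f(y)\,dy$ are self-adjoint (by symmetry of $X$, and hence of $p_D$), nonnegative, and Hilbert--Schmidt. With the orthonormal basis $\{\phi_k\}$ and eigenvalues $e^{-\lambda_k t}$ given in the excerpt, the Mercer-type expansion
\begin{equation*}
p_D(t,z,w)=\sum_{k=1}^{\infty} e^{-\lambda_k t}\phi_k(z)\phi_k(w)
\end{equation*}
holds, with convergence in $L^2(D\times D)$ (and pointwise on the diagonal for $t$ replaced by $t/2$ together with Chapman--Kolmogorov, if one prefers to avoid appealing to Mercer's theorem directly). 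Writing $a_k=\int_D f\phi_k$ for $f\in L^2(D)$, Parseval gives $\sum_k a_k^2=\|f\|_{L^2(D)}^2$.

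Next I would substitute this expansion into the double integral. Since $p_D(t,z,w)\ge 0$ and the partial sums converge in $L^2$, Fubini yields
\begin{equation*}
\int_D\!\!\int_D f(w)f(z)\,p_D(t,z,w)\,dz\,dw=\sum_{k=1}^{\infty} e^{-\lambda_k t} a_k^2.
\end{equation*}
The upper bound is then immediate from $e^{-\lambda_k t}\le e^{-\lambda_1 t}$:
\begin{equation*}
\sum_{k=1}^{\infty} e^{-\lambda_k t} a_k^2\le e^{-\lambda_1 t}\sum_{k=1}^{\infty} a_k^2=e^{-\lambda_1 t}\int_D f^2(w)\,dw.
\end{equation*}
The lower bound follows by dropping all but the first term:
\begin{equation*}
\sum_{k=1}^{\infty} e^{-\lambda_k t} a_k^2\ge e^{-\lambda_1 t} a_1^2=e^{-\lambda_1 t}\left(\int_D f(w)\phi_1(w)\,dw\right)^{\!2}.
\end{equation*}

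There is no real obstacle here; the only mildly technical point is justifying the Mercer-type expansion and the interchange of sum and integral. This can be avoided altogether, if desired, by writing $P_t^D f=\sum_k e^{-\lambda_k t}a_k\phi_k$ in $L^2(D)$ (valid because $P_t^D$ is bounded self-adjoint with this spectral decomposition), and then computing $\langle f,P_t^D f\rangle_{L^2(D)}=\sum_k e^{-\lambda_k t} a_k^2$ directly from orthonormality, so the double integral identity follows without invoking pointwise kernel expansions.
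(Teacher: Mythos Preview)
Your proof is correct and follows essentially the same approach as the paper: both rely on Parseval's identity $\int f^2=\sum_k a_k^2$ and the spectral expansion $\iint f(w)f(z)p_D(t,z,w)\,dz\,dw=\sum_k e^{-\lambda_k t}a_k^2$, from which the two inequalities are immediate. Your remarks on justifying the expansion via $\langle f,P_t^D f\rangle$ are a welcome addition but do not change the argument.
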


\begin{proof}
The result obtains from the identities
\begin{eqnarray*}\int f^2(w)dw &=& \sum_k\left(\int f(w)\phi_k(w)dw\right)^2,\\
\int \int f(w) f(z) p_D(t,z,w)dzdw&=&\sum_k e^{-\lambda_k t}\left(\int f(w)\phi_k(w)dw\right)^2.
\end{eqnarray*}
\end{proof}

The following general bound is an easy consequence of Lemma \ref{L2}.
\begin{lem} \label{UB1}Let $t_0>0$. For $t\ge t_0$ and $x,y\in D$,
 \begin{eqnarray*}p_D\(t,x,y\) \le |D|\,p_{t_0/4}(0)^2 \, \p^x\(\tau_{D}>\frac{t_0}4\) \p^y\(\tau_{D}>\frac{t_0}4\)  e^{\lambda_1t_0}e^{-\lambda_1t}. \end{eqnarray*}
  \end{lem}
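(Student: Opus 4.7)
The plan is a three-step Chapman--Kolmogorov decomposition of $p_D$, followed by the spectral bound of Lemma~\ref{L2} applied to $f=\mathbf 1_D$. Since $t\ge t_0$, we begin with
$$p_D(t,x,y)=\iint p_D\!\left(\tfrac{t_0}{2},x,u\right) p_D(t-t_0,u,v)\, p_D\!\left(\tfrac{t_0}{2},v,y\right) du\,dv,$$
and then handle the two end factors and the middle factor separately.

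For each end factor, split the time once more and bound one piece pointwise by the free heat kernel at zero. By Chapman--Kolmogorov and the unimodal bound $p_D(t_0/4,w,u)\le p_{t_0/4}(w-u)\le p_{t_0/4}(0)$,
$$p_D\!\left(\tfrac{t_0}{2},x,u\right)=\int p_D\!\left(\tfrac{t_0}{4},x,w\right)p_D\!\left(\tfrac{t_0}{4},w,u\right)dw\le p_{t_0/4}(0)\int p_D\!\left(\tfrac{t_0}{4},x,w\right)dw=p_{t_0/4}(0)\,\p^x\!\left(\tau_D>\tfrac{t_0}{4}\right),$$
uniformly in $u$, and symmetrically for $p_D(t_0/2,v,y)$. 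Because these estimates are independent of the integration variables, they pull outside the double integral.

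For the remaining middle integral, invoke Lemma~\ref{L2} with $f=\mathbf 1_D$; since $p_D$ vanishes outside $D\times D$, the upper bound there reads
$$\iint p_D(t-t_0,u,v)\,du\,dv\le e^{-\lambda_1(t-t_0)}\int \mathbf 1_D^2=|D|\,e^{-\lambda_1(t-t_0)}.$$
Multiplying the three estimates and rewriting $e^{-\lambda_1(t-t_0)}=e^{\lambda_1 t_0}e^{-\lambda_1 t}$ yields the claim. There is no real obstacle; the only structural point worth highlighting is arranging the two end splits so that \emph{one} piece in each is estimated in $L^\infty$, contributing the prefactor $p_{t_0/4}(0)^2$, while the \emph{other} is integrated to produce the survival probabilities $\p^x(\tau_D>t_0/4)\p^y(\tau_D>t_0/4)$. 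This decoupling of $u,v$ from $x,y$ is what makes Lemma~\ref{L2} applicable on the middle slice and delivers the $|D|\,e^{-\lambda_1 t}$ decay.
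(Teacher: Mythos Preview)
Your proof is correct and essentially identical to the paper's: the same three-fold Chapman--Kolmogorov split at times $t_0/2,\ t-t_0,\ t_0/2$, the same $L^\infty$--survival bound on the end factors (which the paper packages as the first inequality of Lemma~\ref{UB}), and the same application of Lemma~\ref{L2} with $f=\mathbf 1_D$ on the middle slice.
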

\begin{proof}
Let $t_0>0$. We use Lemma \ref{UB} and    Lemma \ref{L2} with  $f\equiv I_D$. Then for $t>t_0$,
\begin{eqnarray*}
p_D(t,x,y)&=&\int \int p_D\(\frac{t_0}{2},x,z\)p_D\(t-t_0,z,w\)p_D\(\frac{t_0}{2},w,y\)dzdw\\
&\le& [p_{t_0/4}(0)]^2 \p^x\(\tau_{D}>\frac{t_0}4\) \p^y\(\tau_{D}>\frac{t_0}4\) \int \int  p_D\(t-t_0,z,w\)dzdw\\
&\le& |D|\,p_{t_0/4}(0)^2\, \p^x\(\tau_{D}>\frac{t_0}4\) \p^y\(\tau_{D}>\frac{t_0}4\)  e^{\lambda_1t_0}e^{-\lambda_1t}.\\
\end{eqnarray*}
\end{proof}
We now discuss the corresponding lower bound.
\begin{lem} \label{LB1} If $t_0>0$ and
$c_*>0$ are such that
\begin{equation} \label{lower_100}
p_D\left(\frac{t_0}{2},x,z\right)
\ge c_*   \p^x\left(\tau_{D}>\frac{t_0}{2}\right) \p^z\left(\tau_{D}>\frac{t_0}{2}\right), \quad x, z \in D,
\end{equation}
then for $t\ge t_0$ and $x,y\in D$,
$$p_D(t,x,y)\ge \(\frac{c_*}{\sqrt{|D|}p_{t_0/2}(0)}\)^2e^{-\lambda_1t_0} \p^x\(\tau_{D}>\frac{t_0}{2}\) \p^y\(\tau_{D}>\frac{t_0}{2}\)
  e^{-\lambda_1t}.$$
  \end{lem}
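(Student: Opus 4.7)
\textbf{Proof plan for Lemma \ref{LB1}.} The plan mirrors the structure of Lemma~\ref{UB1}, trading Cauchy--Schwarz upper bounds for the lower bound afforded by Lemma~\ref{L2} and the hypothesis \eqref{lower_100}. First, for $t \ge t_0$ I would invoke the Chapman--Kolmogorov identity to split
\begin{equation*}
p_D(t,x,y) = \int\!\!\int p_D\!\left(\tfrac{t_0}{2},x,z\right) p_D(t-t_0,z,w)\, p_D\!\left(\tfrac{t_0}{2},w,y\right) dz\, dw,
\end{equation*}
and then apply the assumption \eqref{lower_100} to the two outer factors, pulling $c_*^2\, \p^x(\tau_D>t_0/2)\, \p^y(\tau_D>t_0/2)$ outside the integral. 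What remains is
\begin{equation*}
\int\!\!\int f(z)\, p_D(t-t_0,z,w)\, f(w)\, dz\, dw,\qquad f(w):=\p^w(\tau_D>t_0/2),
\end{equation*}
which is exactly the quadratic form to which Lemma~\ref{L2} applies, giving the lower bound $e^{-\lambda_1(t-t_0)}\bigl(\int f\phi_1\bigr)^2$.

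Next I would evaluate $\int f\phi_1$ exactly using the eigenfunction relation and symmetry of $p_D$. Writing $f(w)=\int_D p_D(t_0/2,w,z)\,dz$, Fubini and symmetry yield
\begin{equation*}
\int f(w)\phi_1(w)\,dw = \int_D\!\int p_D(t_0/2,z,w)\phi_1(w)\,dw\,dz = e^{-\lambda_1 t_0/2}\int_D\phi_1(z)\,dz,
\end{equation*}
since $\phi_1$ is an eigenfunction with eigenvalue $e^{-\lambda_1 t_0/2}$ for $p_D(t_0/2,\cdot,\cdot)$. So the problem reduces to bounding $\int_D\phi_1$ from below in the correct form.

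The crux step is this lower bound on $\int_D\phi_1$. For it I would combine two simple facts: from the eigenfunction identity and $p_D(t_0/2,\cdot,\cdot)\le p_{t_0/2}(0)$ together with Cauchy--Schwarz $\int_D\phi_1 \le \sqrt{|D|}\,\|\phi_1\|_2 = \sqrt{|D|}$, one obtains
\begin{equation*}
\|\phi_1\|_\infty \le e^{\lambda_1 t_0/2}\, p_{t_0/2}(0)\int_D\phi_1 \le e^{\lambda_1 t_0/2}\, p_{t_0/2}(0)\sqrt{|D|}.
\end{equation*}
Combining this with $1=\|\phi_1\|_2^2 \le \|\phi_1\|_\infty \int_D\phi_1$ yields
\begin{equation*}
\int_D\phi_1 \ge \frac{e^{-\lambda_1 t_0/2}}{\sqrt{|D|}\, p_{t_0/2}(0)}.
\end{equation*}
This is the step where care is needed to hit the exact constant appearing in the target estimate; a plain Cauchy--Schwarz bound would only give an upper bound on $\int_D\phi_1$, so the two-step chain via $\|\phi_1\|_\infty$ is essential.

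Finally I would assemble the pieces. From the previous paragraph, $\bigl(\int f\phi_1\bigr)^2 \ge e^{-2\lambda_1 t_0}/(|D|\, p_{t_0/2}(0)^2)$. Putting this into the Chapman--Kolmogorov inequality gives
\begin{equation*}
p_D(t,x,y) \ge c_*^2\,\p^x(\tau_D>t_0/2)\,\p^y(\tau_D>t_0/2)\, e^{-\lambda_1(t-t_0)}\cdot\frac{e^{-2\lambda_1 t_0}}{|D|\, p_{t_0/2}(0)^2},
\end{equation*}
and rearranging $e^{-\lambda_1(t-t_0)-2\lambda_1 t_0}=e^{-\lambda_1 t_0}e^{-\lambda_1 t}$ produces exactly the claimed bound. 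The main obstacle, as noted, is the pointwise sup bound on $\phi_1$ needed in the lower-bound chain; everything else is straightforward semigroup algebra.
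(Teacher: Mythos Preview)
Your proposal is correct and follows essentially the same route as the paper: Chapman--Kolmogorov split, apply \eqref{lower_100}, invoke the lower bound of Lemma~\ref{L2} with $f(w)=\p^w(\tau_D>t_0/2)$, and then bound $\int f\phi_1$ from below via an $L^\infty$ estimate on $\phi_1$ combined with $\|\phi_1\|_2=1$. The only cosmetic difference is that the paper uses the sharper pointwise bound $\phi_1(x)\le \sqrt{|D|}\,p_{t_0/2}(0)\,e^{\lambda_1 t_0}\,\p^x(\tau_D>t_0/2)$ (coming from Lemma~\ref{UB} at time $t_0$) to bound $\int f\phi_1$ directly, whereas you first rewrite $\int f\phi_1=e^{-\lambda_1 t_0/2}\int_D\phi_1$ and then bound $\int_D\phi_1$; both paths yield the identical constant.
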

\begin{proof}
Since $\lambda_1$ is the eigenvalue corresponding to $\phi_1$,

$$\phi_1(x) = e^{2\lambda_1s} \int p_D(2s,x,z)\phi_1(z)dz.$$
  From Lemma \ref{UB}, $p_D(2s,x,z) \le \p^x(\tau_{D}>s) p_s(0)$, and by Schwartz inequality,
\begin{eqnarray*}\phi_1(x) &\le&  e^{2\lambda_1s}  p_s(0) \p^x(\tau_{D}>s) \int \phi_1(z)dz
\leq \sqrt{|D|}\ e^{2\lambda_1s}  \p^x(\tau_{D}>s)  p_s(0).\end{eqnarray*}
Taking $s= {t_0}/2$,
we obtain
$$\phi_1(x) \le\sqrt{|D|} \, p_{t_0/2}(0) \ e^{\lambda_1t_0} \p^x\left(\tau_{D}>\frac{t_0}2\right),   $$
which in turn yields
\begin{equation} \label{eigen10}1=\int \phi_1^2(z)dz \le  \sqrt{|D|}\, p_{t_0/2}(0) \ e^{\lambda_1t_0}\int \phi_1(z) \p^z\left(\tau_{D}>\frac{t_0}2\right)dz.   \end{equation}

Let $t> t_0$. By (\ref{lower_100}), Lemma \ref{L2} with $f(z)=  \p^z(\tau_{D}>{t_0}/{2})$  and  (\ref{eigen10}) we have
\begin{eqnarray*}
p_D(t,x,y)&=&\int \int p_D\(\frac{t_0}{2},x,z\)p_D\(t-t_0,z,w\)p_D\(\frac{t_0}{2},w,y\)dzdw\\
&\geq&c_*^2 \p^x\(\tau_{D}>\frac{t_0}{2}\) \p^y\(\tau_{D}>\frac{t_0}{2}\)\\
 &&\times \int \int \p^z\(\tau_{D}>\frac{t_0}{2}\) \p^w\(\tau_{D}>\frac{t_0}{2}\)  p_D\(t-t_0,z,w\)dzdw\\
&\ge & c_*^2 \p^x\(\tau_{D}>\frac{t_0}{2}\) \p^y\(\tau_{D}>\frac{t_0}{2}\)
  e^{-\lambda_1\(t-t_0\)} \left(\int \p^w\(\tau_{D}>\frac{t_0}{2}\) \phi_1(w) dw\right)^2\\
&\ge &\(\frac{c_*}{\sqrt{|D|}p_{t_0/2}(0)}\)^2e^{-\lambda_1t_0} \p^x\(\tau_{D}>\frac{t_0}{2}\) \p^y\(\tau_{D}>\frac{t_0}{2}\)
  e^{-\lambda_1t}. \\
\end{eqnarray*}
\end{proof}
\noindent
Lemma~\ref{UB1} and \ref{LB1} indicate the asymptotics of the killed semigroup for large times.

In what follows, we interchangeably write $\lambda_1(D)=\lambda_1$. Here is a sharp bound for the first eigenvalue in terms of $V$.
\begin{prop} \label{eigenvApprox}
Let $D$ be an open bounded set containing a ball of radius $r$. Then
 $$\frac 1{8} \left(\frac r  {\diam D} \right)^{2} \le  \lambda_1(D)V^2(r) \le  c \left(\frac {\diam D} r\right)^{d/2},$$
where  $c=c(d)$.
\end{prop}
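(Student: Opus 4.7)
The plan is to split the two inequalities and in each case reduce to a ball via domain monotonicity. For the \emph{lower bound}, I will first fix $x_0\in D$ with $B(x_0,r)\subset D$ and set $R=\diam D$, so $R\ge 2r$ and $D\subset B(x_0,R)$. Domain monotonicity together with translation invariance gives $\lambda_1(D)\ge\lambda_1(B_R)$. For the ball, Schur's test applied to the symmetric positive integral operator $G_{B_R}$ gives $\|G_{B_R}\|_{L^2\to L^2}\le\|G_{B_R}\mathbf{1}\|_\infty$, and by radial monotonicity of $G_{B_R}(0,\cdot)$ the right-hand side equals $\E^0\tau_{B_R}$, so that $\lambda_1(B_R)\ge 1/\E^0\tau_{B_R}$. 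The main analytic input I will then invoke is the Pruitt-type estimate $\E^0\tau_{B_R}\le 2V^2(R)$, which is a classical bound for unimodal L\'evy processes following \cite{1981P} and the methods used in \cite{BGR2}, modulo $h(R)\approx 1/V^2(R)$ from Lemma~\ref{ch1V}. Combined with subadditivity \eqref{subad} in the form $V(R)\le(R/r+1)V(r)\le 2(R/r)V(r)$ for $R\ge r$, hence $V^2(R)\le 4(R/r)^2V^2(r)$, this yields exactly $\lambda_1(D)V^2(r)\ge\tfrac{1}{8}(r/\diam D)^2$.

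For the \emph{upper bound} I will apply the Rayleigh--Ritz principle to the test function $\phi(x)=(r/2-|x-x_0|)_+$, which is Lipschitz with constant $1$, bounded by $r/2$, and compactly supported in $B(x_0,r/2)\subset D$; in particular $\phi$ lies in the Dirichlet form domain of the process killed on leaving $D$. A direct polar-coordinate computation gives $\|\phi\|_2^2\ge c(d)\,r^{d+2}$. Combining the Lipschitz bound $(\phi(x)-\phi(y))^2\le|y-x|^2$ with the sup-norm bound $(\phi(x)-\phi(y))^2\le(r/2)^2$ produces the mixed estimate $(\phi(x)-\phi(y))^2\le r^2(|y-x|^2/r^2\wedge 1)$, and recognising Pruitt's function from \eqref{def:GKh2} yields
\[
\mathcal{E}^D(\phi,\phi)\le |B(x_0,r/2)|\cdot r^2 h(r)\le c(d)\,r^{d+2} h(r).
\]
Dividing and applying $h(r)\approx 1/V^2(r)$ (Lemma~\ref{ch1V}) gives in fact the stronger bound $\lambda_1(D)V^2(r)\le c(d)$, which implies the claim since $\diam D\ge r$ forces $(\diam D/r)^{d/2}\ge 1$.

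The main obstacle I foresee is calibrating the constant in $\E^0\tau_{B_R}\le cV^2(R)$ tightly enough (namely $c\le 2$) to reproduce the sharp $1/8$ on the left-hand side: it can be extracted by iterating the strong Markov property together with \eqref{eq:2} or \eqref{eq:3}, but it requires careful bookkeeping of constants. The upper bound, by contrast, is quite soft, and the $(\diam D/r)^{d/2}$ slack in the statement suggests the author simply uses the easy Rayleigh-quotient estimate $\lambda_1 V^2(r)\le c(d)$ and lets the ambient factor $\ge 1$ absorb any suboptimality.
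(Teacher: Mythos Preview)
Your argument is correct, and for the \emph{upper bound} it is genuinely different from (and sharper than) the paper's. The paper quotes the moment inequality
\[
\lambda_1(D)\le \frac{\int_D \E^x\tau_D\,dx}{\int_D (\E^x\tau_D)^2\,dx}\le \sqrt{\frac{|D|}{\int_D (\E^x\tau_D)^2\,dx}}
\]
from \cite{BanKul}, feeds in $\E^x\tau_D\ge V^2(r)/\Ci$ on $B(x_0,r/2)$ via \eqref{eq:l}, and picks up the factor $\sqrt{|D|/|B_{r/2}|}\le c(\diam D/r)^{d/2}$. Your Rayleigh-quotient computation with the cone function $\phi$ and the recognition of Pruitt's $h(r)$ in \eqref{def:GKh2} is more elementary, avoids the exit-time lower bound entirely, and yields the stronger conclusion $\lambda_1(D)V^2(r)\le c(d)$ with no dependence on $\diam D/r$; the $(\diam D/r)^{d/2}$ in the statement is then indeed pure slack.

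For the \emph{lower bound} the two arguments are essentially the same: both come down to $\lambda_1\ge 1/\sup_x\E^x\tau$ (your Schur-test step is exactly this operator-norm bound) together with $\sup_x\E^x\tau_D\le 2V^2(\diam D)$ and subadditivity of $V$. The paper, however, applies the exit-time bound directly to $D$ rather than to an enclosing ball, citing \cite[Lemma 2.3]{BGR2} for $\sup_x\E^x\tau_D\le 2V^2(\diam D)$ verbatim. This sidesteps the constant-calibration worry you flag: your detour through $B_R$ with $R=\diam D$ would, via the same lemma, only give $2V^2(2R)\le 8V^2(R)$ and hence $1/32$ rather than $1/8$. (Also, your justification that $\sup_x\E^x\tau_{B_R}=\E^0\tau_{B_R}$ via ``radial monotonicity of $G_{B_R}(0,\cdot)$'' is not quite the right invariant---you need monotonicity of $x\mapsto\E^x\tau_{B_R}$, not of $y\mapsto G_{B_R}(0,y)$---but this is moot once you drop the detour.)
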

\begin{proof}
The following bound is proved in \cite[Proposition 2.1]{BanKul}:
$$\frac{ 1}{ \sup_x \E^x\tau_D }\le \lambda_1(D)\le \frac{ \int_D \E^x\tau_D dx}{ \int_D (\E^x\tau_D)^2 dx}. $$
By the  Cauchy-Schwarz inequality,
$$\frac{ \int_D \E^x\tau_D dx}{ \int_D (\E^x\tau_D)^2 dx}\le \sqrt{ \frac {|D|}{ \int_D (\E^x\tau_D)^2 dx}}. $$
Let $B(x_0,r)\subset D$.
By \cite[Lemma 2.3]{BGR2},
$\sup_x \E^x\tau_D \le 2V^2(\diam D)$ and by \eqref{eq:l}, \newline $\inf_{x\in B(x_0,r/2)} \E^x\tau_D \ge V^2(r)/\Ci$. Hence,
$$
\lambda_1(D)\le \Ci\frac{ {1}}{V^2(r)} \sqrt{\frac{ |D|}{ |B(x_0,r/2)|}}\le
\frac {\Ci 2^{d/2}}{V^2(r)} \left(\frac {\diam D} r\right)^{d/2} $$
and
$$ \lambda_1(D)\ge \frac 1{2V^2(\diam D)}= \frac1 {2V^2(r)} \frac{ V^2(r)}{V^2(\diam D)}\ge \frac1 {8V^2(r)} \left(\frac{ r}{\diam D}\right)^2, $$
where in the last step we used subadditivity of $V$.
\end{proof}

Here is the main result of this section (cf. Remark \ref{ScAgree} for our notational conventions).
\begin{thm}\label{hkC11_gen} Let $\psi\in {\rm WLSC }\cap{\rm WUSC}$. There is $r_0= r_0(d,\psi)>0$ such that if $0<r< r_0$ and  open $D\subset \Rd$ is bounded and $C^{1,1}$  at scale $r$, and $\nu(\diam D)>0$, then
for all $x,y\in \Rd$, $t>0$,
\begin{equation}
p_D(t,x,y)\approx \p^x(\tau_D>t/2)\p^y(\tau_D>t/2)p\(t\wedge V^2(r),x,y\)
\end{equation}
and
\begin{equation}\label{sbgenC11W}
\p^x\(\tau_{D}>t\)\approx  e^{-\lambda_1 t} \(\frac{V(\delta_D(x))}{\sqrt{t}\wedge V(r)}\wedge 1\).
\end{equation}
If the scalings
are global, then we may take $r_0=\infty$ and
comparability constants depending only on $d$, $\diam D/r$
and  scaling characteristics of $\psi$.
\end{thm}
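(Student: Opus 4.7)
The plan is to split time at a threshold $t_0 := c V^2(r)$, using for $0 < t \le t_0$ the short-time estimates of Sections~\ref{sec:UpperBound} and~\ref{sec:LowerBound}, and for $t > t_0$ the spectral Lemmas~\ref{UB1} and~\ref{LB1}, then to glue the two halves. First I would choose $r_0 = r_0(d,\psi)$ small enough that both $\mathrm{WLSC}$ and $\mathrm{WUSC}$ operate on scales $\le r$ whenever $r < r_0$; this secures ${\bf G}_R$ at $R = r$ (Lemma~\ref{Gr_Ex}), condition $\A$ (Case~3), and positivity of $\mathcal{I}(r), \mathcal{J}(r), \nu(r)$. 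Then pick the constant $c$ small enough for Corollary~\ref{heatKernelCompl1}, Theorem~\ref{lower_hk_estimate} and Lemma~\ref{lem:spgenC11} all to apply up to time $t_0$.

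For $t \le t_0$ the outer-ball part of the $C^{1,1}$ condition combined with Corollary~\ref{heatKernelCompl1} and the density comparison of Lemma~\ref{densityApprox} gives
$$p_D(t,x,y) \le C \left(\tfrac{V(\delta_D(x))}{\sqrt{t}}\wedge 1\right)\left(\tfrac{V(\delta_D(y))}{\sqrt{t}}\wedge 1\right) p(t,x,y).$$
The inner-ball part of $C^{1,1}$ furnishes $B(z_x,r), B(z_y,r) \subset D$ with $\delta_{B(z_x,r)}(x) \ge \delta_D(x) \wedge (r/2)$, so that Theorem~\ref{lower_hk_estimate} applied to $B(z_x,r) \cup B(z_y,r)$ with domain monotonicity delivers the matching lower bound. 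Lemma~\ref{lem:spgenC11} provides $\p^x(\tau_D > t/2) \approx V(\delta_D(x))/\sqrt{t} \wedge 1$, and since Proposition~\ref{eigenvApprox} gives $\lambda_1 V^2(r) \le C(\diam D / r)^{d/2}$, the factor $e^{-\lambda_1 t}$ is comparable to $1$ on $(0, t_0]$. Hence both asserted comparisons hold on the short-time regime, with constants depending on $d$, $\psi$ and $\diam D / r$.

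For $t > t_0$, Lemma~\ref{UB1} yields
$$p_D(t,x,y) \le C_1 \p^x(\tau_D > t_0/4)\p^y(\tau_D > t_0/4) e^{-\lambda_1 t},$$
with $C_1$ depending on $|D|$, $p_{t_0/4}(0)$, $\lambda_1 t_0$, each controlled. To apply Lemma~\ref{LB1} I must verify \eqref{lower_100} at $t_0/2$: this follows from the short-time factorization just proved together with the radial-monotonicity lower bound $p(t_0/2,x,z) \ge p(t_0/2, \diam D) > 0$, producing a constant $c_*$ depending on $d, \psi, \diam D / r$. Integrating the resulting two-sided bounds over $y \in D$ and feeding in the short-time estimate of $\p^y(\tau_D > \cdot)$ gives $\p^x(\tau_D > t) \approx e^{-\lambda_1 t}(V(\delta_D(x))/V(r) \wedge 1)$ for $t > t_0$, which is \eqref{sbgenC11W} on the range $\sqrt{t} \wedge V(r) = V(r)$. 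Reinserting this into the spectral bounds produces the factorization on $(t_0, \infty)$.

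It remains to reconcile the stated factor $p(t \wedge V^2(r), x, y)$: on the short-time side $p(t \wedge V^2(r), x, y) = p(t, x, y)$ and we already have a match; on the long-time side Lemma~\ref{densityApprox} yields $p(V^2(r), x, y) \approx \min\{r^{-d},\, V^2(r)/[V^2(|x-y|)|x-y|^d]\}$, which is bounded above and below on $|x-y|\le \diam D$ by constants depending on $\diam D / r$ and is absorbed in the comparability. Under global scalings one may take $r_0 = \infty$; every step above is uniform and the implicit constants depend only on $d$, the scaling characteristics of $\psi$, and $\diam D / r$, the latter entering through Proposition~\ref{eigenvApprox} and the free-kernel lower bound. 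The main obstacle is to trace the constant $c_*$ in \eqref{lower_100} cleanly through Theorem~\ref{lower_hk_estimate} and Remark~\ref{lower_scaling1} so that only the permitted parameters enter, since this constant controls the prefactor in the long-time lower bound and ultimately the dependence of the comparability in the global-scaling conclusion.
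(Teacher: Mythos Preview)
Your plan is correct and matches the paper's proof closely: the paper likewise splits at $t_0=V^2(r)$ (with an auxiliary $t_2=c\,t_0$), uses Corollary~\ref{heatKernelCompl1} and Theorem~\ref{lower_hk_estimate} via domain monotonicity for small time, Lemmas~\ref{UB1}--\ref{LB1} for large time, Lemma~\ref{lem:spgenC11} and Proposition~\ref{eigenvApprox} to control the eigenvalue and survival probability, and then traces all constants through $\diam D/r$ in the global-scaling case. One point to make explicit when you write it out: Theorem~\ref{lower_hk_estimate} delivers a lower bound in terms of $t\,\nu(2|x-y|\wedge\diam D)$, not $p(t,x,y)$ directly, and Lemma~\ref{densityApprox} only gives the two-sided density bound for $|x|<r_0$; the hypothesis $\nu(\diam D)>0$ is exactly what lets you extend the lower bound $\nu(s)\gtrsim 1/[V^2(s)s^d]$ to all $s\le\diam D$ (via Lemma~\ref{Gr_Ex}), which is how both your short-time lower factorization and your verification of \eqref{lower_100} become quantitative rather than merely positive.
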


\begin{proof}
Define $t_0=V^2(r)$, and for $x\in
\Rd,\ t>0, \ s>0$,
 $$\tilde{p}_t(x)=
p_{t/2}(0) \wedge \frac {t}{V^2(|x|) |x|^d},$$
$$\hat{p}_t(x)=   p_{t/2}(0)\wedge [t\,\nu(|2x|\wedge \diam(D))],$$
$$\phi(t,x,s)= \frac{V(\delta_D(x))}{\sqrt{t}\wedge V(s)}\wedge1,$$
with the convention that $\tilde{p}_t(0)=p_{t/2}(0)=\hat{p}_t(0)$.
Clearly, $\tilde{p}_t(x)$ and $\hat{p}_t(x)$ are nonincreasing functions of $|x|$, $\phi(t,x,s)$ is nonincreasing in $t$,
$$\hat{p}_t(x)/\tilde{p}_t(x)=   p_{t/2}(0)/[\frac {t}{V^2(|x|) |x|^d}]\wedge [\nu(|2x|\wedge \diam(D))]/[{1}/{V^2(|x|) |x|^d}],$$
$\hat{p}_t(x)/\hat{p}_t(y)\ge \nu(\diam(D))/\nu(|2y|\wedge \diam(D))$, and $k\hat{p}_{t}(x)\ge \hat{p}_{kt}(x)$ for $k\ge 1$.
Since $\psi\in {\rm WLSC }\cap{\rm WUSC}$, by Lemma~\ref{densityApprox} and Lemma~\ref{sup_p_t},
\begin{equation}\label{LUB}c^{*} \tilde{p}_t(x)  \le p_t(x)   \le \Cf \tilde{p}_t(x), \end{equation}
where $c^*=c^*(d,\psi)\le 1$, $r_0=r_0(d,\psi)$,
$|x|<r_0$,
$0<t<  t_1$
and $t_1= V^2(r_0)$. The upper bound in \eqref{LUB} even holds for all $t>0$, $x\in \Rd$. In particular, \eqref{A}, in fact ${\bf G}_{r_0}$, hold with $\Cl =c(d)/c^*$.
By letting $t\to 0$ in \eqref{LUB}, we get for $|x|<r_0$,
 \begin{equation}\label{LUBml}\frac {c^* }{V^2(|x|) |x|^d}  \le \nu(x)   \le  \frac {\Cf  }{V^2(|x|) |x|^d}. \end{equation}

If
$r_0\le \diam D$, then we can extend the lower bound in \eqref{LUB} and \eqref{LUBml} to $r_0\le |x|\le \diam D$ and $0<t\leq t_0$ due to Lemma \ref{Gr_Ex}.
 For $s=|2x|\wedge\diam D$, by \eqref{LUBml} and subadditivity of $V$,
$$\nu(s)\geq \frac{c^*}{V^2(s)s^d}\geq \frac{c^*}{V^2(2|x|)2^d|x|^d}\geq  \frac{c^*}{2^{d+2}}\frac{1}{V^2(|x|)|x|^d}.$$
By the definitions of $\hat p$ and $ \tilde p$, for $x,y\in D$ and $0<t\leq t_0$ we have  \begin{equation}\label{comp10}\hat{p}_t(x-y)\geq  \frac{c^*}{2^{d+2}}\tilde{p}_{t}(x-y)\geq  \frac{c^*}{2^{d+2}\Cf}p(t,x,y).\end{equation}

From now on we let $t>0$ and $x,y \in D$
(additional restrictions are indicated as we proceed).
We may  assume that
$\psi\in\WLSC{\la}{ \lt}{\lC}$ with  ${ \lt}= 1/r$,  cf. Remark~\ref{rem:ps}.
By Corollary \ref{heatKernelCompl1},
\begin{equation}\label{UB10} p_{D}(t, x, y)
    \le  c_1  \phi(t,x,r)\phi(t,y,r)\tilde{p}_t(x-y),\qquad t\le t_0.
\end{equation}
where
$c_1=
c  H_{r}^2\mathcal{J}(r)^{-4} \lC^{-1-d/\alpha}$ and $c=c(d,
\la)$. To facilitate justification of the last statement of the theorem, all enumerated constants are fixed throughout the proof.
Let $\lambda_1= \lambda_1(D)$.
By \cite[Lemma 6.2]{BGR2} there is $c_2=c_2(d,r, \psi)$ such that $\p^x\(\tau_{D}>t/4\)\le \sqrt{c_2} \phi(t,x,r)$.
 By Lemma \ref{UB1}, for $t\ge t_0$
we have
 \begin{eqnarray}p_D\(t,x,y\) &\le& |D|\,p^2_{t_0/4}(0) \, \p^x\(\tau_{D}>\frac{t_0}4\) \p^y\(\tau_{D}>\frac{t_0}4\)  e^{\lambda_1t_0}e^{-\lambda_1t}\nonumber\\
 &\le& c_2 |D|\,p^2_{t_0/4}(0)\, \phi(t_0,x,r)\phi(t_0,y,r)   e^{\lambda_1t_0}e^{-\lambda_1t}\nonumber\\
 &\le& c_2 |D|\,\frac{p^2_{t_0/4}(0)}{p_{t_0}( \diam D) }e^{\lambda_1t_0} \, \phi(t_0,x,r)\phi(t_0,y,r)p({t_0},x,y)   e^{-\lambda_1t}.
  \label{UB11}\end{eqnarray}
 Combining \eqref{UB10} and \eqref{UB11} with \eqref{LUB}
 we get
   \begin{equation}p_D\(t,x,y\) \le c_3 \phi(t,x,r)\phi({t},y,r) p({t\wedge t_0},x,y)   e^{-\lambda_1t},\quad {t>0},\label{UB13} \end{equation}
 where $c_3= \left(c_2 |D|\,{p^2_{t_0/4}(0)}/p_{t_0}(\diam D)+c_1/c^*\right) e^{\lambda_1t_0}$.

We now give
a similar lower bound.
By Theorem \ref{lower_hk_estimate} and domain monotonicity of heat kernels,
there is $c=c(d, \psi)<1$ such that for $t\le t_2:=c t_0$,
\begin{equation}\label{LB10}p_{D}(t,x,y)\ge   c_4  \phi( t,x,r)\phi(t,y,r)
	 \hat{p}_t(x-y),\end{equation}
	 with $c_4={ c(\la,d) \lC^{1+d/\la}} {{H_{r}^{-2}}(\Cl )^{{-{9}-d/\la}}}$.	
Since
$\phi(t,x,r)\ge \p^x\(\tau_{D}>t \)/{\sqrt{c_2}}$, we have
$$ p_{D}(t,x,y)\ge   {c_5} \p^x\(\tau_{D}>t \)\p^y\(\tau_{D}>t \)
		\hat{p}_t(x-y), \qquad t\le t_2.$$
	where $c_5= {c_4}/{c_2}$.	
		In particular,
 \begin{eqnarray*} p_{D}(t_2/2,x,y)&\ge&  {c_5} \hat{p}_{t_2/2}(\diam D)  \p^x\(\tau_{D}>t_2/2 \)\p^y\(\tau_{D}>t_2/2 \).
		 \end{eqnarray*}
By Lemma~\ref{L5a1} there is $c_6=c_6(d,r,\psi)$ such that $\p^z\(\tau_{D}>t_2/2\)\ge\sqrt{c_6} \phi( t_2,z,r), \ z\in D$. By Lemma \ref{LB1} for $t\ge t_2$ we have
 \begin{eqnarray*}p_D(t,x,y)&\ge& \(\frac{ c_5 \hat{p}_{t_2/2}(\diam D)}{\sqrt{|D|}p_{t_2/2}(0)}\)^2e^{-\lambda_1t_2} \p^x\(\tau_{D}>\frac{t_2}{2}\) \p^y\(\tau_{D}>\frac{t_2}{2}\)
  e^{-\lambda_1t}\\&\ge& c_6 \(\frac{ c_5 \hat{p}_{t_2/2}(\diam D)}{\sqrt{|D|}p_{t_2/2}(0)}\)^2e^{-\lambda_1t_2} \phi( t_2,x,r)\phi(t_2,y,r)e^{-\lambda_1t}. \end{eqnarray*}
By the above-mentioned monotonicity properties of $\hat{p}$,
for $t\ge t_2$ we have
    \begin{equation}\label{LB11}p_D(t,x,y)\ge c_7 \hat{p}_{t_2}(x-y)\phi( t_2,x,r)\phi(t_2,y,r)
  e^{-\lambda_1t}, \end{equation}
  where
  $c_7=  c_6 { c_5^2 e^{-\lambda_1t_2}\hat{p}^2_{{t_2}/2}(\diam D)}/\[{p_{t_2/2}^2(0)|D|\hat{p}_{t_2}(0)}\]$. Combining \eqref{LB10} and  \eqref{LB11} we get
    \begin{equation}\label{LB11212}p_D(t,x,y)\ge (c_4\wedge c_7) \hat{p}_{t\wedge t_2}(x-y)\phi( t\wedge t_2,x,r)\phi(t\wedge t_2,y,r)
  e^{-\lambda_1t}\qquad (t>0).\end{equation}
Since $\hat{p}_{t\wedge t_2}(x-y)\ge ({t_2}/{t_0}) \hat{p}_{t\wedge t_0}(x-y)$,
by the aforementioned monotonicity of $\phi$ and \eqref{comp10},
     \begin{eqnarray}p_D(t,x,y)&\ge& c_8 \hat{p}_{t\wedge t_0}(x-y)\phi( t\wedge t_0,x,r)\phi(t\wedge t_0,y,r)
  e^{-\lambda_1t}\nonumber\\&\ge& c_8 c_9 {p}({t\wedge t_0},x,y)\phi( {t},x,r)\phi({t},y,r)
  e^{-\lambda_1t},\label{LB12}  \end{eqnarray}
  where   $c_8= ({t_2}/{t_0}) (c_4\wedge c_7)$, and  $c_9= {c^*/(2^{d+2}\Cf)}>0$.
 Combining \eqref{UB13} with \eqref{LB12} we get
\begin{equation}\label{hkC11dow}
p_D\(t,x,y\) \approx \phi(t,x,r)\phi(t,y,r) p(t\wedge t_0,x,y)   e^{-\lambda_1t}.
\end{equation}
 Now we prove \eqref{sbgenC11W}. The upper bound:
$\p^x(\tau_D>t)\leq c_3 e^{-\lambda_1 t}\phi(t,x,r)$ is an easy consequence of \eqref{hkC11dow} since $\phi(t,y,r)\leq 1$ and $\int_{\Rd} {p}(t\wedge t_0,x,y)dy=1$.   Lemma \ref{lem:spgenC11} implies the lower bound for $t\leq t_2$, cf. \eqref{eq:dcg}. If $t>t_2$ then, by \eqref{LB11212} and monotonicity of $\phi$,
$$\p^x(\tau_D>t)\geq (c_4\wedge c_7) e^{-\lambda_1 t} \phi(t_2,x,r)\hat{p}_{t_2}(\diam D)\int_D
\phi(t_2,y,r)dy\geq c_{10} e^{-\lambda_1 t} \phi(t,x,r) ,$$
where $c_{10}=(c_4\wedge c_7)  \hat{p}_{t_2}(\diam D)\int_D
\phi(t_2,y,r)dy$.

	 We now assume that $\psi$ satisfies global scaling conditions.
Then $t_1=r_0=\infty$.
We shall investigate the dependence of the constants
$c_1$-$c_{10}$
on $r$ and  $\diam D$. The constants $c_1$, $c_2$, $c_4$-$c_6$, $c_9$ depend only on  the dimension and $\psi$ (through scaling characteristics), but not on $r$ or $\diam D$. This is due to  \cite[Proposition 5.2, Lemma~7.2 and Lemma~7.3]{BGR2} and \cite[Proposition~3.5]{2012GR}, which imply that  quantities   $\mathcal{J}(s)$,  $\mathcal{I}(s)$ and $H_s$ are uniformly bounded in $s\in(0, \infty)$  from below and above  by two positive constants.    {Furthermore, $c_8$ depends only on $c_7$,  $d$ and the scaling characteristics}. Therefore we only  need to inspect {$c_3$, $c_7$ and $c_{10}$.}
We claim that $c_3\le c^*_3=c^*_3({\diam D}/{r}, d,\psi)<\infty$,     $c_7\ge c^*_7=c^*_7({\diam D}/{r},d, \psi)>0$ {and $c_{10}\ge c^*_{10}=c^*_{10}({\diam D}/{r},d, \psi)>0$}.
The remaining comparisons in this proof depend only on $\psi$ and $d$.
We have $t_0/4\approx  t_2/2 \approx V^2(r)$, then $ p_{t_0/4}(0)\approx p_{t_2/2}(0) {=} \hat{p}_{t_2}(0)\approx r^{-d}$. Furthermore,
$$\tilde{p}_{t_0}(\diam D)\approx  \hat{p}_{\frac {t_2}2}(\diam D)\approx  {\hat{p}_{t_2}(\diam D)\approx}{p}_{t_0}(\diam D)\approx \frac {V^2(r)}{V^2(\diam D) ({ \diam D})^d},$$
and
$$\frac{|D|p_{t_0/4}(0)^2}{\tilde{p}_{t_0}( \diam D)}\approx  \frac {|D| (\diam D)^dV^2(\diam D)  }{V^2(r)r^{2d}}\le c \frac { (\diam D)^{2d}V^2(\diam D)  }{V^2(r)r^{2d}}\le c \(\frac {\diam D}{r}\)^{2d+2},$$
where in the last step we used subadditivity {\eqref{subad}} of $V$, and $c=c(d)$. By the same arguments,
$$\(\frac{  \hat{p}_{{t_2}/2}(\diam D)}{p_{t_2/2}(0)\sqrt{|D|\hat{p}_{t_2}({0})}}\)^2\ge  c^{-1} \(\frac {r} {\diam D}\)^{{3d+4}}.$$
By Proposition \ref{eigenvApprox} we have
$$  \lambda_1t_2 \le \lambda_1t_0 = \lambda_1 V^2(r)  \le c  \left(\frac {\diam D} {r} \right)^{d/2},$$
where $c=c(d)$. Furthermore,
 $$ \int_D \phi( t_0 ,y,r)dy\ge \int_{B_{r/2}} \(\frac{V(r/2)}{ V(r)}\wedge 1\)dy \ge  |B_{r/2}|/2,$$
{hence
$$\hat{p}_{t_2}(\diam D)\int_D \phi( t_0 ,y,r)dy\geq c \frac {V^2(r)r^d}{V^2(\diam D) ({ \diam D})^d}\geq c\(\frac{r}{\diam D}\)^{d+2}.$$}
The above arguments indeed  show that if the   global scaling {conditions  hold}, then the constants $c_1$-$c_{10}$ depend only through ${\diam D}/ {r}$,  $d$ { and the scaling characteristics of $\psi$}.

\end{proof}
Here is a simple consequence of Theorem~\ref{hkC11_gen}.
\begin{cor}\label{hk_kula2}If $D=B_{R}$, $\lambda_1(R)=\lambda_1(D)$   and
$\psi\in\WLSC{\la}{ 0}{\lC}\cap\WUSC{\ua}{ 0}{\uC}$,
 then
 \begin{eqnarray*} p_{D}(t, x, y)
    &\approx&  e^{-\lambda_1(R) t}\(\frac{V(\delta_D(x))}{\sqrt{t}\wedge V(R)}\wedge 1\)\(\frac{V(\delta_D(y))}{\sqrt{t}\wedge V(R)}\wedge 1\)p(t\wedge V^2(R), x,y)\\
   &\approx&  \p^x\(\tau_{D}>\frac{t}{2}\)\p^x\(\tau_{D}>\frac{t}{2}\) p(t\wedge V^2(R), x, y),\end{eqnarray*}
for all $x,y\in \R^d$ and $t>0$. The comparability  constants depend only on  $d$ and the scaling characteristics of $\psi$.
\end{cor}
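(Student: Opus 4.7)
The plan is to apply Theorem~\ref{hkC11_gen} directly to $D=B_R$. The key observation is that all the $R$-dependence should enter only through the ratio $\diam(D)/r$, where $r$ is the $C^{1,1}$ scale, and for a ball this ratio is a universal constant.

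First I would verify the hypotheses of Theorem~\ref{hkC11_gen}. The ball $B_R$ satisfies both the inner and outer ball conditions at scale $R$: for any $Q\in\partial B_R$, the ball $B_R$ itself is an inner ball through $Q$, and the ball centered at $2Q$ with radius $R$ is an outer ball through $Q$. Hence $D=B_R$ is $C^{1,1}$ at scale $R$, so we may take $r=R$. Under the assumed global scalings, Lemma~\ref{Gr_Ex} (together with \cite[Theorem~26]{2014-KB-TG-MR-jfa}) yields $\mathbf{G}_R$ for every $R>0$, and in particular $\nu(\diam D)=\nu(2R)>0$. Moreover, by the last statement of Theorem~\ref{hkC11_gen}, under global scalings we may take $r_0=\infty$, so there is no upper restriction on $R$.

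Next, I would simply invoke Theorem~\ref{hkC11_gen} with $r=R$. Its first conclusion becomes
\[
p_D(t,x,y)\approx \p^x(\tau_D>t/2)\,\p^y(\tau_D>t/2)\, p\bigl(t\wedge V^2(R),x,y\bigr),
\]
which is exactly the second comparison in the corollary. The second conclusion of Theorem~\ref{hkC11_gen} gives, after replacing $t$ by $t/2$ (which only changes $\sqrt{t}$ by $\sqrt{2}$, absorbed in the comparability constant, and $e^{-\lambda_1 t/2}\cdot e^{-\lambda_1 t/2}=e^{-\lambda_1 t}$),
\[
\p^x(\tau_D>t/2)\,\p^y(\tau_D>t/2)\approx e^{-\lambda_1(R)t}\Bigl(\tfrac{V(\delta_D(x))}{\sqrt{t}\wedge V(R)}\wedge 1\Bigr)\Bigl(\tfrac{V(\delta_D(y))}{\sqrt{t}\wedge V(R)}\wedge 1\Bigr).
\]
Combining these two displays yields the first comparison in the corollary.

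The only point requiring a moment's thought—and the main ``obstacle,'' though it is really just book-keeping—is tracking the dependence of constants. Theorem~\ref{hkC11_gen} asserts, under global scalings, that the comparability constants depend on $d$, on the scaling characteristics of $\psi$, and on $\diam(D)/r$. For $D=B_R$ and $r=R$ we have $\diam(D)/r=2$, a universal constant, so the resulting constants depend only on $d$ and the scaling characteristics of $\psi$, as claimed. This completes the proof.
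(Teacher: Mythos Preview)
Your proposal is correct and is exactly the argument the paper intends: the corollary is stated as ``a simple consequence of Theorem~\ref{hkC11_gen}'', and you have spelled out precisely that consequence, the key point being that with $r=R$ one has $\diam D/r=2$, so the last clause of Theorem~\ref{hkC11_gen} makes the constants depend only on $d$ and the scaling characteristics of $\psi$.
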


Such uniform estimates should be useful in approximation and scaling arguments, especially that $p_D$ is monotone in $D$.

\begin{cor}
Under the assumptions of Corollary \ref{hk_kula2}, let $\phi^R_1$ be the (positive) eigenfunction corresponding to
$\lambda_1(R)$.
 There is $c=c(d,\psi)$ such that
$$c^{-1}\frac{V(\delta_D(x))}{R^{d/2} V(R)}\leq \phi^R_1(x) \leq c \frac{V(\delta_D(x))}{R^{d/2} V(R)}, \qquad x \in \Rd.$$
\end{cor}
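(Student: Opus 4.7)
The plan is to exploit the fixed-point identity
$$\phi_1^R(x) = e^{\lambda_1(R)\,t_*}\int_{B_R} p_D(t_*,x,z)\,\phi_1^R(z)\,dz$$
at the natural time scale $t_*=V^2(R)$, and feed in the sharp two-sided estimate provided by Corollary~\ref{hk_kula2}. At $t_*=V^2(R)$ we have $\sqrt{t_*}\wedge V(R)=V(R)$, and since $\delta_D(x)\le R$ the truncation $V(\delta_D(x))/V(R)\wedge 1$ reduces to $V(\delta_D(x))/V(R)$. Moreover, for $x,y\in B_R$ one has $|x-y|\le 2R$, so by Lemma~\ref{densityApprox}, Lemma~\ref{sup_p_t}, subadditivity of $V$ and the global scalings, one checks that $p(V^2(R),x,y)\approx R^{-d}$ with constants depending only on $d$ and $\psi$. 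Substituting these observations into Corollary~\ref{hk_kula2} yields
$$p_D(V^2(R),x,z)\approx e^{-\lambda_1(R)V^2(R)}\,\frac{V(\delta_D(x))}{V(R)}\cdot\frac{V(\delta_D(z))}{V(R)}\cdot R^{-d}, \qquad x,z\in B_R.$$

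Plugging this into the eigenfunction identity, the exponential factors cancel and we obtain
$$\phi_1^R(x)\approx \frac{V(\delta_D(x))}{V(R)\,R^{d}}\cdot A, \qquad A:=\int_{B_R}\frac{V(\delta_D(z))}{V(R)}\,\phi_1^R(z)\,dz.$$
Thus the whole $x$-dependence already has the desired shape, and it remains only to pin down the scalar $A\approx R^{d/2}$.

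To do this, I square the last display and integrate, using the $L^2$-normalization $\int(\phi_1^R)^2=1$:
$$1=\int_{B_R}\bigl(\phi_1^R(x)\bigr)^2\,dx\approx \frac{A^2}{V^2(R)R^{2d}}\int_{B_R}V^2(\delta_D(x))\,dx.$$
The last integral is $\approx V^2(R)R^d$: the upper bound is immediate from $\delta_D(x)\le R$, and the lower bound follows by restricting to $B_{R/2}$, where $\delta_D(x)\ge R/2$ and $V(R/2)\ge V(R)/2$ by subadditivity. Hence $A^2\approx R^d$, i.e.\ $A\approx R^{d/2}$, and therefore $\phi_1^R(x)\approx V(\delta_D(x))/[V(R)R^{d/2}]$ as claimed. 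The main (very modest) obstacle is the uniform comparison $p(V^2(R),x,y)\approx R^{-d}$ for $x,y\in B_R$; it is this collapse of the free kernel to a constant on the natural length scale that allows the two-sided bound of Corollary~\ref{hk_kula2} to factor into a pure product in $V(\delta_D(x))$ and $V(\delta_D(z))$, after which the rest of the argument is a one-line $L^2$-normalization.
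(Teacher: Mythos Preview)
Your proof is correct. Both arguments rest on Corollary~\ref{hk_kula2}, but they extract the eigenfunction in different ways. The paper works on the diagonal: it reads off $p_D(t,x,x)\approx e^{-\lambda_1(R)t}\big(V(\delta_D(x))/V(R)\big)^2 R^{-d}$ for large $t$, then invokes intrinsic ultracontractivity (citing \cite{MR2445505} and \cite{MR990239}) to obtain $\lim_{t\to\infty} e^{\lambda_1(R)t}p_D(t,x,x)=(\phi_1^R(x))^2$, and matches the two. Your route is more self-contained: you use the exact eigenfunction identity at the single finite time $t_*=V^2(R)$, collapse the free kernel to $R^{-d}$ on the ball, and fix the remaining scalar by $L^2$ normalization. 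This avoids the external appeal to intrinsic ultracontractivity entirely; the paper's approach, on the other hand, needs only the diagonal estimate (so the verification that $p(V^2(R),x,y)\approx R^{-d}$ off-diagonal is not required there).
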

\begin{proof}
By Corollary  \ref{hk_kula2}, for $t\ge V(R)$,  we obtain
$$ p_{D}(t, x, x)
    \approx  e^{-\lambda_1(R) t}\(\frac{V(\delta_D(x))}{ V(R)}\)^2 p_{V^2(R)}(0).$$
By Lemma \ref{sup_p_t},  $$ p_{V^2(R)}(0)\approx  R^{-d}.$$
Since $\nu$ is radial and infinite, by \cite[Theorem 3.1]{MR2445505}, the semigroup $P^D_t$ is intrinsically ultracontractive. Hence, by \cite[Theorem 4.2.5]{MR990239}
$$\lim_{t\to\infty}\frac{p_D(t,x,x)}{e^{-\lambda_1(R)t}(\phi^R_1(x))^2}=1, \quad x\in D,$$
which gives the claimed result.

\end{proof}

\section{Unbounded sets}\label{sec:ud}

Throughout this section  we assume that global WLSC and WUSC hold for $\psi$. Due to  \cite[Proposition 5.2, Lemma~7.2 and Lemma~7.3]{BGR2} and \cite[Proposition~3.5]{2012GR}
the quantities   $\mathcal{J}(r)$,  $\mathcal{I}(r)$ and $H_r$ employed above
now depend only on the dimension and scaling characteristics of the L\'evy-Kchintchine exponent $\psi$.
Denote $L(r)=\nu(B_r^c)$, $r>0$, the tail of the L\'evy measure.
Our first unbounded set is the complement of a ball.
\begin{prop}\label{HittingDensity} Let $\psi\in
\WLSC{\la}{0}{\lC}
\cap \WUSC{\ua}{0}{\uC}$.
There
is $\Cg=\Cg (d,\psi)$, such that  for all $R>0$ and  $t\geq V^2(R)$,
$$\p^x(\tau_{\overline{B}^c_R}\in dt)/dt\leq \Cg p_{t/2}(0)\frac{R^d}{V^2(R)},\quad {x\in\Rd}.$$
\end{prop}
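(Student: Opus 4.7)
The plan is to start from the Ikeda--Watanabe formula, which represents the exit-time density of $D=\overline{B}^c_R$ as
$$\phi(t,x):=\p^x(\tau_D\in dt)/dt=\int_D p_D(t,x,u)\,\nu(\overline{B}_R-u)\,du,$$
and to split time at $t/2$ via Chapman--Kolmogorov, obtaining
$$\phi(t,x)=\int_D p_D(t/2,x,w)\,\phi(t/2,w)\,dw.$$
Since radial monotonicity of the free density gives $p_D(t/2,x,w)\le p_{t/2}(x-w)\le p_{t/2}(0)$, it suffices to prove that
$$\int_D\phi(t/2,w)\,dw\le C\,\frac{R^d}{V^2(R)}\quad\text{for }t\ge V^2(R),$$
with $C=C(d,\psi)$.

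To control this integral I would interchange the order of integration and use the symmetry of $p_D$ to rewrite
$$\int_D\phi(s,w)\,dw=\int_D\nu(\overline{B}_R-u)\,\p^u(\tau_D>s)\,du,$$
and then invoke the bound $\p^u(\tau_D>s)\le C(V(\delta_D(u))/V(R)\wedge 1)$ valid for $s\ge V^2(R)/2$. The latter follows from Corollary~\ref{heatKernelCompl1} applied with $\sqrt{s}\wedge V(R)\asymp V(R)$, together with the fact that under global scalings $H_R/\mathcal{J}(R)^2$ is bounded by a constant depending only on $d$ and $\psi$ (as used in the proof of Theorem~\ref{heatKernelComplGlobal}).

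The remaining estimate
$$I:=\int_D\nu(\overline{B}_R-u)\Big(\frac{V(\delta_D(u))}{V(R)}\wedge 1\Big)\,du\le C\,\frac{R^d}{V^2(R)}$$
I would obtain by splitting at $|u|=2R$. For $R<|u|\le 2R$ the crude bound $\nu(\overline{B}_R-u)\le L(\delta_D(u))$, combined with $L(s)\le C/V^2(s)$ (from Lemma~\ref{upper_den} and global WLSC) and the WUSC estimate $V(s)\ge c(s/R)^{\ua/2}V(R)$ for $s\le R$, reduces the inner piece to $\int_0^R ds/V(s)\le CR/V(R)$, which gives contribution $\le CR^d/V^2(R)$. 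For $|u|>2R$ the set $\overline{B}_R-u$ lies in the annulus $\{|u|/2\le|z|\le 3|u|/2\}$, so radial monotonicity of $\nu$ and Lemma~\ref{upper_den} yield $\nu(\overline{B}_R-u)\le CR^d/[|u|^dV^2(|u|)]$; the WLSC estimate $V(r)\ge c(r/R)^{\la/2}V(R)$ for $r\ge R$ then gives $\int_{2R}^\infty dr/[rV^2(r)]\le C/V^2(R)$ and again the contribution $CR^d/V^2(R)$.

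The main obstacle is the careful two-scale control of $\nu(\overline{B}_R-u)$. Bounding it by $L(\delta_D(u))$ alone produces a divergent integral at infinity, while bounding it by $|B_R|\,\nu(\delta_D(u))$ alone produces a divergent integral near $\partial B_R$. The dichotomy $|u|\lessgtr 2R$, with the sharper annulus estimate employed only on the far side, is what makes both pieces close at the required rate.
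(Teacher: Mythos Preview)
Your proof is correct and follows essentially the same route as the paper: both start from the Ikeda--Watanabe formula, extract the factor $p_{t/2}(0)$ via Chapman--Kolmogorov (the paper packages this as Lemma~\ref{UB}, you spell it out through the Fubini identity $\int_D\phi(s,w)\,dw=\int_D\nu(\overline{B}_R-u)\,\p^u(\tau_D>s)\,du$), use the survival-probability bound $\p^u(\tau_D>s)\le C\big(V(\delta_D(u))/V(R)\wedge 1\big)$, and split the remaining integral at $|u|=2R$ with $L(\delta_D(u))$ on the inner shell and $|B_R|\,\nu(|u|-R)$ on the outer part. The only cosmetic difference is that the paper cites \cite[Proof of Proposition~3.4]{BGR2} for $\int_0^R V(\rho)L(\rho)\,d\rho\le cR/V(R)$, whereas you derive the same bound from $L\le C/V^2$ (which is immediate from $L\le h$ and Lemma~\ref{ch1V}) and WUSC.
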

\begin{proof}
By
(\ref{Ikeda-Watanabe1}) and symmetry of $\nu$, the Radon-Nikodym derivative satisfies
$$\p^x(\tau_{\overline{B}^c_R}\in dt)/dt=\int_{\overline{B}^c_R} p_{\overline{B}^c_R}(t,x,y)\nu\(B(y,R)\)dy.$$
Since $\nu$ is radially decreasing, for $|y|>R$,
 $$\nu\(B(y,R)\)\leq \min\{L(|y|-R),\frac{\omega_d}{d}\nu(|y|-R)){R^d}\}.$$
By Lemma \ref{UB} and   \cite[Theorem 6.3]{BGR2}, there exists $c_1=c_1(d,\psi)$ such that, for  $t\geq V^2(R)$,
$$p_{\overline{B}^c_R}(t,x,y)\leq c_1 p_{t/2}(0)\(1\wedge \frac{V(|y|-R)}{V(R)}\), \qquad x,\, y\in \Rd.$$
Therefore,
\begin{eqnarray*}&&\p^x(\tau_{\overline{B}^c_R}\in dt)/dt
\leq c_1 p_{t/2}(0)\int_{\overline{B}^c_R} \(1\wedge\frac{V(|y|-R)}{V(R)}\)\nu\(B(y,R)\)dy\\
&&\leq c_1 p_{t/2}(0)\(\int_{R< |y|< 2R}\frac{V(|y|-R)}{V(R)}L(|y|-R)dy+\frac{\omega_d}{d}R^d\int_{|y|\geq2R}\nu(|y|-R))dy\right)\\
&&\leq\omega_d2^{d-1}c_1p_{t/2}(0)\(\frac{R^{d-1}}{V(R)}\int^R_0V(\rho)L(\rho)d\rho+R^dL(R)  \)\\
&& \leq c_2 p_{t/2}(0)\frac{R^d}{V^2(R)},
\end{eqnarray*}
where in the last line we used Lemma~\ref{ch1V} and   \cite[Proof of Proposition 3.4]{BGR2} to get $$\int^R_0V(\rho)L(\rho)d\rho\le c \frac { R} {V(R)}.$$
In fact, $c_2=c_1c(d).$
\end{proof}

The assumption $d>\la$ in the next result secures the {\it transience} of the underlying unimodal L\'evy process $X$ \cite[Corollary~37.6]{MR1739520}. The proof below asserts in relative terms that hitting the ball $\overline{B_R}$ is unlikely for $X$ when the points $x$ and $y$ are far away from the ball.

\begin{prop} \label{heatKernelOutside}
 Let $\psi \in \WLSC{\la}{0}{\lC}\cap \WUSC{\ua}{0}{\uC}$ and $d>\ua$.
There is $\Ch=\Ch (d,\psi)$ such that  if $R>0$,  $|x|,|y|\geq \Ch R$, and $t>0$, then
$$p_{\overline{B}^c_R}(t,x,y)\geq \frac{1}{2} p(t,x,y).$$
\end{prop}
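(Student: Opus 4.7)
The starting point is the strong-Markov identity
\[
p(t,x,y)-p_{\overline{B}^c_R}(t,x,y)=\E^x[p(t-\tau,X_\tau,y);\,\tau<t],\qquad \tau:=\tau_{\overline{B}^c_R},
\]
so one has to show that the right-hand side $U$ is at most $\tfrac12 p(t,x,y)$ when $\Ch$ depends only on $d$ and $\psi$. On $\{\tau<\infty\}$ one has $|X_\tau|\le R$ and, taking $\Ch\ge 2$, $|y-X_\tau|\approx|y|$. Radial monotonicity of $p_s$, subadditivity of $V$ and the two-sided bounds of Lemma~\ref{densityApprox} (applicable globally under global WLSC$\cap$WUSC) then yield $p(t-\tau,X_\tau,y)\le c_0\, p_{t-\tau}(y)$ with $c_0=c_0(d,\psi)$, so it suffices to control $\int_0^t p_{t-s}(y)\,\p^x(\tau\in ds)$.

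I would split this integral at $s=V^2(R)$. For the tail $s\ge V^2(R)$, Proposition~\ref{HittingDensity} gives $\p^x(\tau\in ds)\le \Cg p_{s/2}(0)R^d/V^2(R)\,ds$; combined with the sharp bound $p_{s/2}(0)\approx[V^{-1}(\sqrt s)]^{-d}$ from Lemma~\ref{sup_p_t} and the Green-function estimate $\int_0^\infty p_u(y)\,du\approx V^2(|y|)/|y|^d$---finite precisely when $d>\ua$ by global WUSC---one gets a contribution dominated by a constant multiple of $V^2(|y|)/(V^2(R)|y|^d)$, and (by symmetry $p(t,x,y)=p(t,y,x)$ and repeating from $y$) also by $V^2(|x|)/(V^2(R)|x|^d)$. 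For the head $s<V^2(R)$, on $\{\tau<s\}$ the process must have exited $B(x,|x|-R)$ by time $s$, so a translation of Lemma~\ref{kula1}\eqref{eq:2} together with $|x|-R\ge|x|/2$ and subadditivity of $V$ yields $\p^x(\tau\le s)\le Cs/V^2(|x|)$; paired with the sharp density estimate of Lemma~\ref{densityApprox} for $p_{t-s}(y)$, this contribution scales with $t$ in the same way as the lower bound of $p(t,x,y)$.

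The final step is to compare both pieces with the lower bound $p(t,x,y)\ge c\min\{[V^{-1}(\sqrt t)]^{-d},\,t/(|x-y|^d V^2(|x-y|))\}$ of Lemma~\ref{densityApprox}, case-analysing on the relative sizes of $t$, $V^2(R)$, $V^2(|x-y|)$ and $V^2(|x|\wedge|y|)$. Under global WUSC, $V^2(|y|)/V^2(R)\le C(|y|/R)^{\ua}$, which together with $|x|,|y|\ge\Ch R$ and transience $d>\ua$ brings every upper bound for $U/p(t,x,y)$ to the form $C(d,\psi)\,\Ch^{-(d-\ua)}$. Choosing $\Ch=\Ch(d,\psi)$ large enough makes this ratio strictly less than $1/2$, completing the proof. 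The main obstacle is uniformity in $t$: when $t$ or $|x-y|$ is small (so that $p(t,x,y)$ itself is tiny) the numerator and denominator shrink together, and the sharp short-time density $p_{t-s}(y)\le C(t-s)/(|y|^d V^2(|y|))$ must be paired with the Markov tail $\p^x(\tau\le s)\le Cs/V^2(|x|)$ (for the head) and the sharp form of $p_{s/2}(0)$ (for the tail) to preserve the $\Ch^{-(d-\ua)}$ decay.
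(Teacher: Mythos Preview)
Your overall strategy---bound $U:=\E^x[p(t-\tau,X_\tau,y);\tau<t]$ and show $U\le\tfrac12 p(t,x,y)$---is the same as the paper's, but the split you propose at $s=V^2(R)$ has a genuine gap. On the piece $s\ge V^2(R)$ you pull out $\sup_{s\ge V^2(R)}p_{s/2}(0)\approx R^{-d}$ and integrate $\int_0^\infty p_u(y)\,du\approx V^2(|y|)/|y|^d$, arriving at a bound $C\,V^2(|y|)/[V^2(R)|y|^d]$ that is \emph{independent of $t$}. Since $p(t,x,y)\to 0$ as $t\to\infty$, no choice of $\Ch=\Ch(d,\psi)$ can make this $\le\tfrac12 p(t,x,y)$ for all $t$. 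The case analysis you allude to cannot repair this, because the offending term simply does not carry any $t$-decay.

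The paper's proof resolves this by a different decomposition: first split on the \emph{size of $t$}. For $t\le 2V^2(R\vee|x-y|/4)$ it uses the crude bound $f\le\sup_{s\le t}p(s,y^*,y)\,\p^x(\tau<\infty)$ with the hitting-probability estimate $\p^x(\tau<\infty)\le c\,R^dV^2(|x|)/[V^2(R)|x|^d]$, and the key geometric fact $|y-y^*|\ge R\vee|x-y|/4$ makes $\sup_{s\le t}p(s,y^*,y)\le c\,p(t,x,y)$ in this regime. For $t\ge 2V^2(R\vee|x-y|/4)$ (so $p_{t/2}(0)\le c\,p(t,x,y)$) it splits $\tau$ at $t/2$, not at $V^2(R)$: on $\{\tau\ge t/2\}$ the hitting density is $\le \Cg\,p_{t/2}(0)R^d/V^2(R)$, which already carries the correct $t$-scaling, and the remaining $\int_{t/2}^t p(t-s,y^*,y)\,ds\le U(y-y^*)$; on $\{\tau<t/2\}$ one has $p(t-\tau,y^*,y)\le p_{t/2}(0)$ and again $\p^x(\tau<\infty)$ controls the rest. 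Both pieces are then of the form $c\,p(t,x,y)\cdot R^d V^2(\,\cdot\,)/[V^2(R)(\,\cdot\,)^d]$, and global WUSC converts this into $c\,p(t,x,y)\,(R/|x|)^{d-\ua}$, which can be made $<\tfrac12$. Your head estimate (for $s<V^2(R)$) is fine; it is the tail that needs the $t/2$ split to scale correctly.
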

\begin{proof}
Assume that $|y|\geq |x|\geq 2R$. Let $y^*$ be a projection of $y$ onto the boundary of $B_R$.
Let $f(t,x,y)=E^x[p(t-\tau_{\overline{B}^c_R},y^*,y)
,\tau_{\overline{B}^c_R}<t]$.
  Since $p_t(\cdot)$
is radially decreasing,
$$p_{\overline{B}^c_R}(t,x,y)=  p(t,x,y)- E^x[p(t-\tau_{\overline{B}^c_R},X_{\tau_{\overline{B}^c_R}},y),\tau_{\overline{B}^c_R}<t]\geq p(t,x,y)-f(t,x,y).$$
Clearly,
$$
f(t,x,y)\leq \sup_{s\leq t}p(s,y^*,y)\p^x(\tau_{\overline{B}^c_R}<\infty).
$$
Observe that $|y-y^*|\geq R\vee \frac{|x-y|}{4}$. Indeed
$$|x-y|\leq |x|+|y|\leq 2|y|\leq 4(|y|-R)=4|y-y^*|.$$
Due to Lemma~\ref{upper_den} and radial monotonicity of $p_t$,
\begin{equation*}\sup_{s\leq t}p(s,y^*,y)\leq c_1\frac{t}{V^2\(R\vee \frac{|x-y|}{4}\)\left|R\vee \frac{|x-y|}{4}\right|^d}.\end{equation*}
This, Lemma~\ref{densityApprox}, radial monotonicity of $p_t$ and \cite[Corollary 24]{2014-KB-TG-MR-jfa} give,
for $t\leq 2V^2\(R\vee \frac{|x-y|}{4}\)$,
$$\sup_{s\leq t}p(s,y^*,y)\le c_2 p_t\(R\vee \frac{|x-y|}{4}\)\leq c_3 p(t,x,y),$$
and so by
\cite[Proposition 5.8]{BGR2},
\begin{equation}\label{hKO1}
f(t,x,y)\leq c_4p(t,x,y)\frac{R^dV^2(|x|)}{V^2(R)|x|^d}.
\end{equation}
Let $t\geq 2V^2\(R\vee \frac{|x-y|}{4}\)$. By Proposition \ref{HittingDensity} and \cite[Theorem 3 and Section 4 for $d\leq 2$]{2013arXiv1301.2441G},
\begin{eqnarray*}
 \I&:=&E^x[p(t-\tau_{\overline{B}^c_R},y,y^*),t/2\leq \tau_{\overline{B}^c_R}<t]\leq \Cg p_{t/2}(0)\frac{R^d}{V^2(R)}\int^{t}_{t/2}p(t-s,y,y^*)ds\\
 &\leq& \Cg p_{t/2}(0)\frac{R^d}{V^2(R)}U(y-y^*)\leq c_{5} p_{t/2}(0)\frac{R^d}{V^2(R)}\frac{V^2(|y|-R)}{(|y|-R)^d},
\end{eqnarray*}
where $U(y)=\int_0^\infty p_t(y)dt\approx V^2(|y|)/|y|^d$ is the potential kernel of $X$ \cite{2013arXiv1301.2441G}.
By \cite[Proposition 5.8]{BGR2} and monotonicity of $s\mapsto p_s(0)$,
$$\II:=E^x[p(t-\tau_{\overline{B}^c_R},y^*,y),\tau_{\overline{B}^c_R}<t/2]\leq c_{6}p_{t/2}(0)\frac{R^dV^2(|x|)}{V^2(R)|x|^d}.$$
Since $t\geq V^2(|x-y|)/8$, by  Lemma~\ref{densityApprox},  $p_{t/2}(0)\le  c_7p(t,x,y)$. Therefore,
\begin{equation}\label{hKO2}
f(t,x,y)=\I+\II\leq c_8 p(t,x,y) \frac{R^d}{V^2(R)} \(\frac{V^2(|x|)}{|x|^d}+\frac{V^2(|y|-R)}{(|y|-R)^d}\).
\end{equation}
 Finally, combining (\ref{hKO1}) with (\ref{hKO2}), for all $t>0$ we have
\begin{eqnarray*}
f(t,x,y)&\leq&
(c_8+c_4)p(t,x,y)\frac{R^d}{V^2(R)} \(\frac{V^2(|x|)}{|x|^d}+\frac{V^2(|y|-R)}{(|y|-R)^d}\).
\end{eqnarray*}
By global WUSC, for all $t>0$ and $|x|,|y|\geq 2R$,
$$f(t,x,y)\leq c_9 p(t,x,y) \( \( \frac{R}{|x|}\)^{d-\ua}+\(\frac{R}{|y|-R}\)^{d-\ua}\).$$
Therefore there exists a constant $c_{10}$ such that  for $|x|,|y|\geq c_{10}R$ we have
$$p_{\overline{B}^c_R}(t,x,y)\geq p(t,x,y)-f(t,x,y) \geq\frac{1}{2} p(t,x,y).$$
\end{proof}

\begin{lem}\label{freeChange}Let $d\geq 1$ and $\psi$ satisfy global {\rm WLSC} and {\rm WUSC}.
There is a constant $C=C(d, \psi)$ such that  if $\lambda>1$ and $|x-z|\leq \lambda R$, then
 $$C^{-1}\lambda^{-2-d}\leq \frac{p_{V^2(R)}(x)}{p_{V^2(R)}(z)}\leq C\lambda^{2+d}.$$
\end{lem}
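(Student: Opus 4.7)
Write $t = V^2(R)$. The natural first step is to invoke the sharp two-sided estimate of $p_t$ available under the standing global WLSC and WUSC assumptions of the section. Combining Lemma~\ref{densityApprox} (with $\theta = 0$) with Lemma~\ref{sup_p_t} at the origin, and using $V^{-1}(\sqrt{V^2(R)}) = R$, one obtains
$$p_{V^2(R)}(w) \;\approx\; \frac{1}{R^d} \;\wedge\; \frac{V^2(R)}{V^2(|w|)\,|w|^d}, \qquad w \in \R^d,$$
with comparability constants depending only on $d$ and $\psi$. By \eqref{eq:rp}, this reduces to $p_{V^2(R)}(w) \approx R^{-d}$ for $|w| \leq R$ and $p_{V^2(R)}(w) \approx V^2(R)/[V^2(|w|)|w|^d]$ for $|w| > R$.

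\textbf{Reduction by radial monotonicity.} Since $X$ is isotropic unimodal, $p_t$ is a radially nonincreasing function on $\R^d$. Hence one may assume without loss of generality that $|x| \leq |z|$; then $p_t(x) \geq p_t(z)$, and the lower bound $C^{-1}\lambda^{-2-d} \leq p_t(x)/p_t(z)$ holds trivially for any $C \geq 1$. The real content is therefore the upper bound $p_t(x)/p_t(z) \leq C\lambda^{2+d}$. For this, I would split into three cases based on where $|x|$ and $|z|$ sit relative to $R$.

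\textbf{Case analysis.} Case (a): $|x| \leq |z| \leq R$; both densities are of order $R^{-d}$, so the ratio is bounded by a constant. Case (b): $|x| \leq R < |z|$; the triangle inequality and $\lambda > 1$ give $|z| \leq |x| + \lambda R \leq (1+\lambda)R \leq 2\lambda R$, hence
$$\frac{p_t(x)}{p_t(z)} \;\leq\; c\,\frac{V^2(|z|)\,|z|^d}{V^2(R)\,R^d}.$$
Case (c): $R < |x| \leq |z|$; here $|z| \leq |x| + \lambda R < (1+\lambda)|x| \leq 2\lambda|x|$, giving
$$\frac{p_t(x)}{p_t(z)} \;\leq\; c\,\frac{V^2(|z|)\,|z|^d}{V^2(|x|)\,|x|^d}.$$
In both cases the ratio $|z|/|x|$ (respectively $|z|/R$) is at most $2\lambda$, so the task is reduced to controlling $V(2\lambda r)/V(r)$.

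\textbf{Closing step and main subtlety.} To finish I would translate global WUSC for $\psi$ into the scaling $V(\mu r) \leq C \mu^{\,\ua/2} V(r)$ for all $\mu \geq 1$ and $r > 0$; this follows from Lemma~\ref{ch1V} (the relation $V(r)^{-2} \approx \psi(1/r)$) exactly as in the derivation leading to \eqref{eq:soV}, but with $\lt = 0$. Plugging $\mu = 2\lambda$ yields $V^2(|z|)|z|^d / [V^2(|x|)|x|^d] \leq C'(2\lambda)^{\,\ua + d}$ in both cases (b) and (c), and since $\ua < 2$ this is bounded by $C''\lambda^{\,d+2}$. The only step that calls for care is keeping track of the dependence of constants on $\psi$ only through the scaling characteristics; no deeper obstacle is expected.
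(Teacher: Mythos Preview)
Your proof is correct and follows the same core idea as the paper: both start from the two-sided estimate
\[
p_{V^2(R)}(w)\approx R^{-d}\wedge \frac{V^2(R)}{V^2(|w|)|w|^d},
\]
and then bound the ratio by controlling $|z|$ against $|x|$ or $R$. The executions differ in two minor respects worth noting. First, the paper splits into only two cases according to whether $|z|\geq 2\lambda R$ or $|z|<2\lambda R$: in the former case $|z|\leq 2|x|$ and one invokes the doubling property $p_t(z/2)\leq c\,p_t(z)$ from \cite[Corollary 24]{2014-KB-TG-MR-jfa}; in the latter one simply bounds $p_t(x)\leq p_t(0)$ and $p_t(z)\geq p_t(2\lambda R)$. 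Second, for the final step the paper does not translate WUSC into a scaling of $V$; it uses only the subadditivity \eqref{subad}, which already gives $V(2\lambda R)\leq c\lambda V(R)$ and hence directly yields the exponent $2+d$. Your route through WUSC is valid and even produces the sharper exponent $\ua+d$ before you relax it to $2+d$, but subadditivity is the more elementary tool here.
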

\begin{proof}Assume that $|x-z|\leq \lambda R$.
By symmetry it is enough to prove the upper bound.
By scaling and
Lemma~\ref{densityApprox}
we have, for  $x\in\R^d$,
\begin{equation}\label{eq:fC1}p_{V^2(R)}(x)\approx \min\left\{  R^{-d},\frac{ V^2(R))}{V^2(|x|)|x|^d}\right\}.\end{equation}
If $|z|\geq 2 \lambda R$, then $|z|\leq 2|x|$. Hence, by radial monotonicity and \cite[Corollary 24]{2014-KB-TG-MR-jfa},
$$p_t(x)\leq p_t(z/2)\leq c_1 p_t(z), \qquad
t>0.$$
For $|z|<2\lambda R$, again by radial monotonicity, $$p_t(z)\geq p_t(2\lambda R).$$
This, subadditivity of $V$, and (\ref{eq:fC1}) complete the proof: $$  \frac{p_{V^2(R)}(x)}{p_{V^2(R)}(z)}\leq \frac{p_{V^2(R)}(0)}{p_{V^2(R)}(2\lambda R)}\leq c_2\lambda^{2+d}.$$
\end{proof}

The next theorem may be considered as the main result of this section.
\begin{thm}\label{exterior_lower}  Let $\psi \in \WLSC{\la}{0}{\lC}\cap \WUSC{\ua}{0}{\uC}$ and $d>\ua$. {Let $D$ be  a $C^{1,1}$ at scale $R_1$ and $D^c\subset \overline{B_{R_2}}$.
Constants $c_*=c_*(d,\psi), c^*=c^*(d,\psi)$ exist such that for all $x,y\in\Rd$, $t>0$,
$$
    c_*\(\frac{R_1}{R_2}\)^{4+2d}  \(\frac{V(\delta_D(x))}{\sqrt{t}\wedge V(R_1)}\wedge 1\)\(\frac{V(\delta_D(y))}{\sqrt{t}\wedge V(R_1)}\wedge 1\)p(t, x, y) \le p_{D}(t, x, y) $$}
    and
   $$  p_{D}(t, x, y)\le
    c^*  \(\frac{V(\delta_D(x))}{\sqrt{t}\wedge V(R_1)}\wedge 1\)\(\frac{V(\delta_D(y))}{\sqrt{t}\wedge V(R_1)}\wedge 1\)p(t, x,y). $$
\end{thm}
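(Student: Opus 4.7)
The upper bound is essentially immediate: since $D$ is $C^{1,1}$ at scale $R_1$, it satisfies the outer ball condition at that scale, and Theorem~\ref{heatKernelComplGlobal} applies to $D$ directly, yielding the claimed upper bound with a constant depending only on $d$ and $\psi$. Note that $R_1$ and $R_2$ do not even enter at this stage.

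For the lower bound, my plan is to split into a small-time regime $0<t\le c_\ast V^2(R_1)$ and a large-time regime $t> c_\ast V^2(R_1)$, where $c_\ast=c_\ast(d,\psi)$ is the constant from Corollary~\ref{lower_scaling}. In the small-time regime the inner ball condition at scale $R_1$ combined with Corollary~\ref{lower_scaling} gives the bound directly with no $R_1/R_2$ loss (since $\sqrt{t}\wedge V(R_1)=\sqrt{t}$ there). In the large-time regime $\sqrt{t}\wedge V(R_1)=V(R_1)$, so the target factors become $\phi(x):=(V(\delta_D(x))/V(R_1))\wedge 1$ and $\phi(y)$ analogously. I then split further according to the location of $x,y$ relative to $\Ch R_2$, with $\Ch$ from Proposition~\ref{heatKernelOutside}. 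If both $|x|,|y|\ge\Ch R_2$ then $\delta_D(\cdot)\ge(\Ch-1)R_2\ge R_1$ so $\phi(x)=\phi(y)=1$, and domain monotonicity $D\supseteq\overline{B_{R_2}}^c$ together with Proposition~\ref{heatKernelOutside} applied with $R=R_2$ gives $p_D(t,x,y)\ge\tfrac12 p(t,x,y)$ immediately.

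In the essential case where at least one of $x,y$ lies inside $B_{\Ch R_2}$, I would fix $t_0:=c_\ast V^2(R_1)/2$ and carry out a Chapman--Kolmogorov decomposition landing in a safe region. Concretely, pick one or two reference points $z_\ast$ with $|z_\ast|=(\Ch+2)R_2$ (chosen so that the relative positions mirror those of $x,y$) and set $E_\ast:=B(z_\ast,R_1/12)$; then $\delta_D(u)\ge R_1$ and $|u|\ge \Ch R_2$ for $u\in E_\ast$. Using a two-step (one endpoint close) or three-step (both close) decomposition
\[
p_D(t,x,y)\;\ge\;\int_{E_1}\!\int_{E_2}p_D(t_0,x,u)\,p_D(t-2t_0,u,v)\,p_D(t_0,v,y)\,du\,dv,
\]
Corollary~\ref{lower_scaling} handles the short end pieces (supplying the $\phi$-factors at $x,y$, those at $u,v\in E_\ast$ being trivially $1$), and Proposition~\ref{heatKernelOutside} with $R=R_2$ handles the long middle piece, returning $\tfrac12\, p(t-2t_0,u,v)$. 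Lemma~\ref{freeChange} then replaces the free kernels at the integration variables by their values at the reference points, Lemma~\ref{densityApprox} gives $p(t_0,x,z_1)\asymp V^2(R_1)/(V^2(R_2)R_2^d)$ because $|x-z_1|\asymp R_2$ and $t_0\asymp V^2(R_1)$, and each volume $|E_\ast|\asymp R_1^d$ enters. Combining,
\[
p_D(t,x,y)\;\ge\;c\,\phi(x)\phi(y)\left(\frac{R_1^d\,V^2(R_1)}{R_2^d\,V^2(R_2)}\right)^{\!k}\!p(t,x,y),
\]
with $k\in\{1,2\}$ counting the short end pieces. Subadditivity of $V$ gives $V(R_1)/V(R_2)\ge R_1/(2R_2)$, hence the parenthetic factor is $\ge(R_1/R_2)^{k(d+2)}\ge(R_1/R_2)^{2d+4}$, matching the claim.

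The main obstacle is the free-kernel comparison $p(t-2t_0,z_1,z_2)\asymp p(t,x,y)$ (or its two-step analogue): Lemma~\ref{freeChange} yields absolute constants whenever the displacement is small relative to the diffusive scale $V^{-1}(\sqrt t)$, but in the delicate subregime $V^{-1}(\sqrt t)\asymp R_2$ one must apply Lemma~\ref{freeChange} with $\lambda\asymp R_2/V^{-1}(\sqrt t)$ and absorb its polynomial loss into the overall $(R_1/R_2)^{2d+4}$ factor. Choosing $z_1,z_2$ so that $|z_1-z_2|\asymp|x-y|\vee R_2$ and tracking the resulting $\lambda$'s is the only real bookkeeping required.
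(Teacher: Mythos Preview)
Your upper bound and the small-time lower bound are identical to the paper's. The case $|x|,|y|\ge \Ch R_2$ is also handled the same way. The essential difference is in the large-time regime when one or both points lie in $B_{\Ch R_2}$.

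The paper's device is cleaner than yours and sidesteps precisely the obstacle you flag. Rather than sending $x,y$ to reference points on a fixed sphere, the paper picks a \emph{single} unit vector $v$ with $\langle x,v\rangle\ge 0$ and $\langle y,v\rangle\ge 0$ and sets $x_0=x+\Ch R_2\,v$, $y_0=y+\Ch R_2\,v$. Then $x_0-y_0=x-y$, so $p(t,x_0,y_0)=p(t,x,y)$ \emph{exactly}, and the free-kernel comparison you identify as the main obstacle simply disappears. The paper then proves the pointwise Dirichlet-kernel comparison
\[
p_D(t_0/2,x,z)\;\ge\; c\,(R_1/R_2)^{2+d}\Big(\tfrac{V(\delta_D(x))}{V(R_1)}\wedge 1\Big)\,p_D(t_0/2,x_0,z)\qquad(z\in D),
\]
by sandwiching: Corollary~\ref{lower_scaling} below on the left, Theorem~\ref{heatKernelComplGlobal} above on the right, and Lemma~\ref{freeChange} to pass between $p(t_0/2,x,z)$ and $p(t_0/2,x_0,z)$ (displacement $\Ch R_2$, scale $R_1$, hence $\lambda\asymp R_2/R_1$). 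Doing this at both endpoints and using Chapman--Kolmogorov reconstitutes $p_D(t,x_0,y_0)$, which Proposition~\ref{heatKernelOutside} bounds by $\tfrac12\,p(t,x_0,y_0)=\tfrac12\,p(t,x,y)$. The two applications of Lemma~\ref{freeChange} account exactly for the exponent $2(2+d)=4+2d$.

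Your route, by contrast, lands on $p(t-2t_0,z_1,z_2)$ rather than on $p_D(t,x_0,y_0)$, and this creates two real gaps. First, with $t_0=c_\ast V^2(R_1)/2$ and the large-time threshold $t>2t_0$, the middle time $t-2t_0$ can be arbitrarily small; for $t\downarrow 2t_0$ the middle kernel collapses while $p(t,x,y)$ does not. Second, even after fixing that, you must force $|z_1-z_2|$ (or $|z_1-y|$ in the two-step case) to track $|x-y|$; your suggested choice $|z_1-z_2|\asymp |x-y|\vee R_2$ is off when $|x-y|\ll R_2$, and invoking Lemma~\ref{freeChange} with $\lambda\asymp R_2/V^{-1}(\sqrt t)$ to absorb the mismatch adds, in the worst case $t\asymp V^2(R_1)$, an \emph{extra} factor $(R_1/R_2)^{2+d}$ on top of the $(R_1/R_2)^{2(2+d)}$ already coming from your end pieces, so you would obtain $(R_1/R_2)^{3(2+d)}$ rather than the stated $(R_1/R_2)^{4+2d}$. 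The paper's common-shift trick avoids all of this at once.
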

\begin{proof} We only deal with the lower bound since the upper bound follows from Theorem \ref{heatKernelComplGlobal}. Assume that $|x|\leq |y|$ and denote $l(x,y)=\left(\frac{V(\delta_D(x))}{V(R_1)}\wedge 1\right)\left(\frac{V(\delta_D(y))}{V(R_1)}\wedge 1\right)$.
By Corollary \ref{lower_scaling} it is enough to consider $t>t_0=c^*V^2(R_1)$, where $c^*$ is the constant from that corollary. By Proposition \ref{heatKernelOutside},  for $|x|,|y|\geq \Ch R_2$,  we have
 \begin{equation}\label{LEB1}p_D(t,x,y)\geq p_{\overline{B}^c_{R_2}}(t,x,y)\geq\frac{1}{2}p(t,x,y).\end{equation}
In the remaining part of the proof we closely follow the ideas of \cite[Theorem 1.3]{MR2776619},
where a similar result is proved for the  isotropic stable L\'evy processes.
Let $|x|<\Ch R_2$. Fix $v\in \Rd$ with $|v|=1$, such that $\left<x,v\right>\geq 0$ and $\left<y,v\right>\geq 0$ if $d\geq 2$, and $v=2{y}/{|y|}$ if $d=1$. Define
$x_0=x+\Ch R_2 v$ and $y_0=y+\Ch R_2 v$. Then $|x_0|,\, |y_0|\geq \Ch R_2$.
{By  Lemma \ref{freeChange} and \cite[Corollary 24]{2014-KB-TG-MR-jfa}, there exists $c_1=c_1(d,\psi)$ such that, for all $z\in\Rd$,
$$p(t_0/2,x,z)\geq c_1(R_1/R_2)^{2+d}p(t_0/2,x_0,z).$$
Hence, by Corollary \ref{lower_scaling} and  Theorem  \ref{heatKernelComplGlobal}, there exists $c_2=c_2(d,\psi)$,  such that  for $z,x\in D$,
$$ p_D\left(t_0/2,x,z\right)\geq c_2 (R_1/R_2)^{2+d}\left(\frac{V(\delta_D(x))}{V(R_1)}\wedge 1\right)p_D\left(t_0/2,x_0,z\right).$$}
This  and the semigroup property imply
\begin{eqnarray*}
p_D(t,x,y)&=&\int \int p_D\left(t_0/2,x,z\right)p_D\left(t-t_0,z,w\right)p_D\left(t_0/2,w,y\right)dzdw\\
&\geq& c_2^2{ \(\frac{R_1}{R_2}\)^{4+2d}}l(x,y) \int \int p_D\left(t_0/2,x_0,z\right)p_D\left(t-t_0,z,w\right)p_D\left(t_0/2,w,y_0\right)dzdw\\
&=&c_2^2 { \(\frac{R_1}{R_2}\)^{4+2d}} l(x,y)p_D(t, x_0,y_0)\geq\frac{c_2^2}{2} { \(\frac{R_1}{R_2}\)^{4+2d}}l(x,y)p(t, x_0,y_0),
\end{eqnarray*}
where in the last step we used (\ref{LEB1}).
Since $p(t, x_0,y_0)=p(t,x,y)$, we obtain the conclusion.
\end{proof}
{We recall that the assumption $d>\la$ above yields the transience of the process $X$. We note that the results for recurrent unimodal L\'evy processes in dimension $1$ should be quite different:
for exterior domains in the case of  recurrent the isotropic stable L\'evy processes we refer to \cite{MR2722789}.

The following proposition may be proved in a similar way as \cite [Theorem 6.3] {BGR2}, where the result was shown for a complement of a closed ball. We leave the details to the reader.
\begin{prop}\label{Exit_time100} Let $\psi \in \WLSC{\la}{0}{\lC}\cap \WUSC{\ua}{0}{\uC}$ and $d>\ua$. Let $D$ be  a $C^{1,1}$ at scale $R_1$ and $D^c\subset \overline{B_{R_2}}$.
Then there are  constants $c_*=c_*(d,\psi), c^*=c^*(d,\psi)$ such that for all $x,y\in\Rd$ and $t>0$,
		$$c_* \(\frac{R_1}{R_2}\)^2\left(\frac{V(\delta_D(x))}{\sqrt{t}\wedge V(R_1)}\wedge1\right)\le  \p^x(\tau_{D}>t)\le c^* \left(\frac{V(\delta_D(x))}{\sqrt{t}\wedge V(R_1)}\wedge1\right).
 $$
\end{prop}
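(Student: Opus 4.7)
I would separate the upper and lower bounds. The upper bound is an immediate consequence of Theorem \ref{heatKernelComplGlobal}: since $D$ is $C^{1,1}$ at scale $R_1$ it satisfies the outer ball condition at that scale, so Theorem \ref{heatKernelComplGlobal} gives
$$p_D(t,x,y)\le C\left(\frac{V(\delta_D(x))}{\sqrt t\wedge V(R_1)}\wedge 1\right)\left(\frac{V(\delta_D(y))}{\sqrt t\wedge V(R_1)}\wedge 1\right)p(t,x,y),\qquad C=C(d,\psi).$$
Integrating in $y$, bounding the $y$-factor by $1$, and using $\int p(t,x,y)\,dy\le 1$ yields the claimed upper bound on $\p^x(\tau_D>t)$.

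For the lower bound I set $t_0:=c_0 V^2(R_1)$ with $c_0=c_0(d,\psi)$ small. When $0<t\le t_0$, the $C^{1,1}$ inner ball condition at scale $R_1$ and Lemma \ref{lem:spgenC11} applied with $r=R_1$ (noting that $\nu(R_1)>0$ is automatic from global scalings and Lemma \ref{Gr_Ex}) give $\p^x(\tau_D>t)\ge c(V(\delta_D(x))/\sqrt t\wedge 1)$ with $c=c(d,\psi)$; since $\sqrt t\wedge V(R_1)=\sqrt t$ and $(R_1/R_2)^2\le 1$ (one may assume $R_1\le R_2$, forced by the outer ball condition when $D^c\neq\emptyset$), the claim follows in this regime. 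For $t>t_0$ I apply the strong Markov property at $t_0$ to obtain
$$\p^x(\tau_D>t)\ge\int_A p_D(t_0,x,z)\,\p^z(\tau_D>t-t_0)\,dz,$$
where $A=B(z^*,R_2)$ is a ``safe'' ball with $|z^*|=3R_2$, so that $A\cap\overline{B_{R_2}}=\emptyset$. For $z$ in the sub-ball $B(z^*,R_2/2)$, Proposition \ref{heatKernelOutside}, $D\supset\overline{B_{R_2}}^c$, and the transience $d>\ua$ combine to give $\p^z(\tau_D>s)\ge c>0$ uniformly in $s>0$, with $c=c(d,\psi)$. For $p_D(t_0,x,z)$ I use Lemma \ref{lemppu100} with $D_1$ an inner ball of radius $\approx R_1$ near the boundary point closest to $x$ (or $D_1=B(x,R_1)$ if $\delta_D(x)\ge R_1$), $D_3=A$, $D_2=D\setminus(D_1\cup D_3)$: survival in $D_1$ (Corollary \ref{int2}) supplies the factor $V(\delta_D(x))/V(R_1)\wedge 1$, survival in $A$ for time $t_0\le V^2(R_2)$ (Lemma \ref{L5a1} with radius $R_2$) supplies a uniform constant, and the jump rate from $D_1$ to $A$ is bounded below by $cV^{-2}(R_2)R_2^{-d}$ via \eqref{Levy-V1}. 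Integrating over $z\in B(z^*,R_2/2)$ (volume $\approx R_2^d$) gives
$$\int_A p_D(t_0,x,z)\,dz\ge c\,\frac{V^2(R_1)}{V^2(R_2)}\left(\frac{V(\delta_D(x))}{V(R_1)}\wedge 1\right),$$
and global WUSC of $\psi$ (with $\ua<2$ and $R_1/R_2\le 1$) gives $V^2(R_1)/V^2(R_2)\ge c(R_1/R_2)^{\ua}\ge c(R_1/R_2)^2$, producing the required $(R_1/R_2)^2$ factor.

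The main obstacle is obtaining precisely the exponent $2$ in $(R_1/R_2)^2$. The key point is that the ``safe ball'' $A$ must have radius $\approx R_2$ (not $R_1$), so that its volume $R_2^d$ cancels the $R_2^{-d}$ in the lower bound on the jump rate, leaving only $V^2(R_1)/V^2(R_2)$, which global WUSC converts into $(R_1/R_2)^2$. This bookkeeping is the content of [BGR2, Theorem 6.3] in the special case $D=\overline{B_{R_2}}^c$, and the argument transfers to the present $C^{1,1}$ setting because only the inner/outer ball conditions and the constraint $D^c\subset\overline{B_{R_2}}$ enter.
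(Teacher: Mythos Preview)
Your approach is essentially the one the paper intends: the paper gives no proof and refers the reader to \cite[Theorem 6.3]{BGR2} (the case $D=\overline{B_R}^c$), and your sketch is precisely an adaptation of that argument to the exterior $C^{1,1}$ setting. The upper bound via Theorem~\ref{heatKernelComplGlobal} and integration in $y$ is correct, as is the small-time lower bound via Lemma~\ref{lem:spgenC11}. The large-time strategy---push mass at time $t_0\approx V^2(R_1)$ to a safe ball of radius $\approx R_2$ and then invoke transience ($d>\ua$) to survive forever with positive probability---is exactly right, and your bookkeeping for the factor $(R_1/R_2)^2$ is correct (in fact subadditivity of $V$ already gives $V^2(R_1)/V^2(R_2)\ge \tfrac14(R_1/R_2)^2$, WUSC is not needed here).

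There is one genuine gap in the sketch. Your lower bound on $\int_A p_D(t_0,x,z)\,dz$ via Lemma~\ref{lemppu100} takes $D_1$ near $x$ and $D_3=A=B(z^*,R_2)$ with $|z^*|\approx R_2$, and you bound $\inf_{D_1,D_3}\nu$ from below by $c\,V^{-2}(R_2)R_2^{-d}$. This only works when $\operatorname{dist}(D_1,D_3)\le cR_2$, i.e.\ when $|x|\le cR_2$. If $x$ is far from the origin (which the hypotheses allow), $D_1$ and $D_3$ are far apart, $\nu$ is arbitrarily small, and the bound collapses. The fix is immediate: split off the case $\delta_D(x)\ge R_2$ (equivalently $|x|\ge 2R_2$) and use transience directly, e.g.\ \cite[Proposition~5.8]{BGR2}, to get $\p^x(\tau_D>t)\ge \p^x(\tau_{\overline{B_{R_2}}^c}=\infty)\ge c(d,\psi)$, which already dominates $(R_1/R_2)^2$. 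The Lemma~\ref{lemppu100} argument is then only needed for $\delta_D(x)<R_2$, where $|x|\le 2R_2$ and your jump-rate bound is valid. Two minor points: for the survival-in-$D_1$ factor you want Lemma~\ref{L5a1}, not Corollary~\ref{int2}; and for $\p^z(\tau_D>s)\ge c$ with $z$ in the safe ball, it is cleaner to invoke $\p^z(\tau_{\overline{B_{R_2}}^c}=\infty)\ge c$ from transience than to go through Proposition~\ref{heatKernelOutside}.
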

One can also prove sharp estimates of $\p^x(\tau_{D}>t)$ above by integrating the estimates in Theorem~\ref{exterior_lower}, but it results with a suboptimal dependence of comparability constants on $R_1/R_2$.

The following corollary is an immediate   consequence of Theorem~\ref{exterior_lower} and Proposition~\ref{Exit_time100}.
\begin{cor}\label{Exterior_Approx}
 Let $\psi \in \WLSC{\la}{0}{\lC}\cap \WUSC{\ua}{0}{\uC}$ and $d>\ua$.  Let $D$ be  a $C^{1,1}$
at scale $R_1$ and such that $D^c\subset \overline{B_{R_2}}$.
For all $x,y\in \Rd$ and $t>0$ we have
$$ p_{D}(t, x, y)
    \approx  \p^x(\tau_{D}>t) \p^y(\tau_{D}>t) p(t, x,y),$$
with comparability constant $C=C(d,\psi,R_2/R_1)$.
\end{cor}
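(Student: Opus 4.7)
The plan is to derive Corollary~\ref{Exterior_Approx} essentially by direct substitution, combining the two preceding results. Theorem~\ref{exterior_lower} already provides two-sided bounds for $p_D(t,x,y)$ of the form
\[
 p_D(t,x,y) \;\asymp\; \left(\frac{V(\delta_D(x))}{\sqrt{t}\wedge V(R_1)}\wedge 1\right)\left(\frac{V(\delta_D(y))}{\sqrt{t}\wedge V(R_1)}\wedge 1\right)\,p(t,x,y),
\]
with comparability constants depending only on $d$, $\psi$ and the ratio $R_1/R_2$. Proposition~\ref{Exit_time100} provides the matching two-sided bounds
\[
 \p^x(\tau_D>t) \;\asymp\; \frac{V(\delta_D(x))}{\sqrt{t}\wedge V(R_1)}\wedge 1,
\]
again with comparability constants of the same type.

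The strategy is therefore: take the upper bound in Theorem~\ref{exterior_lower} and replace each factor $\left(V(\delta_D(\cdot))/(\sqrt{t}\wedge V(R_1))\wedge 1\right)$ by a constant multiple of $\p^{\cdot}(\tau_D>t)$ using the lower bound half of Proposition~\ref{Exit_time100}. This yields
\[
 p_D(t,x,y) \;\le\; C\,\p^x(\tau_D>t)\,\p^y(\tau_D>t)\,p(t,x,y).
\]
Conversely, starting from the lower bound in Theorem~\ref{exterior_lower} and applying the upper bound half of Proposition~\ref{Exit_time100}, we obtain
\[
 p_D(t,x,y) \;\ge\; c\,\p^x(\tau_D>t)\,\p^y(\tau_D>t)\,p(t,x,y).
\]
Both constants $c,C$ inherit their dependence solely on $d$, $\psi$ and $R_2/R_1$ from the two quoted results. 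The statement is meant to hold for all $x,y\in\Rd$ and $t>0$, which is exactly the range in which both Theorem~\ref{exterior_lower} and Proposition~\ref{Exit_time100} are stated, so there is no missing range to address.

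There is no real obstacle in this argument; the work has all been done upstream. The only thing to keep track of is the constant bookkeeping: since Theorem~\ref{exterior_lower} carries an explicit $(R_1/R_2)^{4+2d}$ in the lower bound and Proposition~\ref{Exit_time100} carries an $(R_1/R_2)^2$, the resulting comparability constant in the corollary will depend on $R_2/R_1$ (and on $d$ and $\psi$), which matches the statement $C=C(d,\psi,R_2/R_1)$. No additional argument regarding the transience assumption $d>\ua$, the $C^{1,1}$ scale $R_1$, or the inclusion $D^c\subset\overline{B_{R_2}}$ is needed beyond inheriting these hypotheses from the cited results.
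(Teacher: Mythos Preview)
Your proposal is correct and matches the paper's approach exactly: the paper states that the corollary is ``an immediate consequence of Theorem~\ref{exterior_lower} and Proposition~\ref{Exit_time100}'' and provides no further proof. Your substitution argument is precisely the intended one, and your bookkeeping of the dependence of constants on $d$, $\psi$, and $R_2/R_1$ is accurate.
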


The next  lemma is helpful to handle  halfspace-like $C^{1,1}$ sets.

{\begin{lem}\label{V_Product}Let $t_0>0$, $r_0=V^{-1}(\sqrt{t_0})$. Then, for $r>0$, $\lambda\geq 1$, $t>t_0$,
$$\frac{1}{\lambda+2}\(\frac{V(r)}{\sqrt{t_0}}\wedge 1\)\(\frac{V(r+\lambda r_0)}{\sqrt{t}}\wedge 1\)\leq\(\frac{V(r)}{\sqrt{t}}\wedge 1\)\leq \(\frac{V(r)}{\sqrt{t_0}}\wedge 1\)\(\frac{V(r+\lambda r_0)}{\sqrt{t}}\wedge 1\).$$
\end{lem}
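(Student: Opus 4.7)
The proof is a direct case analysis exploiting only two elementary properties of $V$: it is nondecreasing and subadditive (see \eqref{subad}), combined with the defining identity $V(r_0)=\sqrt{t_0}$. No scaling hypothesis on $\psi$ enters, which is why no constant beyond $\lambda+2$ appears.

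The preliminary step I would record first is the bound
\[
V(r+\lambda r_0)\le V(r)+(\lambda+1)\sqrt{t_0},\qquad \lambda\ge 1,\;r>0.
\]
This comes from $V(r+\lambda r_0)\le V(r)+V(\lambda r_0)$ together with $V(\lambda r_0)\le V(\lceil\lambda\rceil r_0)\le \lceil\lambda\rceil V(r_0)\le(\lambda+1)\sqrt{t_0}$, where the middle step iterates \eqref{subad}. I would also note the trivial monotonicity consequence $V(r+\lambda r_0)\ge V(r_0)=\sqrt{t_0}$, valid since $\lambda\ge 1$ forces $r+\lambda r_0\ge r_0$.

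For the upper inequality I would split on whether $V(r)\ge\sqrt{t_0}$. If yes, the factor $V(r)/\sqrt{t_0}\wedge 1$ equals $1$ and the claim reduces to $V(r)\le V(r+\lambda r_0)$, which is monotonicity. If $V(r)<\sqrt{t_0}$, a further split on whether $V(r+\lambda r_0)\ge\sqrt{t}$ covers the two subcases: in the first, the right-hand side equals $V(r)/\sqrt{t_0}$ and the bound $V(r)/\sqrt{t}\le V(r)/\sqrt{t_0}$ suffices; in the second, the inequality reduces to $\sqrt{t_0}\le V(r+\lambda r_0)$, which is the monotonicity observation above.

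For the lower inequality the main case is $V(r)<\sqrt{t}$ (otherwise $V(r)\ge\sqrt{t}>\sqrt{t_0}$ makes both min-factors on the left equal to $1$, and $1/(\lambda+2)\le 1$ closes the case). I would split on whether $V(r)\ge\sqrt{t_0}$. If yes, the preliminary bound rewrites as $V(r+\lambda r_0)\le(\lambda+2)V(r)$, so the factor $1/(\lambda+2)$ exactly absorbs the loss when comparing $V(r+\lambda r_0)/\sqrt{t}$ against $V(r)/\sqrt{t}$. If $V(r)<\sqrt{t_0}$, the preliminary bound gives $V(r+\lambda r_0)\le(\lambda+2)\sqrt{t_0}$, and the target inequality is equivalent to $V(r+\lambda r_0)\wedge\sqrt{t}\le(\lambda+2)\sqrt{t_0}$: when $V(r+\lambda r_0)\le\sqrt{t}$ this is immediate from the preliminary bound, and otherwise $\sqrt{t}<V(r+\lambda r_0)\le(\lambda+2)\sqrt{t_0}$ gives it directly.

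There is no substantive obstacle; the only point worth watching is that the constant $\lambda+2$ is tight, arising from the slightly wasteful estimate $\lceil\lambda\rceil\le\lambda+1$ used for $V(\lambda r_0)$, plus the one extra unit contributed by $V(r)\ge\sqrt{t_0}$ in the decisive subcase of the lower bound.
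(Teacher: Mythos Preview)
Your proof is correct and follows essentially the same approach as the paper's. The paper compresses the argument into the single two-sided estimate $V(r_0\vee r)\le V(r+\lambda r_0)\le(\lambda+2)V(r_0\vee r)$ and then splits on $r\le r_0$ versus $r>r_0$; your preliminary bound $V(r+\lambda r_0)\le V(r)+(\lambda+1)\sqrt{t_0}$ together with $V(r+\lambda r_0)\ge\sqrt{t_0}$ is equivalent (since $V(r_0\vee r)=\max(V(r),\sqrt{t_0})$), and your case split on $V(r)\gtrless\sqrt{t_0}$ is exactly the paper's split on $r\gtrless r_0$, just written out in full detail.
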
}
\begin{proof}
By subadditivity and monotonicity of $V$ we have  $$V(r_0\vee r)\leq V(r+\lambda r_0)\leq (\lambda+2)V(r_0\vee r).$$ Considering cases $r\le r_0$ and $r>r_0$, this observation easily leads to the conclusion.
\end{proof}
Here is our main result for halfspace-like $C^{1,1}$ sets. Recall that $\mathbb{H}_a$ is defined in Section \ref{sec:prel}.
\begin{thm}\label{halfspace-like}Let $\psi$ satisfy global  {\rm WLSC} and {\rm WUSC}, $D$ be $C^{1,1}$ at scale $R$ and $\mathbb{H}_a\subset D\subset \mathbb{H}_b$. Then for all
 $x,y\in \Rd$ and $t>0$, {
$$ p_{D}(t, x, y)
    \approx \p^x(\tau_D>t)\p^y(\tau_D>t)p(t,x,y)\qquad\text{and}\qquad \p^x(\tau_D>t)\approx\frac{V(\delta_D(x))}{\sqrt{t}}\wedge 1,$$}
and constants in the comparisons may be so chosen to  depend only on $d$, $\psi$, $a-b$ and $R$.
\end{thm}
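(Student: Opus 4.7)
First I would split the time interval into a short-time regime $t \le c^* V^2(R)$ and a long-time regime $t > c^* V^2(R)$, where $c^*$ denotes the constant from Corollary~\ref{lower_scaling}. In the short-time regime the survival probability estimate is exactly the content of Lemma~\ref{lem:spgenC11}, while the heat kernel factorization follows by combining the upper bound of Theorem~\ref{heatKernelComplGlobal} (which applies since $D$ satisfies the outer ball condition at scale $R$) with the lower bound of Corollary~\ref{lower_scaling} (which applies via the inner ball condition at scale $R$).

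In the long-time regime my plan is to reduce to the halfspace case by a shift argument in the spirit of Theorem~\ref{exterior_lower}. Set $\lambda=R+(a-b)$ and, for $x,y\in D$, define $x_0=x+\lambda e_d$ and $y_0=y+\lambda e_d$, so that $\delta_{\H_a}(x_0),\delta_{\H_a}(y_0)\ge R$ and both shifted points lie deep inside $\H_a\subset D$. With $t_0=c^*V^2(R)/2$, the Chapman--Kolmogorov decomposition
\begin{equation*}
  p_D(t,x,y)=\iint p_D(t_0,x,z)\,p_D(t-2t_0,z,w)\,p_D(t_0,w,y)\,dz\,dw,
\end{equation*}
restricted to $z\in B(x_0,\rho)$ and $w\in B(y_0,\rho)$ with small $\rho<R$, combined with Corollary~\ref{lower_scaling} applied to the outer factors, with the bound $p_D(t-2t_0,z,w)\ge p_{\H_a}(t-2t_0,z,w)\approx p(t-2t_0,z,w)$ coming from Corollary~\ref{hk_Halfspace_Approx} (since $z,w$ are deep in $\H_a$), and with Lemma~\ref{freeChange} together with translation invariance $p(t,x_0,y_0)=p(t,x,y)$, should yield
\begin{equation*}
  p_D(t,x,y)\ge c\left(\frac{V(\delta_D(x))}{\sqrt{t}}\wedge 1\right)\left(\frac{V(\delta_D(y))}{\sqrt{t}}\wedge 1\right)p(t,x,y).
\end{equation*}
An analogous splitting, with the outer factors bounded above by Theorem~\ref{heatKernelComplGlobal} and the middle factor by $p_D\le p_{\H_b}$ combined with Corollary~\ref{hk_Halfspace_Approx}, gives the matching upper bound. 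The survival probability estimate then follows by integrating the heat kernel factorization over $y$ and using that $\int(V(\delta_D(y))/\sqrt{t}\wedge 1)\,p(t,x,y)\,dy\approx 1$; the lower direction restricts the integral to $y\in\H_a$ with $\delta_{\H_a}(y)\ge V^{-1}(\sqrt{t})$, while the upper direction uses $\delta_D(y)\le\delta_{\H_b}(y)$ combined with Corollary~\ref{hk_Halfspace_Approx} applied to $\H_b$.

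The hard part will be the boundary-layer regime $\delta_D(x)\ll R\wedge(a-b)$ with $t\gg V^2(R)$, in which the elementary domain monotonicity bounds $p_{\H_a}\le p_D\le p_{\H_b}$ cease to be tight because $\delta_{\H_a}(x)$ and $\delta_{\H_b}(x)$ need not be comparable to $\delta_D(x)$. Here the Chapman--Kolmogorov splitting together with Lemma~\ref{V_Product} is the key device: the short-time factor carries the boundary behavior $V(\delta_D(x))/V(R)\wedge 1$ via Theorem~\ref{heatKernelComplGlobal} or Corollary~\ref{lower_scaling}, the subsequent evolution in the enclosing halfspace contributes the bulk decay $V(R)/\sqrt{t}$, and Lemma~\ref{V_Product} verifies that the product of these two factors is comparable to $V(\delta_D(x))/\sqrt{t}\wedge 1$ across the crossover $\delta_D(x)\sim R$, $\sqrt{t}\sim V(R)$. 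The complementary regime where $\delta_D(x)$ is of order $a-b$ or larger is handled directly by the domain monotonicity bounds combined with subadditivity of $V$.
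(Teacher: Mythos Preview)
Your overall strategy matches the paper's: split into short and long time, handle short time by Lemma~\ref{lem:spgenC11}, Theorem~\ref{heatKernelComplGlobal} and Corollary~\ref{lower_scaling}, and for long time use a vertical shift $x\mapsto x_0$ together with Chapman--Kolmogorov, domain monotonicity $p_{\H_a}\le p_D\le p_{\H_b}$, Corollary~\ref{hk_Halfspace_Approx}, Lemma~\ref{freeChange} and Lemma~\ref{V_Product}. That is exactly the paper's route.

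There is, however, a genuine error in your second paragraph: the claim $p_{\H_a}(t-2t_0,z,w)\approx p(t-2t_0,z,w)$ for $z,w$ at depth $\approx R$ is false once $t\gg V^2(R)$. Corollary~\ref{hk_Halfspace_Approx} gives instead $p_{\H_a}(t-2t_0,z,w)\approx(V(R)/\sqrt{t}\wedge1)^2\,p(t-2t_0,z,w)$, and that missing factor is precisely what Lemma~\ref{V_Product} is meant to absorb. Your last paragraph recognizes this, but it is not a separate ``boundary-layer'' case: it is the generic long-time situation, so the argument in the second paragraph does not stand on its own.

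A second gap is the upper bound in the long-time regime. Restricting the Chapman--Kolmogorov integral to balls around $x_0,y_0$ can only give lower bounds; for the upper bound you must integrate over all of $D$, and your sketch (``outer factors by Theorem~\ref{heatKernelComplGlobal}, middle by $p_D\le p_{\H_b}$'') does not explain how the mixed products $(V(\delta_D(z))/\sqrt{t_0}\wedge1)(V(\delta_{\H_b}(z))/\sqrt{t-2t_0}\wedge1)$ collapse. The paper resolves both directions at once with a cleaner device: it proves the two-sided pointwise comparison
\[
p_D(t_0/2,x,z)\approx\Big(\tfrac{V(\delta_D(x))}{\sqrt{t_0}}\wedge1\Big)\,p_D(t_0/2,x_0,z)\qquad\text{for all }z\in D,
\]
using that $\delta_D(x_0)\gtrsim r_0$ makes the $x_0$-factor trivial in both Theorem~\ref{heatKernelComplGlobal} and Corollary~\ref{lower_scaling}, together with Lemma~\ref{freeChange}. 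Substituting this (and its $y$-analogue) into the full Chapman--Kolmogorov integral gives $p_D(t,x,y)\approx l(x,y)\,p_D(t,x_0,y_0)$ directly, after which the halfspace sandwich and Lemma~\ref{V_Product} finish the job. I recommend you adopt this comparison; it replaces both your ball-restriction and your separate upper-bound argument.
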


\begin{proof}
Without loss of generality we may and do assume that $a>b=0$.
Let $x,y\in D$.
Due to Corollary \ref{lower_scaling}  and Theorem  \ref{heatKernelComplGlobal} it remains to prove   the  comparisons  for $t>t_0=c^*V^2(R)$,  where $c^*$ is the constant from  Corollary \ref{lower_scaling}.
Our arguments below are similar to those proving \cite[Theorem 1.2]{MR2776619}, where
where the result is proved for the isotropic stable L\'evy processes.
Let $r_0=V^{-1}(\sqrt{t_0})$, $\lambda=1+a/r_0$, $x_0=x+\lambda r_0 e_d$ and $y_0=y+\lambda r_0 e_d$, where $e_d=(0,\ldots,0,1)$.
By  Lemma \ref{freeChange} and \cite[Corollary 24]{2014-KB-TG-MR-jfa} the following comparison depends only on $d$, $\psi$ and $\lambda$:
$$p(t_0/2,x,z)\approx p(t_0/2,x_0,z), \quad x,z\in \Rd.$$
Since $\delta_D(x_0)\geq \delta_{\mathbb{H}_a}(x_0)\geq r_0$, we have $V(\delta_D(x_0))\geq\sqrt{t_0}$.
By Corollary \ref{lower_scaling} and  Theorem  \ref{heatKernelComplGlobal},  \begin{equation}\label{equiv}p_D(t_0/2,x,z)\approx \(1\wedge\frac{V(\delta_D(x))}{\sqrt{t_0}}\)p_D(t_0/2,x_0,z), \quad x,z\in \Rd,\end{equation}
where the comparability constant depends on dimension $\psi$, $a$ and $R$.
We denote $l(x,y)=\(1\wedge\frac{V(\delta_D(x))}{\sqrt{t_0}}\)\(1\wedge\frac{V(\delta_D(x))}{\sqrt{t_0}}\)$. By (\ref{equiv}) and the semigroup property,
\begin{eqnarray}p_D(t,x,y)&\approx&
l(x,y)\int_D\int_Dp_D(t_0/2,x_0,z)p_D(t-t_0,z,w)p_D(t_0/2,w,y_0)dzdw\nonumber\\
&=&l(x,y)p_D(t,x_0,y_0).\label{hsl1}
\end{eqnarray}
We have  $$\delta_D(x)+r_0\leq \delta_{\H_a}(x_0)\leq\delta_{\H_0}(x_0)\leq \delta_D(x)+2\lambda r_0,$$
hence, by Lemma \ref{V_Product},
$$\(1\wedge\frac{V(\delta_{\H_0}(x_0))}{\sqrt{t}}\)\(1\wedge\frac{V(\delta_D(x))}{\sqrt{t_0}}\)\leq (2+2\lambda)  \(1\wedge\frac{V(\delta_{D}(x))}{\sqrt{t}}\)$$
and
\begin{equation}\label{hsl2}\(1\wedge\frac{V(\delta_{\H_a}(x_0))}{\sqrt{t}}\)\(1\wedge\frac{V(\delta_D(x))}{\sqrt{t_0}}\)\geq   \(1\wedge\frac{V(\delta_{D}(x))}{\sqrt{t}}\).\end{equation}
The last two estimates also hold if $x_0, x$ are replaced by $y_0, y$.
We note that
$$p_{\mathbb{H}_a}(t,x_0,y_0)\leq p_D(t,x_0,y_0)\leq p_{\mathbb{H}_0}(t,x_0,y_0),$$ and $ p(t,x_0-y_0)= p(t,x-y)$. Also, $\delta_D(x_0)\approx \delta_{\H_a}(x_0)\approx\delta_{\H_0}(x_0)$ because $\delta_{\H_0}(x_0)\le \delta_{\H_0}(x_0)=a+\delta_{\H_a}(x_0)\le \lambda \delta_{\H_a}(x_0)$.
 From Corollary \ref{hk_Halfspace_Approx}, subadditivity of $V$,  and \eqref{hsl1} and \eqref{hsl2} (along with their variants for $y_0$ and $y$), we obtain
$$p_D(t,x,y)\approx \(\frac{V(\delta_D(x))}{\sqrt{t}}\wedge 1
\)\(\frac{V(\delta_D(y))}{\sqrt{t}}\wedge 1\)p(t, x, y).$$
This gives a sharp approximate factorization of $p_D$.
One consequence is that
$$\p^x(\tau_D>t)\leq c  \(\frac{V(\delta_D(x))}{\sqrt{t}}\wedge 1
\)\int_D p(t,x,y)dy\leq c  \(\frac{V(\delta_D(x))}{\sqrt{t}}\wedge 1
\).$$
By {Lemma \ref{lem:spgenC11}}
 a matching lower bound
holds for $0<t\leq t_0$.
If $t>t_0$, then by (\ref{equiv}) and the semigroup property,
$$p_D(t,x,y)\approx  \(\frac{V(\delta_D(x))}{\sqrt{t_0}}\wedge 1
\) p_D(t,x_0,y),$$
cf. the proof \eqref{hsl1}.
We integrate the comparison against $y$ and use \cite[Theorem 3.1]{KMR}, to get
\begin{align*}
\p^{x}(\tau_D>t)&\approx \(\frac{V(\delta_D(x))}{\sqrt{t_0}}\wedge 1
\)\p^{x_0}(\tau_D>t)\geq \(\frac{V(\delta_D(x))}{\sqrt{t_0}}\wedge 1
\)\p^{x_0}(\tau_{H_a}>t)\\
&\approx \(\frac{V(\delta_D(x))}{\sqrt{t_0}}\wedge 1
\)\(1\wedge\frac{V(\delta_{\H_a}(x_0))}{\sqrt{t}}\).
\end{align*}
We end the proof by using \eqref{hsl2}.

\end{proof}

\section{Examples}\label{sec:examples}

In a recent work \cite{2013arXiv1303.6449C}, Chen, Kim and Song provide estimates of Dirichlet heat kernels for a class of pure-jump Markov processes with intensity of jumps comparable to that
of a complete subordinate Brownian motion
with scaling.
In fact the assumptions of \cite{2013arXiv1303.6449C} imply the (scale invariant) boundary Harnack inequality, which leads to
the ``Lipschitz setting'' mentioned in the Section~\ref{sec:mot}, and allows to handle the so-called $\kappa$-fat sets (see \cite{MR2722789} for the case of the isotropic stable L\'evy processes).
In this sense \cite{2013arXiv1303.6449C} is a culmination of the line of research presented in \cite{MR2677618, MR2923420, MR2981852, 2013arXiv1303.6449C}.

Therefore in the examples below we focus on
processes which are not covered by \cite{2013arXiv1303.6449C}.
Namely, in  the first three examples the (scale invariant)  boundary Harnack inequality is not known or simply fails for some $C^{1,1}$  sets, but our method provides satisfactory estimates.
Our last two examples are more straightforward, and the reader may find others in \cite{2014-KB-TG-MR-jfa} and  \cite{BGR2}.

\begin{exmp}\label{ex:tru}
Let $\nu(x)={\log^\beta (1+|x|^{-1})}|x|^{-d-\alpha}\ind_{B_1}(x)$, $\alpha\in(0,2)$, $\beta\geq 0$.
It is known that the scale invariant boundary Harnack inequality fails for some $C^{1,1}$ sets for the corresponding
{\it truncated} L\'evy process
\cite{MR2391246}, but the characteristic exponent $\psi$  satisfies the desired scaling conditions. Indeed, it is easy to verify that $h(r)\approx r^{-2}\wedge \[\log^\beta (2+r^{-1})r^{-\alpha}\]$. Then, by
Lemma~\ref{ch1V}, $\psi(x)\approx h(|x|^{-1})\approx |x|^2\wedge \[\log^\beta (2+|x|)|x|^\alpha\]$ and $\psi\in  \WUSC{\alpha+\varepsilon}{1}{\uC_{\varepsilon}} \cap \WLSC{\alpha}{1}{\lC}$, if $\alpha<\varepsilon+\alpha<2$. Our results apply to this case,
e.g.
for $0<r<1/2$, $0<t<r^\alpha/|\log r|^\beta$ and  $x,y\in B_r$, we have
\begin{equation*}
p_{B_r}(t,x,y)\approx  \(1\wedge\frac{(r-|x|)^{\alpha}}{t\log^{\beta}\frac{1}{r-|x|}}\)^{1/2} \(1\wedge\frac{(r-|y|)^{\alpha}}{t\log^{\beta}\frac{1}{r-|y|}}\)^{1/2}\[\(t\log^\beta\frac{1}{t}\)^{-d/\alpha}\wedge\frac{t\log^{\beta}\frac{1}{ |x-y|}}{|x-y|^{d+\alpha}}\],
\end{equation*}
and the comparability constant depends only on $d$ and $\psi$.
\end{exmp}
\begin{exmp}
Let $T$ be a subordinator with L\'{e}vy density $\mu(r)=r^{-1-\alpha/2}\ind_{(0,1)}(r)$, $\alpha\in(0,2)$, and $X$  be a subordinate Brownian motion governed by $T$. Then $\psi(x)\approx |x|^2\wedge |x|^{\alpha}$ and $\nu(x)\approx e^{-|x|^2/4}|x|^{-2}$, $|x|\geq 1$. This $\psi$ satisfies WLSC and WUSC with $\la=\ua=\alpha$, but
the scale invariant boundary Harnack inequality does not hold (see \cite[Example 5.14]{2012arXiv1207.3160B}).
\end{exmp}

\begin{exmp}
Let  $\phi$  be a complete Bernstein function \cite{MR2978140} and  $\phi(|\cdot|^2)\in \textrm{WUSC}(\ua,0,\uC) \cap \textrm{WLSC}(\la,0,\lC)$. If $\psi(x)= |x|^2+ \phi(|x|^2)$ then, by  \cite[Proposition 4.5 and Theorem 4.4]{KMR} the renewal function $V$ of the ascending ladder-height process is a Bernstein function and   $\psi_1(x)=V(|x|^2)\in \WUSC{\ua}{1}{{\uC}_1} \cap \WLSC{\la}{1}{{\lC}_1}$ is the characteristic exponent of a subordinate Brownian motion. For this process it is not clear if the boundary Harnack inequality holds.
In particular, it is not clear how to construct a complete subordinate Brownian motion with comparable L\'evy measure.
Nevertheless, our approach applies because of scaling and isotropy.
\end{exmp}

In the next two examples we assume
global scaling conditions and we focus on estimates for exterior $C^{1,1}$  sets for the full range of time and space.
To the best of our knowledge such estimates were known only for the isotropic stable L\'evy process. Even the estimate from the next example seems to be new.
\begin{exmp}
Let $0<\alpha_1\leq \alpha_2<2$, $d>\alpha_2$ and $\psi(x)=|x|^{\alpha_1}+|x|^{\alpha_2}$. Then $\psi\in \WLSC{\alpha_1}{0}{1}\cap \WUSC{\alpha_2}{0}{1}$. In particular, by Lemma~\ref{densityApprox},
$$p_t(x)\approx (t^{-1/\alpha_1}+t^{-1/\alpha_2})^d\wedge\frac{t(|x|^{-\alpha_1}+|x|^{-\alpha_2})}{|x|^d},\qquad t>0, \ x \in \Rd,$$
and by Corollary \ref{Exterior_Approx},
$$p_{ \overline{B}_{r}^c}(t,x,y)\approx \(1\wedge\frac{(|x|-r)^{\alpha_1}\wedge(|x|-r)^{\alpha_2}}{t\wedge r^{\alpha_1}\wedge r^{\alpha_2}}\)^{1/2}\(1\wedge\frac{(|y|-r)^{\alpha_1}\wedge(|y|-r)^{\alpha_2}}{t\wedge r^{\alpha_1}\wedge r^{\alpha_2}}\)^{1/2}p(t,x,y),$$
where $r>0$, $t>0$, $x,y\in B_{r}^c$
and the comparability constant depends only on $d$, $\alpha_1$ and $\alpha_2$.
\end{exmp}
\begin{exmp}
If $f\in \WLSC{\la}{0}{\lC}\cap \WUSC{\ua}{0}{\uC}$ is nonincreasing and $\nu(|x|)=f(1/|x|)/|x|^d$, then $\psi$ has both global
scalings
\cite[Proposition 28]{2014-KB-TG-MR-jfa}. E.g. we may let $\alpha_1,\alpha_2\in (0,2)$ and
\begin{align*}
f(r)&=(r^{-1}\log(r+1/r))^{\alpha_1},\quad \mbox{or} \\
f(r)&=(r\log(r+1/r))^{-\alpha_1},\quad \mbox{or}\\
f(r)&=\left\{\begin{array}{ll}
r^{-\alpha_1} & \text{ if }0<r\leq 1,\\
r^{-\alpha_2}/2& \text{ if } r>1.
\end{array}
\right.
\end{align*}
In particular we do not need continuity of $\nu$, which was assumed in \cite{2013arXiv1303.6449C}.
The results from Section~\ref{sec:ud} give estimates which are uniform in the whole of time and space for exterior $C^{1,1}$ sets and halfspace-like sets.
For clarity, the global scaling conditions imply the scale boundary Harnack inequality (\cite{BHP_KSV2012} and \cite[Corollary 27]{2014-KB-TG-MR-jfa}), so short time estimates would follow from  \cite{2013arXiv1303.6449C} and \cite{BGR2},
if we also assumed continuity of the L\'evy density.

\end{exmp}

We finally suggest a possible generalization of our estimates which
relaxes
the assumption of monotonicity of the L\'evy density.

\begin{rem}\label{rem:gLc}
We can work with more general  isotropic  pure-jump L\'{e}vy processes.
Assume that
the L\'{e}vy measure of $X$ is absolutely continuous and its density function satisfies $\nu(x)=\nu_0(|x|)\approx f(1/|x|)/|x|^d$, where $f$ is nonincreasing and satisfies WLSC($\la,\theta,\lC$) and WUSC ($\ua,\theta,\uC$).
Then by \cite[Proposition 28]{2014-KB-TG-MR-jfa}, $\psi(x)\approx f(x)$ for $|x|\geq \theta$, hence $\psi$ satisfies (local) WLSC and WUSC.   By  \cite{MR2524930} for $\theta>0$ and \cite{MR2357678} for $\theta=0$ we get estimates for the heat kernel. This implies that $x\to p_t(x)$ is radial and almost decreasing locally in time and space (for all $t>0$ and $x\in\Rd$, if $\theta=0$). Moreover,  the scale invariant Harnack inequality { holds \cite{MR2524930, MR2357678}}. This allows to repeat the arguments in Section~\ref{sec:uni}, Section~\ref{sec:ud} and Section~\ref{sec:examples}, to obtain analogous estimates for bounded $C^{1,1}$ open sets if $\theta>0$ and for all considered $C^{1,1}$ sets if $\theta=0$.

\end{rem}

 %\bibliographystyle{abbrv}
 %\bibliography{dhkuL}

\end{document}